\documentclass[a4paper, 11pt]{article}
\usepackage{graphicx}
\usepackage{amssymb}
\usepackage{epstopdf}
\usepackage{fancyhdr}
\usepackage{amsmath}
\usepackage{amsthm}
\usepackage{hyperref}
\usepackage{mathdesign}
\usepackage{color}
\usepackage{soul}
\usepackage{wrapfig}

\usepackage[top=2.2 cm,bottom=2.2 cm,left=2.5 cm, right=2.5 cm]{geometry}

\newcommand{\mL}{\mathcal{L}}
\newcommand{\T}{\mathcal{T}}

\newcommand{\cE}{{\cal E}}
\newcommand{\cI}{{\cal I}}

\newcommand{\cT}{{\cal T}}
\newcommand{\cW}{{\cal W}}

\newcommand{\be}{{\bf e}}\newcommand{\bff}{{\bf f}}

\newcommand{\bu}{{\bf u}}

\newcommand{\bA}{{\bf A}}

\newcommand{\bT}{{\bf T}}

\newtheorem{theorem}{Theorem}[section]
\newtheorem{lemma}[theorem]{Lemma}

\newtheorem{definition}[theorem]{Definition}

\newtheorem{remark}[theorem]{Remark}

\newtheorem{assumption}{Assumption}

\newcommand{\bbE}{\mathbb{E}}

\newcommand{\ve}{\varepsilon}

\def\XXint#1#2#3{{\setbox0=\hbox{$#1{#2#3}{\int}$ }
\vcenter{\hbox{$#2#3$ }}\kern-.58\wd0}}

\def\XXint#1#2#3{{\setbox0=\hbox{$#1{#2#3}{\int}$}
\vcenter{\hbox{$#2#3$}}\kern-.87\wd0}}

\def\XXiint#1#2#3{{\setbox0=\hbox{$#1{#2#3}{\int}$}
\vcenter{\hbox{$#2#3$}}\kern-1.05\wd0}}

\def\XXintt#1#2#3{{\setbox0=\hbox{$#1{#2#3}{\int}$}
\vcenter{\hbox{$#2#3$}}\kern-.72\wd0}}

\def\Xinttt#1{\mathchoice
{\XXinttt\displaystyle\textstyle{#1}}%
{\XXinttt\textstyle\scriptstyle{#1}}%
{\XXinttt\scriptstyle\scriptscriptstyle{#1}}%
{\XXinttt\scriptscriptstyle\scriptscriptstyle{#1}}%
\!\int}
\def\XXinttt#1#2#3{{\setbox0=\hbox{$#1{#2#3}{\int}$}
\vcenter{\hbox{$#2#3$}}\kern-.52\wd0}}

\def\XXintttr#1#2#3{{\setbox0=\hbox{$#1{#2#3}{\int}$}
\vcenter{\hbox{$#2#3$}}\kern-.6\wd0}}

\def\XXintttt#1#2#3{{\setbox0=\hbox{$#1{#2#3}{\int}$}
\vcenter{\hbox{$#2#3$}}\kern-.78\wd0}}

\def\sqr#1#2{{\vcenter{\vbox{\hrule height.#2pt\hbox{\vrule width.#2pt height#1pt \kern#1pt\vrule width.#2pt}\hrule height.#2pt}}}}

\def\ddashinttt{\Xinttt-}

\title{Locally periodic homogenization of multiscale models for plant tissue  biomechanics}

\author{Mariya Ptashnyk\medskip\\
\small  Maxwell Institute for Mathematical Sciences, Department of Mathematics, School of\\  \small  Mathematical and Computer Sciences, Heriot-Watt University, Edinburgh, Scotland, UK}
\date{}

\begin{document}

\maketitle

\begin{abstract}
In this work, the derivation and multiscale analysis of a mathematical model for plant tissue biomechanics is considered. The microscopic model consists of a coupled systems of  equations of linear elasticity, describing mechanical deformation, and reaction-diffusion and ordinary differential equations modelling the dynamics of load-bearing cross-links in cell walls and middle lamella, connecting individual cells in a plant tissue.  It takes into account a two-way coupling between mechanical deformation and  chemical processes in  cell walls and middle lamella, where elastic properties  depend on the density of cross-links and reactions terms, defining the stretching and breakage of the cross-links, depend on the deformation gradient. The existence and uniqueness results for  the strongly coupled problem are shown by applying the  Banach fixed point theorem  and using the Alikakos iteration algorithm in the derivation of a priori estimates and the contraction property. The nonlinear dependence of solutions of the ordinary differential equation on the displacement gradient and of the boundary conditions in reaction-diffusion problem  on the displacement induces novel approaches in the derivation of a priori estimates. Using homogenization techniques of locally periodic (l-p) two-scale convergence, a macroscopic model for plant tissue biomechanics is derived, representing  the first result in the multiscale analysis of a biomechanical model for  tissues with non-periodic fibrous microstructure. For multiscale analysis and derivation of the macroscopic model, the non-periodic microstructure of plant cell walls, characterised by rotated planes of parallel aligned microfibrils,  is approximated by the corresponding locally periodic microstructure.  The   two-way coupling between equations of linear elasticity and reaction-diffusion and ordinary differential  equations requires the proof of the strong l-p two-scale convergence for the deformation gradient and for the density of stretched cross-links, in order to pass to the limit in the nonlinear  functions.

{\small \noindent{\it Keywords:} plant biomechanics,  multiscale modelling,  homogenization}
\end{abstract}

\section{Introduction}
The growth and development of plants involve many interconnected processes, that occur at different
spatial and temporal scales, from cell walls, with their highly dynamic biochemical processes and
complex microstructure, to different types of tissues and organs. Understanding  the influence of microscopic structure and  molecular interactions on the macroscopic properties of plant tissues is important, especially in the context of climate change and impact of environmental conditions on plant growth. The mechanical properties and the growth of plant tissues are  strongly determined by the structure of the cell walls and the adhesion between cells. The cell walls  consist mainly of oriented cellulose microfibrils (MF), pectin, hemicellulose, structural proteins and water, whereas the cross-linked pectin network is the main composite of the middle lamella, joining individual cells  together.  The orientation of microfibrils   and   the distribution of load-bearing calcium-pectin cross-links    strongly affect  the mechanical properties of plant cell walls~\cite{P2011,P2010,Wolf}. There are a number of models describing plant cell walls, each focusing on different aspects of their structure. Mathematical models of the  cellulose-hemicellulose network were proposed in \cite{DBJ,PF}.  The account of the microstructure of a cell wall has been addressed by considering the anisotropic yield stresses or by distinguishing between the free energies related to the elasticity of (i) macromolecules and hydrogen bonds or (ii) the matrix and microfibrils \cite{Dumais,VC}. The influence of the microfibril orientation and the external torque on the expansion process has been considered in~\cite{DJ}. The effects of changes in the chemical configuration of  pectins (methylesterified and demethylesterified) and  the calcium concentration on the viscous behaviour of the cell wall in a pollen tube has been analysed  in~\cite{KZG,RHD}. The derivation, multiscale analysis, and simulation of  models for plant cell wall biomechanics, accounting for   the two-way coupling between the mechanical deformation and the chemistry of calcium-pectin cross-links, were presented in~\cite{PS_1_2016, PS_2_2016, PS_2017}.

In this work, the results of~\cite{PS_1_2016} are extended by considering a more realistic dynamics of calcium-pectin cross-links and  distinguishing between  newly formed and stretched  cross-links, as well as by considering the non-periodic distribution of microfibrils in cell walls, characterised by rotated planes of parallel-aligned microfibrils. The non-periodic microstructure considered here has been observed experimentally in plant root cells~\cite{Anderson}, presumably as a result of the reorientation of microfibrils in the principal direction of growth during elongation. This reorientation may be one of the mechanisms underlying plant cell maturation and growth cessation.  We model a part of  plant tissue as a collection of cell walls joined by the middle lamella, and assume that cell wall matrix  and middle lamella are isotropic and   linearly elastic, whereas microfibrils are modelled as an anisotropic,  linearly elastic material.  The interplay between mechanics and cross-link dynamics comes in by assuming that the elastic  properties of the wall matrix and middle lamella depend on the density of the cross-links and that stresses within the cell wall can stretch and ultimately  break calcium-pectin cross-links.   The deformation-dependent opening of calcium channels in the cell plasma membrane is addressed in the flux boundary conditions for calcium ions. The dynamics of newly formed cross-links is described by reaction-diffusion equations, whereas the stretched cross-links are modelled by an ordinary-differential equation (ODE), assuming that they are not diffusing in the cell wall matrix,  with a local dependence of the reaction terms on the deformation gradient, which improves the modelling approach considered in~\cite{PS_1_2016}. The existence of a weak solution of the microscopic equations is shown using a classical approach and applying the Banach fixed-point theorem. However, due to quadratic nonlinearities in the reaction terms, the proof of a priori estimates and contraction inequality is not standard and relies on delicate estimates for the $L^\infty$-norm of  solutions of the reaction-diffusion and ODE system in terms of the $L^2$-norm of the displacement gradient, by applying  the Alikakos iteration technique~\cite{Alikakos}. To pass to the limit in the nonlinear function of the displacement in the boundary condition for calcium concentration, we prove the equicontinuity property with respect of the time variable for the displacement gradient.  Using homogenization techniques, i.e.~the locally periodic (l-p) two-scale convergence and l-p unfolding method~\cite{Ptashnyk_2013, Ptashnyk_2015},  we derive the macroscopic model for  plant tissue biomechanics. The main mathematical difficulty in the derivation of the macroscopic problem arises from the strong coupling between the equations of elasticity and the system of reaction-diffusion and ordinary differential equations. In order to pass to the limit in the nonlinear reaction terms, we prove the strong l-p two-scale convergence for the displacement gradient and for the density of stretched cross-links.

 There are  several mathematical results on  homogenization in locally periodic and  fibrous media.  The homogenization of a heat-conductivity problem defined in locally periodic and  non-periodic  domains consisting of spherical  balls   was studied in \cite{Briane3} using the Murat-Tartar $H-$convergence method~\cite{Murat}.
The notion of $\theta-2$ convergence, motivated by  the homogenization of  elliptic problems in a domain  with a microstructure of non-periodically  distributed  spherical balls, was introduced in~\cite{Alexandre}.  The Young measure was used in~\cite{Mascarenhas} to extend  the concept of periodic two-scale convergence~\cite{Allaire, Nguetseng}, and to define the so-called  scale convergence,   motivated by the derivation of the $\Gamma$-limit for a sequence of nonlinear energy functionals involving non-periodic oscillations. In~\cite{Mascarenhas},
as an example of non-periodic oscillations, a domain with perforations given by a transformation of the centres of balls was considered. The asymptotic expansion method was used  in~\cite{Mikelic} to derive macroscopic equations for a filtration problem  through a locally periodic fibrous medium and in~\cite{AdrianTycho2}  to derive a macroscopic model  for convection-diffusion equations defined  in  domains with  locally periodic perforations, i.e.~domains consisting of  periodic cells with  smoothly changing perforations.
Macroscopic models for elliptic equations defined in non-periodic fibrous materials, characterised by gradually rotated  planes of parallel-aligned fibres, were presented in~\cite{Briane2} and derived in~\cite{Briane1}.    By applying the $H-$convergence method for a locally periodic approximation  of the non-periodic microstructure the effective homogenized matrix was derived.
In this work, we consider a similar approximation of the non-periodic microstructure by a locally periodic one
 for a strongly coupled system of linear elasticity, reaction–diffusion, and ordinary differential equations. Using l-p two-scale convergence and l-p unfolding methods, we derive the macroscopic model and obtain new results for the multiscale analysis of nonlinear PDEs in domains with non-periodic microstructure.

The paper is organised as follows. In Section~\ref{micro_model} we derive a microscopic model for plant tissue biomechanics, which is then analysed in Section~\ref{analysis}. Derivation of the macroscopic problem is presented in Section~\ref{macro_model}, followed by  concluding remarks in Section~\ref{conclusion}.

\section{Derivation of microscopic  model for plant  tissue biomechanics}\label{micro_model}
In the microscopic model,  we represent a part of  plant tissue,  a bounded Lipschitz domain $\Omega \subset (x_{1, {\rm min}}, x_{1, {\rm max}}) \times (x_{2, {\rm min}}, x_{2, {\rm max}}) \times (x_{3, {\rm min}}, x_{3, {\rm max}})$, for $x_{j, {\rm min}}, x_{j, {\rm max}} \in \mathbb R$, $j=1,2,3$,  as a collection of cell walls joined together by the middle lamella, with
 the effect of the cell interior modelled through the internal turgor pressure acting on the cell membranes.  We distinguish between mechanical properties of cell wall matrix, middle lamella, and cellulose microfibrils, and as one of the main chemical processes influencing mechanical properties of plant tissues we consider the calcium-pectin chemistry.  Pectin is deposited into the cell walls in a highly methylestrified state and is modified by the wall enzyme pectin-methylesterase (PME), which removes methyl groups \cite{WG}.   The demethylesterified pectin  interacts with calcium ions to produce load-bearing cross-links, which then reduce the cell wall expansion~\cite{WHH}. It is   assumed that mechanical forces can stretch and eventually break the cross-links.

For mechanical deformation we consider the momentum balance equations
$$
\text{div}\,\bT  =0 \qquad \text{in}\ \Omega,
$$
and, assuming the  constitutive law of linear elasticity, see e.g.~\cite{Ciarlet},  we have
$$\bT= \big(\bbE_{M_w}(b)\chi_{\Omega_{WM}} + \bbE_{M_l}(b)\chi_{\Omega_{ML}}  + \bbE_F \chi_{\Omega_F}\big)\be(\bu),
$$
where  $\bbE_{M_w}(b)$, $\bbE_{M_l}(b)$, and $\bbE_F$ are elasticity tensors for cell wall matrix, the middle lamella,  and microfibrils, respectively,  $\bu$ denotes the displacement,  $\be(\bu)=(\nabla\bu+\nabla\bu^{\rm T})/2$, and   $\chi_{\Omega_{WM}}$, $\chi_{\Omega_{ML}}$, and $\chi_{\Omega_F}$ are the characteristic functions of  domains occupied by wall matrix $\Omega_{WM}$,  middle lamella $\Omega_{ML}$, and  microfibrils $\Omega_F$, respectively.
We assume that the elastic properties of the cell wall matrix and the middle lamella depend on the density of calcium–pectin cross-links, while the microfibrils are modelled as a transversely isotropic material~\cite{DMKM}, see Appendix for details.  The  forces acting on the plant tissue include the turgor pressure~$P$  within the cells and  traction forces~$\bff$ exerted by neighbouring cells or external  forces.

In modelling the dynamics of  calcium-pectin cross-links,   we  account for the random movement of PME $p_E$,   methylesterified $p_e$ and  demethylesterified  $p_d$ pectin, calcium ions $c$, and newly formed calcium-pectin cross-links $b$, \cite{HSP}, while assuming  that stretched cross-links $b_s$ do not diffuse.
 For the chemical interactions we consider changes in the methylesterified properties of pectin and formation,  decay, stretching  and  breakage of  calcium-pectin cross-links.
We assume that the binding of PME  to and  dissociation  from  a pectin acid group are very fast and it is not used up during the demethylesterification process. PME and methylesterified pectin are  produced by  cells and  transported into the cell walls through the boundary  $\Gamma_\cI \subset \partial \Omega$, representing the cell plasma membranes, and  can also leave the cell walls through $\Gamma_\cI$ to degrade~\cite{WG}.  To account for mechanisms  controlling the amount of pectin and PME in the cell walls, we assume that the  inflow of new methylestrified pectin  decreases with an increasing   amount of methylestrified and demethylesterified  pectins, and calcium-pectin cross-links  in the  walls and  the  inflow  of PME  into  the cell wall  depends on the amount of methylestrified pectin within the  wall  and high density of the demethylesterified pectin  and calcium-pectin cross-links inhibits  the inflow of new PME.
Due to the high calcium concentration in plant cell walls,  we assume saturation kinetics for formation of calcium-pectin cross-links. Experimental observations indicate that, during  growth, cellulose microfibrils  rotate across the  cell wall thickness, resulting in a non-periodic microstructure of rotated planes of parallel-aligned microfibrils~\cite{Anderson}.

To define the microscopic structure of the cell walls we consider   $Y= (-\frac 12, \frac 12)^3$ and  $Y^\ast = Y \setminus \overline Y_0$, where   $Y_0 = \{ y \in \overline Y: \, |\hat y|< a\}$ with $a<1/2$,  $\hat y = (y_2, y_3)$, $\Gamma = \partial Y_0$,  and characteristic function $\vartheta(y) = \chi_{Y_0}(y)$,
extended $Y$-periodically to $\mathbb R^3$.

\begin{wrapfigure}[18]{r}{5.4cm}
\vspace{-0.9 cm}
\vspace{-0.1cm}\includegraphics[width=0.325\textwidth]{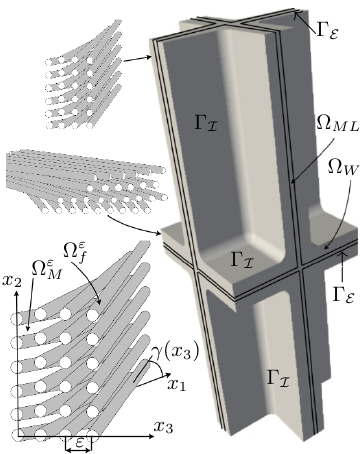}
\caption{\small A sketch of geometry and non-periodic microstructure of a part of plant root tissue, based on scanned images of plant root cells in  elongation zone~\cite{PS_2_2016}.}\label{fig_1}
\end{wrapfigure}
For $\xi\in \mathbb Z^3$ define   $ x_{\xi}^\ve=\ve R_{x_\xi^\ve}  \xi$, with $R_{x}:= R(\gamma(x_3))$,  where  $\gamma \in C^2(\mathbb R)$ with  $-\pi/2\leq \gamma(x) \leq \pi/2$ for  $x\in \mathbb R$, and rotation matrix $R(\alpha)$     around the  $x_3$-axis  and  rotation angle~$\alpha$ with the $x_1$-axis
$$
R(\alpha)=\begin{pmatrix}
 \cos(\alpha) &-\sin(\alpha)& 0\\
\sin(\alpha) &\phantom{-}\cos(\alpha)& 0\\
 0 & 0 & 1
\end{pmatrix}.
$$
Then the  characteristic function of rotated layers of parallel-aligned microfibrils is given by
\begin{equation}\label{elastnonper}
\chi_{\Omega_f^\ve} (x)
=\chi_{\Omega_W}(x)\sum\limits_{\xi\in \Xi^\ve}  \vartheta_\ve\big(R_{x^\ve_\xi}^{-1}( x- x_{\xi}^\ve)\big),
\end{equation}
where  $ \vartheta_\ve(x)= \vartheta( x/\varepsilon)$, with $\ve$ denoting  the ration between the diameter of the microfibrils and
 the cell wall thickness, $\Xi^\ve = \{ \xi \in \mathbb Z^3  : \, \ve R_{x^\ve_\xi}(\overline Y + \xi) \subset \Omega_W \}$,  and $\Omega_W =\Omega \setminus \overline{\Omega}_{ML}$ denotes the part of the domain occupied by cell walls,   since  the middle lamella  $\Omega_{ML}$ does  not contain  microfibrils.
The domain of both the cell wall matrix and middle lamella is given by
$\Omega^\ve_M = \Omega \setminus \overline{\Omega}^\ve_f$, where $\Omega_f^\ve = \bigcup_{\xi \in \Xi^\ve} \ve R_{x_\xi^\ve}(Y_0 + \xi)$, see Figure~\ref{fig_1}.  We also define  $\tilde Y^\ast_{x} = R_x Y^\ast$, $\tilde Y_x = R_x Y$.

The external  boundaries are defined by
$\Gamma_{\cE} = \{ x \in \partial\Omega \, : \,  x_1 = x_{1, {\rm max}}, x_1 = x_{1, {\rm min}}, x_2 = x_{2, {\rm max}}, x_2 = x_{2, {\rm min}}, x_3 = x_{3, {\rm max}}, \text{ or } x_3 = x_{3, {\rm min}}\}$ and the internal boundaries $\Gamma_{\cI} = \partial \Omega \setminus \Gamma_{\cE}$ correspond to the cell plasma membranes.

The microscopic model for elastic deformations is given by
\begin{equation}\label{EQST}
\begin{aligned}
-\text{div}  \left(\bbE^\ve(x, b^\ve)\,\be(\bu^\varepsilon)\right)&=0  &&  \text{ in } \Omega, \\
\bbE^\ve(x, b^\ve)\,\be(\bu^\varepsilon)\nu&= - P\, \nu  && \text{ on } \Gamma_\cI, \\
\bbE^\ve(x, b^\ve)\,\be(\bu^\varepsilon)\nu&=  \bff   && \text{ on }  \Gamma_\cE,
\end{aligned}
\end{equation}
where
$\bbE^\ve(x, b^\ve)= \bbE_f\chi_{\Omega_f^\ve}(x) +  \bbE_M(x, b^\ve)\big(1-\chi_{\Omega_f^\ve}(x))$,\,
 $\bbE_M(x, b^\ve)= \bbE_{M_w}(b^\ve)\chi_{W}(x) + \bbE_{M_l}(b^\ve)\chi_{ML}(x)$, and $\chi_W$, $\chi_{ML}$ are the characteristic functions of the cell walls $\Omega_W$ and middle lamella $\Omega_{ML}$, respectively.

 The calcium-pectin dynamics is defined in~$(0,T)\times\Omega^\ve_M$  by
\begin{eqnarray}\label{main_2}
\begin{aligned}
\partial_t p_E^\ve& =\text{div}(D_E\nabla p^\ve_E),\qquad\qquad \quad
\partial_t p^\ve_e =\text{div}(D_e\nabla p_e^\ve) - R_{e}(p^\ve_e,p^\ve_E),\\
\partial_t p^\ve_d& =\text{div}(D_d\nabla p^\ve_d) - 2 R_{d}(p^\ve_d,c^\ve) + 2 R_s(b^\ve, b^\ve_s,\be(\bu^\ve))+ R_{e}(p^\ve_e,p^\ve_E),    \\
\partial_t c^\ve &= \text{div}(D_c\nabla c^\ve) -    \;\, R_{d}(p^\ve_d,c^\ve) + \; R_s(b^\ve, b^\ve_s, \be(\bu^\ve)) ,  \\
\partial_t b^\ve&= \text{div}(D_b\nabla b^\ve) + \; R_{d}(p^\ve_d, c^\ve) - R_b(b^\ve,  \be(\bu^\ve)) - d_b b^\ve ,\\
\partial_t b^\ve_s & = \phantom{\text{div}(D_b\nabla b^\ve)  } \; \; \;   R_{b}(b^\ve,  \be(\bu^\ve)) -R_{s}(b^\ve, b^\ve_s, \be(\bu^\ve))  - d_s b_s^\ve,
\end{aligned}
\end{eqnarray}
with   boundary conditions
\begin{equation} \label{BC}
 \begin{aligned}
 D_\alpha \nabla p^\ve_\alpha\cdot\nu=&J_\alpha(p_e^\ve, p^\ve_d, b^\ve)- \gamma_{\alpha} p_\alpha^\ve,    &&  D_c\nabla c^\ve\cdot\nu=  J_c(\bu^\ve) - \gamma_{c} c^\ve   &&  \text{on }  \Gamma_\cI,\\
  D_\alpha \nabla p^\ve_\alpha\cdot\nu= & 0, \qquad  D_c\nabla c^\ve\cdot\nu = 0    &&  \text{on }  \Gamma_{\cE}, \qquad
  D_b\nabla b^\ve\cdot\nu=0    &&  \text{on }   \partial \Omega,
\end{aligned}
\end{equation}
for $\alpha = E,e,d$ and  $J_d((p_e^\ve, p^\ve_d, b^\ve) \equiv 0$, and initial conditions
\begin{equation} \label{IC}
   p_\alpha(0) = p_{\alpha,0}, \quad  c^\ve(0) = c_{0}, \quad b^\ve(0) = b_{0}, \quad
  b^\ve_s(0) = b_{s,0}.
\end{equation}
 Here
$$ R_{b}(b^\ve,  \be(\bu^\ve))  =    r_b(b^\ve )  Q_b(b^\ve, \be(\bu^\ve)),\;\;
R_s(b^\ve, b_s^\ve, \be(\bu^\ve))= r_s(b_s^\ve) Q_s(b^\ve,  \be(\bu^\ve)),
$$
where, e.g., $r_b(b^\ve) =  k_b  b^\ve$, $r_s(b^\ve_s) =  k_s  b^\ve_s$ and
$Q_b, Q_s:  \mathbb R\times \mathbb R^{3\times 3} \to \mathbb R$ are  non-negative  and locally Lipschitz continuous functions, and $Q_b$ is bounded,
\begin{equation*} 
Q_b (b,  \be(\bu)) =\frac{\big(\text{tr } \bbE(x, b) \be(\bu)\big)^{+}}{K_b+\big(\text{tr } \bbE(x, b) \be(\bu)\big)^{+}},  \quad
Q_s(b, \be(\bu)) =\big(\text{tr } \tilde \bbE(x, b) \be(\bu)\big)^{+},
\end{equation*}
for  $k_b, k_s >0$ and $K_b>0$,  assuming that there is a maximum cross-links stretching   rate for large forces and $ \tilde \bbE^\ve(x, b)$ is uniformly bounded in $b$.  In the case of the diffusion of  $b_s$, the boundedness assumption on $Q_b$ can be relaxed~\cite{PS_2_2016}.
  Possible, biologically motivated, examples for reaction terms and  boundary conditions are
\begin{equation*}
\begin{aligned}
& R_e(p^\ve_E, p^\ve_e) = \frac{k_{e1} p_e^\ve p_E^\ve}{1+ k_{e2}  p_e^\ve},
&& R_{d}(p^\ve_d,c^\ve)= \frac{k_{c1} p_d^\ve c^\ve}{k_{c2} + c^\ve}, \qquad J_c(\bu) = \beta_c |\bu|,\\
& J_E(p_e, p_d, b)= \frac{\beta_E p_e}{1+ \zeta_E (p_d+  b)}, && J_e(p_e, p_d, b)= \frac{\beta_e}{1+ \zeta_e p_e +\zeta_d p_d+ \zeta_b b },
\end{aligned}
\end{equation*}
where $k_{li}>0$, for $l=e,c$ and  $i=1,2$, $\beta_j>0$, for $j=E,e,c$, and $\zeta_m>0$, for $m=E,e, d,b$.

\begin{remark} For  simplicity of presentation,  we assume that in all cell walls within a part of plant tissue the layers of microfibrils are rotated around $x_3$-axis. In general, the   layers of  microfibrils are rotated around the axis orthogonal to the cell wall. However, the simplified scenario considered here is not restrictive for the modelling and  multiscale analysis, since  for the more realistic microstructure the analysis  follows the same lines  by considering  different rotation matrices $R(\alpha)$ in different parts of the cell walls.
\end{remark}

\section{Well-posedness results for microscopic  model \eqref{EQST} -- \eqref{IC}}\label{analysis}
To define a weak solution of microscopic model~\eqref{EQST}--\eqref{IC},
we consider the  space
$$
\cW(\Omega) =\{\bu\in H^{1}(\Omega)^3\ \big | \int_\Omega\!\!\bu  dx=0,\;  \int_\Omega \!\!\big[(\nabla\bu)_{ij}- (\nabla \bu)_{ji}\big] dx=0, \;  i,j=1,2,3, \, i \neq j\},
$$
and  the following assumptions
\begin{assumption}\label{assumptions}
\begin{enumerate}
\item The diffusion coefficients satisfy   $D_{\alpha}\geq d>0$, with $\alpha=e,E,d,c, b$.\\
The decay rates satisfy  $\gamma_e, \gamma_E>0$, $\gamma_d, \gamma_c \geq 0$,  $d_b \geq 0$, and $d_s > 0$.
\item  $R_{e} \in C^1(\mathbb R^2)$ and  $R_{e}(\xi, \eta)$ is positive for  positive $\xi$,  $\eta$, with    $R_{e}(0, \eta)= 0$  for  $\eta \in \mathbb R_+$.\\
  $R_{d}:\mathbb R^2 \to \mathbb R$  is locally  Lipschitz continuous,
$R_{d}(\xi, \eta)$ is positive for positive $\xi$, $\eta$, with
$R_{d}(\xi, 0)=0$, $R_{d}(0, \eta)=0$ for $\xi , \eta\in \mathbb R_+$,  and   $R_{d}(\xi, \eta) \leq \beta_{d}(1+ \xi)$.
\item $J_e, \, J_E: \mathbb R^3\to \mathbb R$  are continuously  differentiable in $(- \beta, + \infty)^3$  with $\beta>0$,   and  $0\leq J_E(\xi, \eta, \zeta)\leq \beta_E(1+ \xi)$, \  $0\leq J_e(\xi,\eta, \zeta)\leq \beta_e$ for $\beta_E, \beta_e >0$ and  $\xi, \eta, \zeta \in \mathbb R_+$. \\
$J_c:\mathbb R^3 \to \mathbb R$ is Lipschitz continuous, $0 \leq J_c({\bf v} )\leq \beta_c(1+ |{\bf v}|)$ for $\beta_c >0$ and ${\bf v} \in \mathbb R^3$.
\item Functions $r_b, r_s : \mathbb R \to \mathbb R$  are  Lipschitz continuous,  $r_l(\xi)\geq 0$  for $\xi\geq 0$,  $r_l(0) = 0$,  and $(r_l(\xi_1) - r_l(\xi_2))(\xi_1 - \xi_2) \geq \rho_l |\xi_1-\xi_2|^2$ for $\xi_1, \xi_2 \in \mathbb R_+$ and   $\rho_l \geq 0$, with  $l=b,s$.\\
  $Q_l: \mathbb R\times \mathbb R^{3\times 3} \to \mathbb R$ is continuous, $l=b,s$,  and  $0 \leq Q_b(\xi, \bA) \leq \beta_{b}$,   $0 \leq Q_s(\xi, \bA) \leq \beta_{s}(1+|\bA|)$  for   $\beta_{b}, \beta_{s}>0$ and all $\bA \in  \mathbb R^{3\times 3}$,  $\xi \in \mathbb R_+$,  and
  $$
  \begin{aligned}
  & |Q_l(\xi, \bA_1)-  Q_l(\xi,  \bA_2)| \leq \beta_1(1+  \xi)|\bA_1 - \bA_2|,  \\
  & |Q_l(\xi_1,  \bA)-  Q_l(\xi_2,  \bA)| \leq \beta_2(1+ |\bA|)|\xi_1 - \xi_2|,
  \end{aligned}
  $$
  for $l=b,s$, some
  $\beta_1, \beta_2 >0$ and  all $\xi,  \xi_1, \xi_2 \in \mathbb R_+$ and  $\bA, \bA_1, \bA_2 \in \mathbb R^{3\times 3}$.
  \item Elasticity tensors  $\mathbb E_{M_w}, \mathbb E_{M_l} \in C^1(\mathbb R)$, and  $\mathbb E_F$, $\mathbb E_{M_w}, \mathbb E_{M_l}$ possess major and minor symmetries,
  and $\mathbb E_F \bA \cdot \bA \geq \omega_E |\bA|^2$,   $\mathbb E_{M_j}(\xi) \bA \cdot \bA \geq \omega_E |\bA|^2$  for  $\omega_E >0$ and  all symmetric $\bA \in \mathbb R^{3\times 3}$, $\xi \in \mathbb R_+$,  and $|\mathbb E_{M_j}(\xi)| \leq \gamma_{M} (1+\xi)$  for  $\gamma_M>0$ and all  $\xi \in \mathbb R_+$, with $j=w,l$.
\item Initial conditions $b_0 \in W^{1,4}(\Omega)\cap L^\infty(\Omega)$,   $b_{s,0} \in L^\infty(\Omega)$,  $p_{\alpha0}, c_{0}\in H^1(\Omega)\cap L^\infty(\Omega)$,  are non-negative,   $\mathbf{f} \in C^{0, \sigma}([0,T]; L^2(\partial \Omega))^3$,  $P \in C^{0,\sigma}([0,T]; L^2(\Gamma_{\cI}))$ for some $\sigma \in [1/18, 1]$, and $\alpha= e,E,d$.
\end{enumerate}
\end{assumption}
\begin{remark}
Assumption~\ref{assumptions},  required for the proof of  the well-posedness of  model~\eqref{EQST}--\eqref{IC} and  for the derivation of the macroscopic equations,  are biologically relevant,~\cite{PS_1_2016}, and typical reaction functions considered in Section~\ref{micro_model} will satisfy those assumptions.
 For simplicity and biological relevance, we consider constant diffusion coefficients,  although, more general uniformly elliptic diffusion matrices  can also be considered.
 Assumptions~\ref{assumptions}.2~and~\ref{assumptions}.3 on the reaction terms $R_e$ and $R_d$  and functions $J_\alpha$, for $\alpha = e, E,c$, in the boundary conditions,   ensure the nonnegativity of solutions of the systems of reaction-diffusion  equations and ODE. The sublinearity assumptions on $R_d$, $J_E$, $J_c$  are required to prove a priori estimates for solutions of~\eqref{EQST}--\eqref{IC}.
 The boundedness of $ J_e$ is motivated by the underlying biological processes, however for the analysis it is sufficient to assume the sublinearity of $J_e$ with respect to  $\xi$ and $\eta$. Also it is possible to consider that $J_E$ is sublinear with respect to both $\xi$ and $\eta$. The nonnegativity and monotonicity of $r_l$, for $l=b,s$, and the boundedness and nonnegativity of $Q_b$ in Assumption~\ref{assumptions}.4 are biologically relevant and also essential for the proof of the nonnegativity of solutions and the derivation of a priori estimates. Assumptions~\ref{assumptions}.5 are the standard assumptions on the elasticity tensor.  The assumption that $\mathbb E_{M_l}$ and $\mathbb E_{M_w}$ are sublinear with respect to the cross-links density, used in the derivation of  a priori estimates, is biologically relevant and motivated by experimental results~\cite{PS_2_2016, ZMSR}. The regularity assumptions on the initial data in Assumption~\ref{assumptions}.6 are required for the proof of the boundedness of solutions of model~\eqref{EQST}--\eqref{IC}, whereas the regularity $b_0 \in W^{1,4}(\Omega)$ is needed to show equicontinuity with respect to the time variable of $b^\ve$  in~$L^\infty((0,T)\times\Omega_M^\ve)$ and then of~$\bu^\ve$ in $L^2(0,T;\cW(\Omega))$.
\end{remark}

\begin{definition}
A weak solution of~\eqref{EQST}--\eqref{IC} are functions $(p_\alpha^\ve, c, b^\ve, b^\ve_s, \bu^\ve)$,   such that
$\bu^\ve \in L^2(0,T;\cW(\Omega))$,
  $b^\ve_s \in H^1(0,T; L^2(\Omega^\ve_M))$, $p^\ve_\alpha, c^\ve, b^\ve \in L^2(0,T; H^1(\Omega^\ve_M))$, with  $\partial_t p^\ve_\alpha, \partial_t c^\ve, \partial_t b^\ve \in L^2(0,T; H^1(\Omega^\ve_M)^\prime)$, for $\alpha=e,E, d$,    and
satisfy the equations
\begin{eqnarray}\label{weak_sol_n1}
\begin{aligned}
\langle \partial_t p^\ve_E, \phi_E \rangle_{(H^1)^\prime, H^1}  + \langle D_E \nabla p^\ve_E, \nabla \phi_E \rangle_{\Omega^\ve_{M}}\!&=\langle J_E(p_e^\ve,  p_d^\ve, b^\ve) - \gamma_E p^\ve_E, \phi_E \rangle_{ \Gamma_{\cI}},\\
\langle \partial_t p^\ve_e, \phi_e \rangle_{(H^1)^\prime, H^1} + \langle D_e \nabla p^\ve_e, \nabla \phi_e \rangle_{\Omega^\ve_{M}}
\!&= -\langle  R_{e}(p^\ve_e, p^\ve_E), \phi_e \rangle_{\Omega^\ve_{M}}
\; + \langle J_e(p^\ve_e, p^\ve_d, b^\ve) - \gamma_e p^\ve_e, \phi_e \rangle_{\Gamma_{\cI}},\\
\langle \partial_t p^\ve_d, \phi_d \rangle_{(H^1)^\prime, H^1}  + \langle D_d \nabla p^\ve_d, \nabla \phi_d \rangle_{\Omega^\ve_{M}}
\!& =   \langle R_{e}(p^\ve_e, p^\ve_E), \phi_d \rangle_{\Omega^\ve_{M}} - \langle \gamma_d p_d^\ve, \phi_d \rangle_{\Gamma_{\cI}}\\
& \quad + 2\langle  R_s(b^\ve, b_s^\ve, \be(\bu^\ve)) - R_{d}(p_d^\ve, c^\ve), \phi_d \rangle_{\Omega^\ve_{M}}, \\
\langle \partial_t c^\ve, \phi_c \rangle_{(H^1)^\prime, H^1}  + \langle D_c \nabla c^\ve, \nabla \phi_c \rangle_{\Omega^\ve_{M}}
\!& =   \langle J_c(\bu^\ve) -\gamma_c c^\ve, \phi_c \rangle_{\Gamma_{\cI}}\\
& \quad +  \langle  R_s(b^\ve, b^\ve_s,\be(\bu^\ve)) -R_{d}(p_d^\ve, c^\ve), \phi_c \rangle_{\Omega^\ve_{M}}, \\
\langle \partial_t b^\ve, \phi_b \rangle_{(H^1)^\prime, H^1}  + \langle D_b \nabla b^\ve, \nabla \phi_b \rangle_{\Omega^\ve_{M}}\! & =  -\langle d_b b^\ve  , \phi_b \rangle_{\Omega^\ve_{M}}\\
& \quad + \langle  R_{d}(p_d^\ve,  c^\ve) - R_b(b^\ve, \be(\bu^\ve)) , \phi_b \rangle_{\Omega^\ve_{M}} ,
\end{aligned}
\end{eqnarray}
for a.a.~$t \in (0, T)$ and all  $\phi_\alpha\in L^2(0,T; H^1(\Omega_{M}^\ve))$,     where $\alpha=e,E, d,c, b$,
\begin{equation}\label{weak_sol_bs}
 \partial_t b^\ve_s =  R_{b}(b^\ve, \be(\bu^\ve)) - R_s(b^\ve, b^\ve_s, \be(\bu^\ve)) - d_s b^\ve_s  \quad \text{a.e. in }  \Omega^\ve_{M,T},
\end{equation}
\begin{equation}\label{weak_sol_u}
\langle \bbE^\ve(x,b^\ve)\be(\bu^\ve), \be(\boldsymbol{\psi}) \rangle_{\Omega} = -\langle P \nu, \boldsymbol{\psi} \rangle_{\Gamma_{\cI}} +\langle \bff, \boldsymbol{\psi} \rangle_{\Gamma_{\cE}},
\end{equation}
for a.a.~$t \in (0, T)$ and all $\boldsymbol{\psi} \in L^2(0,T; \cW(\Omega))$. Furthermore, $p_\alpha^\ve, c, b^\ve, b^\ve_s$ satisfy the  initial conditions \eqref{IC} in $L^2(\Omega_M^\ve)$,  where  $\alpha=e,E,d$.
\end{definition}
Here $\langle \psi, \phi \rangle_{(H^1)^\prime, H^1}$ denotes the dual product for $\phi \in  L^2(0,T; H^1(\Omega_{M}^\ve))$, $\psi \in  L^2(0,T; H^1(\Omega_{M}^\ve)^\prime)$ and $\langle \phi, \psi \rangle_{\Omega} = \int_\Omega \phi \psi dx$ for $\phi \in L^q(\Omega)$,  $\psi \in L^p(\Omega)$ with $1/p+ 1/q =1$, and similar for $\langle \cdot, \cdot\rangle_{\Omega_M^\ve}$, $\langle \cdot, \cdot\rangle_{\Omega_T}$,  $\langle \cdot, \cdot\rangle_{\Omega^\ve_{M,T}}$, $\langle \cdot, \cdot\rangle_{\Gamma_\cI}$, and  $\langle \cdot, \cdot\rangle_{\Gamma_{\cI, T}}$, where $\Omega_T = (0,T)\times \Omega$, $\Omega^\ve_{M,T}= (0,T)\times \Omega^\ve_M$, and $\Gamma_{\cI, T} = (0,T)\times \Gamma_{\cI}$.

Since $\cW(\Omega) \cap \mathcal R(\Omega) = \{ {\bf 0}\}$, where $\mathcal R(\Omega)$ denotes the space of rigid displacements of $\Omega$, by Korn's second inequality~\cite{CC} we have
\begin{equation}\label{Korn_in}
 \| \bu\|_{\cW(\Omega)} \leq  C \|\be(\bu)\|_{L^2(\Omega)} \qquad \text{ for all }\,  \bu \in \cW(\Omega).
\end{equation}

In the derivation of the a priori estimates we will use the following trace and Gagliardo-Nirenberg inequalities, see e.g.~\cite{Brezis, Galdi},
\begin{equation}\label{trace_GN_ineq}
 \begin{aligned}
&  \|u\|_{L^2(\partial \Omega)} \leq C \big[\|u\|_{L^2(\Omega)} +  \|u\|_{L^2(\Omega)}^{\frac 12}\|\nabla u\|^{\frac 12}_{L^2(\Omega)}\big]  \leq C \|u\|_{L^2(\Omega)} + \delta \|\nabla u\|_{L^2(\Omega)} , \\
&  \|u\|_{L^r(\partial \Omega)} \leq C \big[\|u\|_{L^2(\Omega)}^{(1-\lambda)}\|\nabla u\|^{\lambda}_{L^2(\Omega)} +  \|u\|_{L^2(\Omega)}^{(1-\frac 1r)(1-\lambda)}\|\nabla u\|^{\frac 1r + \lambda(1-\frac 1 r)}_{L^2(\Omega)}\big]   , \\
   & \|u\|_{L^q(\Omega)} \leq C  \big[ \|\nabla u\|_{L^2(\Omega)}^{\lambda_1} \|u\|_{L^2(\Omega)}^{1-\lambda_1} + \|u\|_{L^1(\Omega)}\big], \\
&    \|u\|_{L^q(\Omega)} \leq C \big[  \|\nabla u\|_{L^2(\Omega)}^{\lambda_2} \|u\|_{L^1(\Omega)}^{1-\lambda_2} + \|u\|_{L^1(\Omega)}\big],
 \end{aligned}
\end{equation}
for $u \in H^1(\Omega)$ and $\delta>0$, where $\lambda= d(r-2)/(2(r-1))$ for $2\leq r \leq 2(d-1)/(d-2)$,  $\lambda_1 = d (1/2 - 1/q)$ for $2\leq q \leq 2d/(d-2)$ and $\lambda_2 = 2 d(q-1)/(q(d+2))$ for $1\leq q \leq  2d/(d-2)$ for $d> 2$, and $d={\rm dim}(\Omega)$.

To obtain estimates uniform in~$\varepsilon$, we will also use  the extension results, ensured by the assumptions on  the  microstructure of $\Omega^\ve_M$ and the regularity of rotation matrix $R$.
\begin{lemma}\label{extension_local} \cite{Ptashnyk_2017}
For $x_j^\ve \in \Omega_W$,   and $w \in W^{1,q}(\tilde Y^\ast_{x_j^\ve})$, with $q\in [1, +\infty)$ and $j \in \Xi^\ve$,  there exists an extension  $\tilde w \in  W^{1,q}(\tilde Y_{x_j^\ve})$ from $\tilde Y^\ast_{x_j^\ve}$ into $\tilde Y_{x_j^\ve}$ such that
\begin{equation}\label{extension_local_1}
\|\tilde w\|_{L^q(\tilde Y_{x_j^\ve})} \leq \mu \|w\|_{L^q( \tilde Y_{x_j^\ve}^\ast)}, \qquad \|\nabla \tilde w\|_{L^q(\tilde Y_{x_j^\ve})} \leq \mu \|\nabla w\|_{L^q(\tilde Y_{x_j^\ve}^\ast)},
\end{equation}
where $\mu$ depends on $Y_1$, $Y_0$, and $R$ and is independent of $\ve$ and $j \in \Xi^\ve$.

For $w \in W^{1,q}(\Omega^{\ve}_{MW})$  we have an extension $\tilde w \in W^{1,q}(\Omega_W)$ from $\Omega_{MW}^{\ve}$ into $\Omega_W$,
\begin{equation}\label{extension_local_2}
\|\tilde w\|_{L^q(\Omega_W)} \leq \mu \|w\|_{L^q(\Omega^{\ve}_{MW})},
\qquad \|\nabla \tilde w\|_{L^q(\Omega_W)} \leq \mu \|\nabla w\|_{L^q(\Omega^{\ve}_{MW})}\; ,
\end{equation}
where  $\mu$ depends on $Y_1$, $Y_0$, and $R$,  and is independent of $\ve$, and $\Omega_{MW}^\ve =\Omega^{\ve}_M\cap \Omega_W$.
\end{lemma}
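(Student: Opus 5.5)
The plan is the classical extension argument for perforated domains (Cioranescu–Saint Jean Paulin, Oleinik–Shamaev–Yosifian), transported to rotated cells.

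\textbf{Step 1: the reference cell.} Since $\Gamma=\partial Y_0$ is a cylinder surface and $Y_0\subset\subset$ its slab in the $\hat y$-directions, the set $Y^\ast$ is Lipschitz, connected, and contains a neighbourhood of $\partial Y\cap\{|\hat y|\ge a\}$. Choose an intermediate $Y_1=\{y\in \overline Y: |\hat y|<a_1\}$ with $a<a_1<1/2$. On the annular region $Y_1\setminus \overline Y_0$, use a standard Lipschitz extension operator into $Y_1$: reflection in $|\hat y|$ across $|\hat y|=a$, combined with a cut-off. Apply it not to $w$ itself but to $w-\bar w$, where $\bar w$ is the mean of $w$ over $Y_1\setminus\overline Y_0$, and then add the constant back. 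Poincaré–Wirtinger on this connected set gives $\|w-\bar w\|_{L^q}\le C\|\nabla w\|_{L^q}$. Hence $\|\nabla \tilde w\|_{L^q(Y)}\le \mu_0\|\nabla w\|_{L^q(Y^\ast)}$ and $\|\tilde w\|_{L^q(Y)}\le \mu_0\|w\|_{L^q(Y^\ast)}$. The construction is unchanged along $y_1$, and it leaves $w$ untouched on $Y\setminus Y_1$.

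\textbf{Step 2: the rotated cells.} For $x_j^\ve$, set $w_R(y)=w(R_{x_j^\ve}y)$. Because $R$ is orthogonal, the change of variables has Jacobian $1$ and $|\nabla w_R|=|\nabla w|\circ R$. So extending $w_R$ by Step 1 and rotating back gives \eqref{extension_local_1}, with $\mu=\mu_0$ independent of $j$ and $\ve$, and equally of the angle $\gamma$. Scaling by $\ve$ is harmless: the $L^q$ gradient bound is scale-invariant, and the $L^q$ function bound scales identically on both sides.

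\textbf{Step 3: the global extension.} Apply Step 2 in each scaled rotated cell $\ve R_{x_\xi^\ve}(Y+\xi)$ with $\xi\in\Xi^\ve$, modifying $w$ only inside $\ve R_{x_\xi^\ve}(Y_1+\xi)$. The modification stays inside the cells strictly away from cell boundaries in the $\hat y$-directions. In the $y_1$-direction the fibre passes through the cell faces, and there the construction is translation invariant in $y_1$. Gluing therefore yields a $W^{1,q}$ function across the faces where neighbouring cells share an axis.

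\textbf{Main obstacle.} Neighbouring cells in the $x_3$-direction have slightly different rotations, since $x^\ve_\xi$ depends on $\xi_3$ through $\gamma$, and near $\partial\Omega_W$ there are cells not in $\Xi^\ve$. I would handle this as follows:
\begin{enumerate}
\item Rotation about $x_3$ keeps the slabs $\{\ve(\xi_3-1/2)<x_3<\ve(\xi_3+1/2)\}$ invariant, so cells in distinct $x_3$-layers meet only along horizontal planes.
\item Since $|\hat y|<a_1<1/2$, the fibres $Y_1$ stay away from $y_3=\pm1/2$.
\item Within one layer all cells share the same rotation, because $x_\xi^\ve$ has third coordinate $\ve\xi_3$. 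The layer is then an exactly periodic rotated lattice, so no gaps or overlaps arise there.
\item Fibres lie only in cells with $\xi\in\Xi^\ve$, which are contained in $\Omega_W$, so no modification is needed near $\partial\Omega_W$.
\end{enumerate}
Summing the local estimates over the disjoint supports of the modifications then gives \eqref{extension_local_2} with the same $\mu$.
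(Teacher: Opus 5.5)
The paper only quotes this lemma from earlier work and gives no proof, so this review assesses your argument on its own. Steps 1 and 2 are fine and prove \eqref{extension_local_1}; since $R_x$ is orthogonal, the constant is even independent of the angle. Step 3 does not give \eqref{extension_local_2}. The fibres are not isolated inclusions, because $Y_0$ runs through the faces $y_1=\pm\frac12$, and the gluing fails in two places.

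\textbf{Gluing along a fibre.} Consider the disc where a fibre passes from cell $\xi$ into cell $\xi+e_1$. Write $\Sigma$ for your radial reflection and $\eta$ for your cut-off. From one side your extension has trace $\bar w_\xi(1-\eta)+\eta\,w\circ\Sigma$, and from the other side $\bar w_{\xi+e_1}(1-\eta)+\eta\,w\circ\Sigma$. It therefore jumps by $(\bar w_\xi-\bar w_{\xi+e_1})(1-\eta)$, and the glued function is not in $W^{1,q}(\Omega_W)$. Translation invariance of the operator in $y_1$ does not help, because the subtracted constant depends on the cell.

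This part is repairable by working cross-section by cross-section. Subtract $m(y_1)=|A|^{-1}\int_A w(y_1,\hat y)\,d\hat y$, where $A=\{a<|\hat y|<a_1\}$. Control $\hat\nabla\tilde w$ with the two-dimensional Poincar\'e--Wirtinger inequality on $A$. Control $\partial_{y_1}\tilde w$ with $|m'(y_1)|\le |A|^{-1/q}\|\partial_{y_1}w(y_1,\cdot)\|_{L^q(A)}$. The operator then acts slice by slice along the whole straight fibre. All cells of that fibre lie in one layer and carry the same rotation, so no matching condition arises along it.

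\textbf{Fibre ends (your point 4).} A fibre stops at the face of the last cell of $\Xi^\ve$ in its direction. Since $\ve R_{x^\ve_\xi}(\overline Y+\xi)\subset\Omega_W$ and $\Omega_W$ is open, the other side of this end disc is matrix of $\Omega^\ve_{MW}$, where $\tilde w=w$. Neither your extension nor the slice-wise one has the trace of $w$ on that disc. A separate construction is therefore needed at every fibre end. One option is to blend in $y_1$, over a length of order $\ve$, with the reflection of $w$ across the end face, and to control the cross term by Poincar\'e--Wirtinger on a fixed-shape matrix neighbourhood of the end.

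This is not a formality. Suppose the matrix beyond an end disc is a sliver of thickness $\delta\ll\ve$. Let $\hat z$ be the transverse position relative to the fibre axis, $B_a$ the disc of radius $a$, and $0\neq\Phi\in C^\infty_c(B_a)$. Set $w=\Phi(\hat z/\ve)$ in the part of the sliver above the disc and $w=0$ elsewhere in $\Omega^\ve_{MW}$.
\begin{itemize}
\item Then $\|\nabla w\|^q_{L^q}\sim\delta\,\ve^{2-q}$.
\item Every admissible extension has zero trace on the lateral surface of the fibre and trace $\Phi(\hat z/\ve)$ on its end disc, so $\|\nabla \tilde w\|^q_{L^q}\gtrsim \ve^{3-q}$.
\item Together these force $\mu^q\gtrsim \ve/\delta$.
\end{itemize}
The definition of $\Xi^\ve$ only requires the closed cell to lie in $\Omega_W$, so it does not exclude such slivers. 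Your argument must therefore add, and use, a condition guaranteeing a matrix cap of length comparable to $\ve$ beyond every fibre end, for example a margin of order $\ve$ in the definition of $\Xi^\ve$.
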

Using the extension for $b^\ve$ we can define the extension for $b^\ve_s$ from $\Omega^\ve_M$ into $\Omega$ as a solution of~\eqref{weak_sol_bs}, where $b^\ve$ is replaced by its extension.

The properties of the extension~\eqref{extension_local_2} and inequalities~\eqref{trace_GN_ineq} imply,  for $w^\ve \in H^1(\Omega^\ve_M)$,
\begin{equation}\label{trace_GN_ineq_extension}
 \begin{aligned}
 & \|w^\ve\|_{L^2(\partial \Omega)} \! \leq \! C \|\tilde w^\ve\|_{L^2(\Omega)}\! + \delta \|\nabla \tilde w^\ve\|_{L^2(\Omega)}
\!\leq \! C \|w^\ve\|_{L^2(\Omega^\ve_M)}\!\! + \delta \|\nabla  w^\ve\|_{L^2(\Omega^\ve_M)}, \\
 & \|w^\ve\|_{L^r(\partial \Omega)} \!\leq \! C \big[\|\tilde w^\ve\|_{L^2(\Omega)}^{(1-\lambda)}\|\nabla \tilde w^\ve\|^{\lambda}_{L^2(\Omega)}\! +  \|\tilde w^\ve\|_{L^2(\Omega)}^{(1-\frac 1r)(1-\lambda)}\|\nabla \tilde w^\ve\|^{\frac 1r + \lambda(1-\frac 1 r)}_{L^2(\Omega)}\big] \\
 & \qquad \quad \leq \! C \big[\|w^\ve\|_{L^2(\Omega_M^\ve)}^{(1-\lambda)}\|\nabla  w^\ve\|^{\lambda}_{L^2(\Omega_M^\ve)} \!+  \| w^\ve\|_{L^2(\Omega_M^\ve)}^{(1-\frac 1r)(1-\lambda)}\|\nabla  w^\ve\|^{\frac 1r + \lambda(1-\frac 1 r)}_{L^2(\Omega_M^\ve)}\big], \\
 & \|w^\ve\|_{L^q(\Omega^\ve_M)} \! \leq \|\tilde w^\ve\|_{L^q(\Omega)} \!\leq C   \big[\|\nabla \tilde w^\ve\|_{L^2(\Omega)}^{\lambda_1} \|\tilde w^\ve\|_{L^2(\Omega)}^{1-\lambda_1} + \|\tilde w^\ve\|_{L^1(\Omega)}\big]\\
 & \qquad \quad  \leq \! C  \big[ \|\nabla w^\ve\|_{L^2(\Omega^\ve_M)}^{\lambda_1} \|w^\ve\|_{L^2(\Omega^\ve_M)}^{1-\lambda_1} + \|w^\ve\|_{L^1(\Omega^\ve_M)} \big], \\
 &  \|w^\ve\|_{L^q(\Omega_M^\ve)} \! \leq \|\tilde w^\ve\|_{L^q(\Omega)} \!\leq C  \big[ \|\nabla \tilde w^\ve\|_{L^2(\Omega)}^{\lambda_2} \|\tilde w^\ve\|_{L^1(\Omega)}^{1-\lambda_2} + \|\tilde w^\ve\|_{L^1(\Omega)}\big]\\
& \qquad \quad \leq \! C  \big[ \|\nabla  w^\ve\|_{L^2(\Omega^\ve_M)}^{\lambda_2} \| w^\ve\|_{L^1(\Omega^\ve_M)}^{1-\lambda_2} + \|w^\ve\|_{L^1(\Omega^\ve_M)}\big],
 \end{aligned}
\end{equation}
for any fixed $\delta >0$ and the constant $C$ depends on $\Omega$, $Y_1$, $Y_0$, $R$, and is independent of~$\ve$. 

\begin{lemma}\label{estim:apriori}
Under Assumption~\ref{assumptions}, solutions of  microscopic model~\eqref{EQST}--\eqref{IC} satisfy
\begin{equation}\label{estim_apriori}
\begin{aligned}
&  \| \bu^\ve\|_{L^\infty(0,T;\cW(\Omega))}  \leq C,
\quad \|\theta_h \bu^\ve - \bu^\ve\|_{L^2(0,T-h; \cW(\Omega))} \leq C \big(h^{\frac \varsigma{2(\varsigma +1)}}+  h^{\frac{1-9 \zeta} {1+ \zeta}} \big), \\
 &\|p^\ve_\alpha\|_{L^\infty(\Omega_{M, T}^\ve)} + \|\nabla p^\ve_\alpha\|_{L^2( \Omega_{M,T}^\ve)}   + \|w^\ve\|_{L^\infty(\Omega_{M, T}^\ve)} + \|\nabla w^\ve\|_{L^2( \Omega_{M,T}^\ve)}    \leq C, \\
  &\| \theta_h p^\ve_\alpha - p^\ve_\alpha\|_{L^2( \Omega_{M, T-h}^\ve)} +\| \theta_h w^\ve - w^\ve\|_{L^2(\Omega_{M, T-h}^\ve)}  \leq C h^{\frac 1 4} ,\\
   &\|b^\ve_s\|_{L^\infty(0,T; L^q(\Omega_M^\ve))} + \|b^\ve_s\|_{L^\infty(\Omega_{M, T}^\ve)} +  \|\partial_t  b^\ve_s\|_{L^2(\Omega_{M, T}^\ve)}  \leq C,
 \end{aligned}
\end{equation}
where  $w^\ve=c^\ve,b^\ve$ and  $\alpha=e,E,d$,  and  $\theta_h v(t,x) = v(t+h, x)$ for $t \in [0, T-h]$, $x\in \Omega_M^\ve$,  $0<h\leq 1$,  any  $\varsigma \in (0, 1/9)$ and   $q \in [2, \infty)$, and the  constant $C$ is independent of $\ve$.
\end{lemma}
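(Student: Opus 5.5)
We first establish nonnegativity of all concentrations. Testing each equation in \eqref{weak_sol_n1} with the negative part of the unknown and using the sign conditions in Assumptions~\ref{assumptions}.2--\ref{assumptions}.4 ($R_e(0,\cdot)=0$, $R_d(0,\cdot)=R_d(\cdot,0)=0$, $r_l(0)=0$, $Q_l\ge 0$, $J_\alpha\ge0$) gives nonnegativity. For $b_s^\ve$ we use the ODE~\eqref{weak_sol_bs} with nonnegative source $R_b$.

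Next we prove the elasticity estimate. Testing \eqref{weak_sol_u} with $\bu^\ve$, using the coercivity $\bbE^\ve\bA\cdot\bA\ge\omega_E|\bA|^2$ (uniform in $b^\ve\ge0$), the trace inequality, Korn's inequality~\eqref{Korn_in} and $\bff,P\in C^{0,\sigma}([0,T];L^2)$, yields $\|\bu^\ve(t)\|_{\cW(\Omega)}\le C$ for every $t$, independently of $b^\ve$. This decoupling is crucial, because all reaction terms containing $\be(\bu^\ve)$ are then controlled in $L^\infty(0,T;L^2)$.

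For the reaction-diffusion system we test with $p_\alpha^\ve$, $c^\ve$, $b^\ve$ and use the extension-based trace and Gagliardo--Nirenberg inequalities~\eqref{trace_GN_ineq_extension}. The sublinearity of $R_d$, $J_E$, $J_c$ and the boundedness of $J_e$ give $L^\infty(0,T;L^2)\cap L^2(0,T;H^1)$ bounds. The term $J_c(\bu^\ve)$ is handled by $\|\bu^\ve\|_{L^2(\partial\Omega)}\le C\|\bu^\ve\|_{\cW}$. The coupling via $R_s=r_s(b_s^\ve)Q_s$, with $Q_s\le\beta_s(1+|\be(\bu^\ve)|)$, is quadratic-type. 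We treat it with the ODE for $b_s^\ve$. Since $R_b\le k_b\beta_b b^\ve$ and $R_s\ge0$, comparison gives $b_s^\ve(t)\le b_{s,0}+C\int_0^t b^\ve\,ds$. Hence $L^q$ and $L^\infty$ bounds on $b^\ve$ transfer directly to $b_s^\ve$, and then $\partial_tb_s^\ve\in L^2$ follows once $b_s^\ve\in L^\infty$ and $\be(\bu^\ve)\in L^\infty(0,T;L^2)$.

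The main obstacle is the $L^\infty$ bounds, which we obtain by Alikakos iteration. We test with $|w|^{p-2}w$ for $p=2^k$ and apply~\eqref{trace_GN_ineq_extension} to $|w|^{p/2}$ with the $L^1$-interpolation (exponent $\lambda_2$). The right-hand side containing $b_s^\ve|\be(\bu^\ve)|$ is estimated by $\|b_s^\ve\|_{L^\infty}$ times $\|\be(\bu^\ve)\|_{L^2}\||w|^{p-1}\|_{L^2}$. Since we need $b_s^\ve$ in $L^\infty$ simultaneously, we combine this with the bound $\|b_s^\ve\|_{L^\infty(0,t)}\le C(1+\|b^\ve\|_{L^\infty(0,t)})$. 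This produces a recursive inequality $M_{2p}\le (Cp)^{\kappa}\max\{1,M_p^2\}$ and hence $\sup_k M_{2^k}^{1/2^k}<\infty$. The argument is done first for $b^\ve$ (whose source is $R_d\le\beta_d(1+p_d^\ve)$) together with $p_d^\ve$ and $c^\ve$ as a coupled sum; if linear growth in $M$ causes trouble, we instead use a Gronwall argument on short time intervals.

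The time-translation estimates follow in two steps. For $p_\alpha^\ve$ and $w^\ve$ we integrate the weak equations over $(t,t+h)$ and test with $\theta_hw-w$, giving $h^{1/2}$ in the squared norm and thus $h^{1/4}$. For $\bu^\ve$ we subtract \eqref{weak_sol_u} at $t+h$ and $t$ and test with $\theta_h\bu^\ve-\bu^\ve$, which gives
$$\|\be(\theta_h\bu^\ve-\bu^\ve)\|\le C\big(h^\sigma+\|(\bbE(\theta_hb^\ve)-\bbE(b^\ve))\be(\bu^\ve)\|_{L^2}\big).$$
Since $\be(\bu^\ve)$ is only $L^2$, the last term requires the $L^\infty$ time-continuity of $b^\ve$. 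We obtain this via interpolation between the $L^2$ translation estimate and a $W^{1,4}$-type bound on $b^\ve$, which is where $b_0\in W^{1,4}$ enters; balancing these exponents yields the stated rates $h^{\varsigma/(2(\varsigma+1))}$ and $h^{(1-9\zeta)/(1+\zeta)}$.
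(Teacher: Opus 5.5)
Your outline matches the paper in most steps: testing with $\bu^\ve$, energy estimates, Alikakos iteration, and the $h^{1/4}$ translation bounds for the reaction--diffusion unknowns. The last step, however, has a genuine gap. You correctly see that the translation estimate for $\bu^\ve$ needs $\theta_h b^\ve-b^\ve$ in $L^\infty$ in space. You propose to get it by interpolating the $L^2$ translation bound against ``a $W^{1,4}$-type bound on $b^\ve$'', but nothing in your argument produces $\nabla b^\ve(t)\in L^4(\Omega^\ve_M)$ uniformly in $\ve$ and $t$:
energy and Alikakos-type testing control $\nabla b^\ve$ and $\nabla|b^\ve|^{q/2}$ only in $L^2$; the extension of Lemma~\ref{extension_local} transports a gradient bound but cannot create one; and $b_0\in W^{1,4}$ is information about the initial datum only. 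A uniform $L^p$-gradient bound with $p>2$ for a Neumann problem in the perforated, non-periodic domain $\Omega^\ve_M$ would require a large-scale (Avellaneda--Lin type) regularity theory that is not available here. On a merely Lipschitz $\Omega$ it need not hold even for fixed $\ve$. Even granting such a bound, the interpolation $\|v\|_{L^\infty}\le C\|v\|_{W^{1,4}}^{6/7}\|v\|_{L^2}^{1/7}$ combined with $\|\theta_hb^\ve-b^\ve\|_{L^2}\le Ch^{1/4}$ gives at best $\|\theta_hb^\ve-b^\ve\|_{L^2(0,T-h;L^\infty)}\le Ch^{1/28}$. The stated exponents do not come from such a balancing.

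The missing idea is that no spatial regularity of $b^\ve(t)$ is needed. Write $\widetilde b^\ve=\theta_hb^\ve-b^\ve$ and $\widetilde\bu^\ve=\theta_h\bu^\ve-\bu^\ve$. The paper runs the Alikakos iteration on $\widetilde b^\ve$ itself, testing with $\widetilde b^\ve|\widetilde b^\ve|^{q-2}$. The sources $\be(\widetilde\bu^\ve)$, $\theta_hp_d^\ve-p_d^\ve$ and $\theta_hc^\ve-c^\ve$ then enter only through $L^{1+1/\varsigma}(0,\tau;L^2)$ norms; the Gagliardo--Nirenberg exponent $9/10$ is what forces $\varsigma<1/9$. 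Inserting $\|\be(\widetilde\bu^\ve)(t)\|_{L^2}\le C(h^\sigma+\|\widetilde b^\ve(t)\|_{L^\infty})$ from \eqref{estim_h_u} closes the loop between $\widetilde\bu^\ve$ and $\widetilde b^\ve$, using the smallness of $C\tau^{\varsigma/(\varsigma+1)}$ on short time intervals, which are then concatenated. The hypothesis $b_0\in W^{1,4}$ enters only in the initial layer $\|b^\ve(h)-b_0\|_{L^\infty}$. Testing with $|b^\ve-b_0|^{q-2}(b^\ve-b_0)$ produces the term $\int D_b\nabla b_0\cdot\nabla\big(|b^\ve-b_0|^{q-2}(b^\ve-b_0)\big)dx$, which needs $\nabla b_0\in L^4$. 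A second iteration then yields the $h^{(1-9\zeta)/(1+\zeta)}$ contribution.

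There are also two smaller points. First, $R_e$ satisfies no growth condition, so $p_e^\ve$ and $p_E^\ve$ must be bounded in $L^\infty$ before any estimate for $p_d^\ve$. This follows by testing with $(p_e^\ve-M_e)_+$ and $(p_E^\ve-M_E)_+$, where $M_e\ge\max\{\beta_e/\gamma_e,\|p_{e0}\|_{L^\infty}\}$ and $M_E\ge\max\{\beta_E(1+M_e)/\gamma_E,\|p_{E0}\|_{L^\infty}\}$. Your ordering skips this step, and your iteration never covers these two species.

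Second, inserting $\|b_s^\ve\|_{L^\infty}\le C(1+\|b^\ve\|_{L^\infty})$ at level $q$ puts the quantity being estimated into every step. So the recursion $M_{2p}\le(Cp)^\kappa\max\{1,M_p^2\}$ does not follow as written. Your comparison inequality $b_s^\ve(t)\le b_{s,0}+C\int_0^tb^\ve\,ds$ is valid, since $R_b\le L_{r_b}\beta_bb^\ve$ and the remaining terms are nonpositive. It repairs the recursion if you use it at the $L^{2q}$ level, $\|b_s^\ve(t)\|_{L^{2q}}\le\|b_{s,0}\|_{L^{2q}}+C\int_0^t\|b^\ve\|_{L^{2q}}ds$. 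This plays the role of \eqref{bs_Lp2} and is simpler than the paper's H\"older--Young derivation of it.
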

\begin{proof}
The proof follows similar ideas as in~\cite{PS_1_2016}.
The  first estimate for $\bu^\ve$ follows directly by considering $\bu^\ve$ as a test function in \eqref{weak_sol_u} and using the ellipticity properties of~$\bbE$,  Assumption~\ref{assumptions}.5, the second Korn inequality, see~\eqref{Korn_in} and e.g.~\cite{Ciarlet}, and Assumption~\ref{assumptions}.6 on $\bff$ and $P$.

 Using the method of positive invariant regions, e.g.~\cite{Redlinger, Smoller}, and assumptions on the non-linear functions  yields that $p^\ve_\alpha, c^\ve, b^\ve, b_s^\ve$ are non-negative,  where $\alpha=E,e,d$.

Considering $p_e^\ve$ and $p_E^\ve$ as test functions in the equations for  $p_e^\ve$ and $p_E^\ve$, respectively, and using the non-negativity of $p_\alpha^\ve$ and $b^\ve$, with $\alpha = e,E,d$, sublinearity of $J_E$, boundedness of $J_e$,  and non-negativity of $R_e$, see Assumptions~\ref{assumptions}.2~and~\ref{assumptions}.3,   yield
$$
\begin{aligned}
 \|p_E^\ve(\tau)\|^2_{2} \! + \|p_e^\ve(\tau)\|^2_{2} \!   +  2\! \int_0^\tau \!\!\! d\big[\|\nabla p_E^\ve(t)\|^2_{2} \! + \|\nabla p_e^\ve(t)\|^2_{2}\big]
 + \tilde \gamma \big[ \|p_E^\ve(t) \|^2_{L^2(\Gamma_\cI)} +  \|p_e^\ve(t) \|^2_{L^2(\Gamma_\cI)}\big]  dt
 \\
  \leq \|p_{E0}\|^2_{2} + \|p_{e0}\|^2_{2}  +  \tilde \beta  \!\int_0^\tau\!\! \!\big[1+ \|p_E^\ve(t)\|^2_{L^2(\Gamma_\cI)} + \|p_e^\ve(t)\|^2_{L^2(\Gamma_\cI)}\big] dt,
\end{aligned}
$$
for $\tau \in (0, T]$, where $\tilde \gamma = 2\min\{ \gamma_e, \gamma_E\}$, $\tilde \beta = 2\max\{\beta_e, \beta_E\}$,  and we use the short notation $\|\cdot \|_{q} = \|\cdot \|_{L^q(\Omega^\ve_M)}$.
Using the regularity of initial data, see Assumption~\ref{assumptions}.6, and the trace inequality~\eqref{trace_GN_ineq_extension},  applied to an extension of $p_e^\ve$ and $p_E^\ve$ from $\Omega^\ve_M$ to $\Omega$,  and the Gr\"onwall inequalities, we obtain
the estimates for   $p_e^\ve$ and $p_E^\ve$ in $L^\infty(0,T; L^2(\Omega_M^\ve))$ and $L^2(0,T; H^1(\Omega_M^\ve))$.
Considering $(p_e^\ve-M_e)_+$ and $(p_E^\ve-M_E)_+$, with some  $M_e, M_E >0$,   as test functions in the equations for  $p_e^\ve$ and $p_E^\ve$, respectively,  we obtain
$$
\begin{aligned}
\|(p_e^\ve(\tau)-M_e)_+\|^2_{L^2(\Omega^\ve_M)} + 2d  \|\nabla (p_e^\ve-M_e)_+\|^2_{L^2(\Omega_{M,\tau}^\ve)} +2 \gamma_e \langle p_e^\ve, (p_e^\ve-M_e )_+ \rangle_{\Gamma_{\cI, \tau}}
\\ \leq \|(p_{e0}-M_e)_+\|^2_{L^2(\Omega^\ve_M)} +  2\beta_e \langle 1, (p_e^\ve-M_e)_+ \rangle_{\Gamma_{\cI, \tau}},
\\
 \|(p_E^\ve(\tau) - M_E)_+\|^2_{L^2(\Omega^\ve_M)}   + 2d  \|\nabla (p_E^\ve-M_E)_+\|^2_{L^2(\Omega_{M,\tau}^\ve)}
 +   2 \gamma_E\langle p_E^\ve , (p_E^\ve - M_E)_+ \rangle_{\Gamma_{\cI, \tau}}
 \\
 \leq \|(p_{E0}-M_E)_+\|^2_{L^2(\Omega^\ve_M)}+ 2\beta_E \langle (1+ p_e^\ve), (p_E^\ve - M_E)_+ \rangle_{\Gamma_{\cI, \tau}}.
\end{aligned}
$$
Choosing  $M_e \geq \max\{\beta_e/\gamma_e, \|p_{e0}\|_{L^\infty(\Omega)}\}$,  $M_E \geq \max\{\beta_E(1+ M_e)/\gamma_E, \|p_{E0}\|_{L^\infty(\Omega)}\}$,  yields $p_e^\ve(t,x) \leq M_e$ and $p_E^\ve(t,x) \leq M_E$, for $(t,x)  \in (0,T)\times \Omega_M^\ve$.

Considering $p_d^\ve$, $c^\ve$, and $b^\ve$ as test functions in the corresponding equations in~\eqref{weak_sol_n1} and using  boundedness of $p_e$ and $p_E$ and  Assumptions~\ref{assumptions}.2~and~\ref{assumptions}.4  on $R_e$, $R_d$, $r_l$ and $Q_l$, for $l=b,s$,  imply
\begin{equation}\label{estim_L2}
\begin{aligned}
 &  \|p_d^\ve (\tau)\|^2_{2} + \|c^\ve (\tau)\|^2_{2} + \|b^\ve (\tau)\|^2_{2} +  2d \! \int_0^\tau \!\!\Big[\|\nabla p_d^\ve\|^2_{2} +   \|\nabla c^\ve\|^2_{2}   + \|\nabla b^\ve\|^2_{2}\Big] dt
\\
& \qquad \leq  \|p_{d0} \|^2_{2} + \|c_0\|^2_{2} +  \|b_0\|^2_{2} + 2\!\int_0^\tau\!\! \big\langle |J_c(\bu^\ve)|, |c^\ve| \big\rangle_{\Gamma_\cI} dt
\\
&\;  +   C \!\int_0^\tau\!\!\Big[1+ \|p_d^\ve\|^2_{2}
 +   \|b^\ve\|^2_{2}+ \big(1+\|\be(\bu^\ve)\|_{2}\big)\|b_s^\ve\|_{4}\big[\|p_d^\ve\|_{4} + \|c^\ve\|_{4}\big]  \Big]dt,
 \end{aligned}
\end{equation}
where the constant  $C= 2\max\{ 2\beta_s L_{r_s}, M_{R_e},\beta_d\}$, with $L_{r_s}$ denoting the Lipschitz constant of $r_s$ and $M_{R_e} = \!\! \max\limits_{0\leq \xi\leq M_e, 0 \leq \eta\leq M_E}\!\! R_e(\xi,\eta)$.
Using Assumption~\ref{assumptions}.3 on $J_c$ and  applying the trace inequalities in \eqref{trace_GN_ineq} and~\eqref{trace_GN_ineq_extension}, yield
\begin{equation}\label{estim_L2_bt}
\begin{aligned}
 \int_0^\tau\!\! \big\langle |J_c(\bu^\ve)|, |c^\ve| \big\rangle_{\Gamma_\cI} dt &\leq
 C_{\delta_1} \big(1+ \|\bu^\ve\|^2_{L^2((0,\tau)\times \Gamma_\cI)}\big) + \delta_1 \|c^\ve\|^2_{L^2((0,\tau)\times \Gamma_\cI)}
 \\
 &\leq C \big(1+ \|\bu^\ve\|^2_{L^2(0,\tau; \cW(\Omega))}\big) + \delta \|c^\ve\|^2_{L^2(0,\tau; H^1(\Omega_M^\ve))},
\end{aligned}
\end{equation}
for any fixed $\delta >0$ and the constant $C>0$ depends on $\delta, \beta_c$ and the constant in the trace  inequalities, and is independent of $\ve$.

Testing equation~\eqref{weak_sol_bs} with $b_s^\ve |b_s^\ve|^{q-2}$, for  $q\geq 2$, and using Assumption~\ref{assumptions}.4 on  $r_l$ and $Q_l$, for $l=b,s$,  yield
\begin{equation*}
\begin{aligned}
&\|b^\ve_s (\tau)\|^q_{q} + q d_s\int_0^\tau \!\!\! \|b^\ve_s(t)\|^q_{q} dt + q \rho_s\int_0^\tau \!\!\!\int_{\Omega_M^\ve} \!\! \! Q_s(b^\ve, \be(\bu^\ve)) |b^\ve_s|^q dxdt\\
& \quad \leq
 \|b_{s0}\|^q_{q}  + \beta_b L_{r_b} q \!\int_0^\tau\!\!\! \|b^\ve (t)\|_{q}\|b^\ve_s (t)\|^{q-1}_{q}  dt \leq
  \frac{\beta_b L_{r_b}}{\delta_s^{q-1}} \Big[\int_0^\tau\!\!\! \|b^\ve (t)\|^{\frac q 2}_{q}  dt\Big]^2\\
&\quad\qquad    +  \|b_{s0}\|^q_{q}+  (q-2)\beta_b L_{r_b}\delta_s\! \int_0^\tau\!\! \|b^\ve_s (t)\|^q_{q} dt
 + \beta_b L_{r_b}\delta_s\!\! \!\sup\limits_{t \in(0,\tau)} \|b^\ve_s (t)\|^q_{q},
\end{aligned}
\end{equation*}
for  $\tau\in (0,T]$ and any fixed $\delta >0$, where $L_{r_b}$ is the Lipschitz constant of $r_b$ and we used that, by applying the H\"older and Young inequalities, the following holds
$$
\begin{aligned}
& q \!\int_0^\tau\!\!\|b^\ve \|_{q}\|b^\ve_s \|^{q-1}_{q}  dt \leq
q\Big[\!\int_0^\tau\!\! \|b^\ve \|_{q}^{\frac q2} dt \Big]^{\frac 2 q}\Big[ \int_0^\tau \!\!\|b^\ve_s \|^{\frac{q(q-1)}{q-2}}_{q} \! dt \Big]^{\frac{q-2} q}
\\
& \quad \leq
(q-1) \delta_s  \Big[\int_0^\tau\!\|b^\ve_s \|^q_{q} \,  dt \Big]^{\frac{q-2} {q-1}} \!\sup\limits_{(0,\tau)}\|b^\ve_s \|^{\frac q{q-1}}_{q}
 + \frac 1 {\delta_s^{q-1}} \Big[\int_0^\tau\!\! \|b^\ve \|_{q}^{\frac q2} dt \Big]^2
\\
& \quad \leq
 \frac 1 {\delta_s^{q-1}} \Big[\int_0^\tau\!\! \|b^\ve \|_{q}^{\frac q2} dt \Big]^2
+  (q-2) \delta_s \!\int_0^\tau \!\!\|b^\ve_s \|^q_{q} \,  dt + \delta_s \!\sup\limits_{(0,\tau)}\|b^\ve_s\|^q_{q}.
\end{aligned}
$$
 Thus, using  that $d_s>0$ and choosing $\delta_s =\min\{ d_s, 1\}/(2\beta_b L_{r_b})$, imply
\begin{equation}\label{bs_Lp2}
\begin{aligned}
 \sup\limits_{(0,\tau)}\|b^\ve_s \|^q_{L^q} + q\! \!\int_0^\tau\! \!\|b^\ve_s\|^q_{L^q} dt +  q \!\int_0^\tau\!\!\!\int_{\Omega_M^\ve} \! \!  Q_s(b^\ve, \be(\bu^\ve)) |b^\ve_s|^q dx dt
\leq C_s\Big[1+ \frac 1{\delta_s^{q}}\Big[ \int_0^\tau \!\!\|b^\ve\|^{\frac q 2}_{L^q} dt \Big]^2 \Big],
\end{aligned}
\end{equation}
where $C_s>0$ depends on $d_s, \beta_b, L_{r_b}$, and is independent of $\ve$.
Considering estimate~\eqref{bs_Lp2} with $q=4$ and applying the Gagliardo-Nirenberg inequality~\eqref{trace_GN_ineq_extension} yield
\begin{equation}\label{estim_GN_22}
\begin{aligned}
& \int_0^\tau \!\! \|\be(\bu^\ve)\|_{2}\|b_s^\ve\|_{4}\big[\|p_d^\ve\|_{4} + \|c^\ve\|_{4}\big] dt
\leq C \Big[1+ \Big[ \int_0^\tau \!\!\|b^\ve\|^{2}_{4} dt \Big]^2 \Big]^{\frac 14} \int_0^\tau\!\!\big(\|p_d^\ve\|_{4} + \|c^\ve\|_{4}\big) dt \\
&\quad  \leq
C \Big[1+ \int_0^\tau \!\!\big(\|b^\ve\|^{ 2}_{4} + \|p_d^\ve\|^2_{4} + \|c^\ve\|^2_{4}\big) dt\Big]
\\
&\quad  \leq \delta  \int_0^\tau \!\! \big(\|\nabla p_d^\ve\|^2_{2} +   \|\nabla c^\ve\|^2_{2}  + \|\nabla  b^\ve\|^2_{2}\big)dt  +
C_\delta\Big[ 1 + \int_0^\tau \!\! \big(\|p_d^\ve\|^2_{2} +   \| c^\ve\|^2_{2}   + \| b^\ve\|^2_{2}\big) dt\Big],
\end{aligned}
\end{equation}
for any fixed $\delta >0$, where $C_\delta$ depends on $\delta$, $\delta_s$, $C_s$, on the constant in the Gagliardo-Nirenberg inequality~\eqref{trace_GN_ineq_extension}, and on $\|\be(\bu^\ve)\|_{L^\infty(0,T; L^2(\Omega))}$, which is bounded in terms of~$\bff$ and~$P$,  and is independent of~$\ve$.
Using estimates \eqref{estim_L2_bt} and \eqref{estim_GN_22} in  \eqref{estim_L2}, together with the regularity of initial  conditions and the Gr\"onwall inequality,  yields the estimates for $p_d^\ve$, $c^\ve$ and $b^\ve$ in $L^\infty(0,T; L^2(\Omega_M^\ve))$ and $L^2(0,T; H^1(\Omega_M^\ve))$ and for $b_s^\ve$ in $L^\infty(0,T; L^4(\Omega_M^\ve))$.

Taking  $p_d^\ve |p_d^\ve|^{q-2}$,  $c^\ve |c^\ve|^{q-2}$, and  $b^\ve |b^\ve|^{q-2}$ as test functions in the corresponding equations in~\eqref{weak_sol_n1} yields
\begin{equation}\label{estim_Lq_11}
\begin{aligned}
 &  \|p_d^\ve (\tau)\|^q_{q} + \|c^\ve (\tau)\|^q_{q} +  \|b^\ve (\tau)\|^q_{q} + \frac{4(q-1)}q d\int_0^\tau \!\!\Big[\|\nabla |p_d^\ve|^{\frac q 2}\|^2_{2} +   \|\nabla |c^\ve|^{\frac q 2}\|^2_{2} + \|\nabla |b^\ve|^{\frac q 2}\|^2_{2}\Big] dt \\
 & \leq \|p_{d0}\|^q_{q} + \|c_0 \|^q_{q} +  \|b_0\|^q_{q} + q\!\int_0^\tau\!\! \big\langle |J_c(\bu^\ve)|, |c^\ve|^{q-1} \big\rangle_{\Gamma_\cI} dt  \\
 & \quad  +  C \!\int_0^\tau\!\! \Big[1+ q\|p_d^\ve\|^q_{q} +(q-1)   \|b^\ve\|^q_{q} +
q\big(1+\|\be(\bu^\ve)\|_{2}\big)\|b_s^\ve\|_{{2q}}\big(\|p_d^\ve\|_{{2q}}^{q-1} + \|c^\ve\|_{{2q}}^{q-1}\big)  \Big]dt,
 \end{aligned}
\end{equation}
where $C= 2\max\{\beta_s L_{r_s}, M_{R_e},\beta_d\}$.
 Using the trace inequality~\eqref{trace_GN_ineq_extension} and Assumption~\ref{assumptions}.3 on $J_c$,   the boundary integral is estimated as
\begin{equation}\label{estim_LqGamma}
\begin{aligned}
& q\!\int_0^\tau\! \!\big\langle |J_c(\bu^\ve)|, |c^\ve|^{q-1} \big\rangle_{\Gamma_\cI}   dt\leq  C
 q\!\int_0^\tau\!\!\|\bu^\ve\|_{L^4(\Gamma_{\cI})} \||c^\ve|^{\frac q2}\|^{2\frac{q-1}{q}}_{L^{\frac 83}(\Gamma_{\cI})}\! dt \\
&  \leq C \!\int_0^\tau\!\! \Big[
(q-1)\big[  \|\nabla |c^\ve|^{\frac q 2}\|^{\frac 65}_{2}\||c^\ve|^{\frac q 2}\|^{\frac 4 5}_{2} + \|\nabla |c^\ve|^{\frac q 2}\|^{\frac 3 2}_{2}\||c^\ve|^{\frac q 2}\|^{\frac 1 2}_{2}\big]
\\
& \; + \|\bu^\ve\|^q_{\cW(\Omega)}\Big] dt  \leq   C\!\int_0^\tau\!\!\Big[\|\bu^\ve\|_{\cW(\Omega)}^q  + \delta \||\nabla c^\ve|^{\frac q 2}\|^{2}_{2} +
(1 + q^4)\||c^\ve|^{\frac q 2}\|^2_{2} \Big] dt.
\end{aligned}
\end{equation}
Applying the Gagliardo-Nirenberg inequality~\eqref{trace_GN_ineq_extension} to the last term in~\eqref{estim_LqGamma} we obtain
$$
\begin{aligned}
(1+q^4)\||c^\ve|^{\frac q 2}\|^2_{2}& \leq
C q^4  \big[\|\nabla |c^\ve|^{\frac q 2}\|^{\frac 6 5}_{2} \| |c^\ve|^{\frac q 2}\|^{\frac 4 5}_{L^1}\! + \| |c^\ve|^{\frac q 2}\|^2_{L^1} \big]
 \leq \delta  \|\nabla |c^\ve|^{\frac q 2}\|^2_{2}  + C_\delta  q^{10}  \| |c^\ve|^{\frac q 2}\|^2_{L^1},
\end{aligned}
$$
for any fixed $\delta >0$. Similar calculations imply
\begin{equation}\label{estim_Lq_22}
\begin{aligned}
& q\big[\|p_d^\ve\|^q_{q} + \|b^\ve\|^q_{q}\big] =
q\big[\||p_d^\ve|^{\frac q 2}\|^2_{2} + \||b^\ve|^{\frac q 2}\|^2_{2}\big]
\\
& \quad \leq \delta \big[ \|\nabla |p_d^\ve|^{\frac q 2}\|^2_{2} +\|\nabla |b^\ve|^{\frac q 2}\|^2_{2}\big]
 + C_\delta  q^{\frac 52} \big[ \| |p_d^\ve|^{\frac q 2}\|^2_{L^1}+ \| |b^\ve|^{\frac q 2}\|^2_{L^1}\big],
\end{aligned}
\end{equation}
where  $C_\delta>0$ depends on $\delta$ and the constant in the Gagliardo-Nirenberg inequality~\eqref{trace_GN_ineq_extension} and is independent of $\ve$.
Using the estimate for $b^\ve_s$ in~\eqref{bs_Lp2} and the Gagliardo-Nirenberg inequality gives
\begin{equation*}
\begin{aligned}
& q\!\int_0^\tau \!\!\!\big(1+\|\be(\bu^\ve)\|_{2}\big)\|b_s^\ve\|_{{2q}}\big[\|p_d^\ve\|^{q-1}_{{2q}} + \|c^\ve\|^{q-1}_{{2q}}\big] dt\\
&  \leq q\big[1+ \|\be(\bu^\ve)\|_{L^\infty(0,T;L^2)}\big]
 \! \int_0^\tau\! \!\Big[ C_s+ \frac {C_s}{\delta_s^{2q}}\Big[\int_0^\tau \!\!\! \!\!\|b^\ve\|_{{2q}}^{q}dt \Big]^{2}\Big]^{\frac{1}{2q}} \big[\|p_d^\ve\|^{q-1}_{{2q}} \! + \|c^\ve\|^{q-1}_{{2q}}\big] dt\\
&   \leq \frac {C_u}{\delta_s} \Big[C_s^{\frac 12}\tau\big(1 + \||b^\ve|^{\frac q2}\|^2_{L^2(0,\tau;L^4)}\big)  + (q-1)\big(\||p_d^\ve|^{\frac q2}\|^2_{L^2(0,\tau;L^4)} \! + \||c^\ve|^{\frac q2}\|^2_{L^2(0,\tau;L^4)}\big)\Big]  \\
&\leq  C_\delta\Big[1 + \||b^\ve|^{\frac q2}\|^2_{L^2(0,\tau;L^1)}  + q^{10}\big(\||p_d^\ve|^{\frac q2}\|^2_{L^2(0,\tau;L^1)}  + \||c^\ve|^{\frac q2}\|^2_{L^2(0,\tau;L^1)}\big)\Big]
\\ & \quad +
\delta\big[   \|\nabla|p_d^\ve|^{\frac q2}\|^2_{L^2(\Omega_{M,\tau}^\ve)}  + \|\nabla|c^\ve|^{\frac q2}\|^2_{L^2(\Omega_{M,\tau}^\ve)}+ \|\nabla|b^\ve|^{\frac q2}\|^2_{L^2(\Omega_{M,\tau}^\ve)}\big],
\end{aligned}
\end{equation*}
for $\tau \in 0,T]$ and any fixed $\delta >0$, where $C_u = 1+ \|\be(\bu^\ve)\|_{L^\infty(0,T;L^2(\Omega))}$.
 Combining the last estimate, \eqref{estim_LqGamma}, and \eqref{estim_Lq_22} 
 in the estimate~\eqref{estim_Lq_11}, choosing $\delta>0$ sufficiently small and subtracting the terms $\delta \||\nabla p_d^\ve|^{\frac q 2}\|^2_{L^2}$,  $\delta\||\nabla c^\ve|^{\frac q 2}\|^2_{L^2}$, and $\delta\||\nabla b^\ve|^{\frac q 2}\|^2_{L^2}$ on the left-hand side, yield
\begin{equation}
\begin{aligned}
&   \int_0^\tau\! \! \Big[\||\nabla p_d^\ve|^{\frac q 2}\|^2_{2} \!+ \||\nabla c^\ve|^{\frac q 2}\|^2_{2} \!  + \||\nabla b^\ve|^{\frac q 2}\|^2_{2}\Big]dt  + \|p_d^\ve (\tau)\|^q_{q} + \|c^\ve (\tau)\|^q_{q} + \|b^\ve (\tau)\|^q_{q}\\
& \leq  C_1\big[1+  \|\bu^\ve\|_{L^\infty(0,T; \cW(\Omega))}^q\big] + C_0^q +C_2 q^{10}\!\! \int_0^\tau \!\!\Big[\||b^\ve|^{\frac q 2} \|^2_{L^{1}} + \||p_d^\ve|^{\frac q 2}\|^2_{L^{1}} + \||c^\ve|^{\frac q 2}\|^2_{L^{1}}\Big]dt,
 \end{aligned}
\end{equation}
where $C_0^q = \|p_{d0}\|^q_{L^q} + \|c_0 \|^q_{L^q} +  \|b_0\|^q_{L^q}$.
Thus
$$
\begin{aligned}
  \|p_d^\ve (\tau)\|^q_{q} + \|c^\ve (\tau)\|^q_{q} + \|b^\ve (\tau)\|^q_{q} & \leq C_0^q + C_1 \big[1+ \|\bu^\ve\|_{L^\infty(0,T; \cW(\Omega))}^q\big] \\
 & + \tilde C_2 q^{10} \Big(\sup\limits_{(0,\tau)} \big[ \|p_d^\ve \|^{\frac q2}_{{\frac q2}} + \|c^\ve \|^{\frac q2}_{{\frac q2}} + \|b^\ve \|^{\frac q2}_{{\frac q2}}\big]\Big)^2,
 \end{aligned}
$$
for $\tau \in 0,T]$, where $\tilde C_2 = \max\{ 1, T C_2\}$. Considering $q=2^k$,  for $k=1, 2,\ldots$, and  using similar recursive iterations as in~\cite[Lemma 3.2]{Alikakos}, applied to
$w_q(\tau) =\|p_d^\ve (\tau)\|^q_{q} + \|c^\ve (\tau)\|^q_{q} + \|b^\ve (\tau)\|^q_{q}$, satisfying
$$
w_q (\tau) \leq  \tilde C_2 q^{10} \big[\sup\limits_{(0,\tau)} w_{q/2}\big]^2 + C_3 \big[1+\|p_{d0}\|_{q} + \|c_0 \|_{q} +  \|b_0\|_{q} +\|\bu^\ve\|_{L^\infty(0,T; \cW(\Omega))} \big]^q ,
$$
for $\tau \in (0,T]$ and $C_3 = \max\{ 1, C_1\}$,  we obtain
\begin{equation}\label{ineq:q_iter_1}
\begin{aligned}
& \big(\|p_d^\ve (\tau)\|_{q} + \|c^\ve (\tau)\|_{q} + \|b^\ve (\tau)\|_{q}\big)^q
\leq 2^{q-1}( \|p_d^\ve (\tau)\|^q_{q} + \|c^\ve (\tau)\|^q_{q}
 + \|b^\ve (\tau)\|^q_{q})
 \\
& \quad  \leq 2^{q-1} \tilde C_2^{q-1} 2^{20q} 2^{\frac q2-1}
 \big[ \|p_d^\ve \|_{L^\infty(0,\tau;L^1)} + \|c^\ve \|_{L^\infty(0,\tau; L^1)}
  + \|b^\ve \|_{L^\infty(0,\tau; L^1)} \big]^q
\\
& \qquad + 2^{q-1} \tilde C_2^{\frac q2-1}C_3^{\frac q2}2^{20 q} 2^{q-2} \big[1+\|p_{d0}\|_{q}
  + \|c_0 \|_{q} +  \|b_0\|_{q} +\|\bu^\ve\|_{L^\infty(0,T; \cW(\Omega))} \big]^q ,
 \end{aligned}
\end{equation}
for $\tilde C_2, C_3\geq 1$.
Applying the $q$th root and taking $q \to \infty$
imply estimates  for $p^\ve_d, c^\ve$ and $b^\ve$ in $L^\infty((0,T)\times \Omega_M^\ve)$. Then considering~\eqref{bs_Lp2}, we also obtain the estimate  for $b_s^\ve$ in~$L^\infty((0,T)\times \Omega_M^\ve)$. Notice that by using the extension properties, see Lemma~\ref{extension_local}, and applying the embedding inequalities to extended functions, all constants in the estimates above are independent of~$\ve$.

To show the equicontinuity of $p_\alpha^\ve$,   $c^\ve$, and $b^\ve$  with respect to  the time variable,  we consider $\phi_j(t,x)= \int_{t-h}^t  v_j(s, x) \kappa(s) ds$, with $\kappa(s) = 1$ for $s \in (0, T - h)$ and $\kappa(s) = 0$ for $s \in [-h, 0]\cup [T-h,T]$ and  $j=e,E, d,c,b$, as   test functions in~\eqref{weak_sol_n1}, where $v_\alpha = \widetilde p^\ve_\alpha = \theta_h p^\ve_\alpha - p^\ve_\alpha$, for $\alpha = e,E,d$,   $v_c = \widetilde c^\ve =\theta_h c^\ve - c^\ve$, and  $v_b=\widetilde b^\ve=\theta_h b^\ve - b^\ve$, respectively, and obtain
 \begin{equation*}
\begin{aligned}
& \sum_{\alpha=e,E,d}\!\!\!\!\| \widetilde p^\ve_\alpha \|^2_{2} +  \| \widetilde c^\ve \|^2_{2} + \| \widetilde b^\ve \|^2_{2}\leq
D \Big[\!\!\!\!\sum_{\alpha=e,E,d}\! \!\!\!\Big\langle\! \int_t^{t+h}\! \!\!\!\!\!|\nabla p_\alpha^\ve(s)|   ds, |\nabla \widetilde p_\alpha^\ve| \Big \rangle
 +  \Big\langle \!\int_t^{t+h}\! \!\!\!\!\!|\nabla c^\ve(s)|  ds,  |\nabla \widetilde c^\ve| \Big \rangle \\
&  \quad +  \Big\langle\! \int_t^{t+h}\!\!\! \!\!\!|\nabla b^\ve(s)|  ds,  |\nabla \widetilde b^\ve| \Big \rangle \Big]
  +
\Big \langle \!\int_t^{t+h}\!\!\! \big[R_{d}( p_d^\ve, c^\ve) +  R_s(b^\ve, b_s^\ve, \be(\bu^\ve))\big]  ds,  2|\widetilde p_d^\ve| +  |\widetilde c^\ve|   \Big\rangle
\\ &
\quad +  \Big\langle \!\int_t^{t+h}\!\!\! R_{e}(p_e^\ve, p_E^\ve) ds, |\widetilde p^\ve_e| + |\widetilde p^\ve_d| \Big\rangle
    +  \Big \langle \!\int_t^{t+h}\!\!\!\!\! \big[R_{d}( p_d^\ve, c^\ve) +  R_b(b^\ve,\be(\bu^\ve) ) + d_b b^\ve \big] ds,  |\widetilde b^\ve| \Big\rangle \\
& \quad  +  \sum_{\alpha=e,E,d}\!\!\!\Big\langle\! \int_t^{t+h}\!\!\! \!\! \big[J_\alpha(p_e^\ve, p_d^\ve, b^\ve)  + \gamma_\alpha p^\ve_\alpha \big]ds,
    |\tilde p^\ve_\alpha|  \Big\rangle_{\Gamma_{\cI, \tau} }  +    \Big\langle\! \int_t^{t+h} \!\!\!\!\!\big[ J_c(\bu^\ve) + \gamma_c c^\ve(s,x) \big] ds,
    |\tilde c^\ve|  \Big\rangle_{\Gamma_{\cI, \tau} },
\end{aligned}
\end{equation*}
for  $\tau \in (0, T-h]$ and any $h>0$, where $D=\max\{D_e, D_E, D_d, D_c, D_b\}$, $J_d =0$, and we  omit $\Omega^\ve_{M,\tau}$ in $\langle \cdot, \cdot\rangle$. Then  the boundedness of $p_e^\ve$, $p_E^\ve$, $b^\ve_s$, and the  estimates for $\bu^\ve$, $p^\ve_\alpha$, $c^\ve$, and $b^\ve$, with $\alpha=e,E,d$, imply the result. Equation~\eqref{weak_sol_bs}, together with Assumption~\ref{assumptions}.4 on $r_l$ and $Q_l$, for $l=b,s$,  yields
$$
\begin{aligned}
\|\partial_t b^\ve_s\|^2_{L^2(\Omega_{M,T}^\ve)} \leq C \big[1+  \|\be(\bu^\ve)\|^2_{L^2(\Omega_{M,T}^\ve)}\|b^\ve_s\|^2_{L^\infty(\Omega_{M,T}^\ve)} + \|b^\ve\|^2_{L^2(\Omega_{M,T}^\ve)} + \|b^\ve_s\|^2_{L^2(\Omega_{M,T}^\ve)} \big] \leq C.
\end{aligned}
$$
To prove equicontinuity of $\bu^\ve$ with respect to the time variable, we consider the difference of~\eqref{weak_sol_u} for $t+h$ and $t$. Taking $\bu^\ve(t+h,x) - \bu^\ve(t,x)$ as a test function,   using Assumption~\ref{assumptions}.6 on $P$ and $\bff$, and applying the trace and the Korn inequalities,  yield
\begin{equation}\label{estim_h_u}
 \begin{aligned}
 & \|\bu^\ve(t+h) -   \bu^\ve(t) \|^2_{\cW(\Omega)}  \leq C\big[\|\bff(t+h) - \bff(t)\|^2_{L^2(\Gamma_\cI)}\\
 & \quad + \|P(t+h) - P(t)\|^2_{L^2(\Gamma_\cI)}\! +  \|b^\ve(t+h) - b^\ve(t)\|_{L^\infty(\Omega_M^\ve)}^2\|\be(\bu^\ve(t)) \|^2_{2}\big]\\
  & \leq C \big[h^{2\sigma} + \|b^\ve(t+h) - b^\ve(t)\|_{L^\infty(\Omega_M^\ve)}^2 \big],
 \end{aligned}
\end{equation}
for  $t\in [0, T-h]$. To estimate $\|b^\ve(t+h) - b^\ve(t)\|_{L^\infty(\Omega_M^\ve)}$, we consider
$\widetilde b^\ve |\widetilde b^\ve|^{q-2}$  as a test function in the difference of equation for $b^\ve$ in \eqref{weak_sol_n1} for $t+h$ and $t$, with $t \in (0, T-h]$,  where $\widetilde b^\ve(t,x) = b^\ve(t+h, x) - b^\ve(t,x)$, and using assumptions on $R_d$, $r_b$, and $Q_b$ in Assumptions~\ref{assumptions}.2 and~\ref{assumptions}.4, obtain
\begin{equation}\label{estim_tilde_q1}
 \begin{aligned}
   \frac d{dt}  \|\widetilde b^\ve \|^q_{q}   + \|\nabla |\widetilde b^\ve|^{\frac q 2}\|^2_{2}
 \leq C\big( q\big[\|\be(\widetilde\bu^\ve)\|_{2}+   \|\widetilde p_d^\ve\|_{2}  +  \|\widetilde c^\ve\|_{2}  \big] \||\widetilde b^\ve|^{q-1} \|_{2}  \qquad
 \\ +  q\big[1+\|\be(\bu^\ve) \|_{2}\big] \||\widetilde b^\ve|^{q} \|_{2} \big) = C( I_1 + I_2),
 \end{aligned}
 \end{equation}
 where $\widetilde \bu^\ve(t,x) = \bu^\ve(t+h, x) - \bu^\ve(t, x)$.
Using $1<\frac {q(1+\varsigma)}{q \varsigma + 1} \leq 1+ \frac 1 \varsigma$ for some  $0<\varsigma <1$ and $q\geq 2$, implies
 \begin{equation} \label{estim_diff_11}
\begin{aligned}
 & \int_0^\tau \!\!\! I_1 d t
\leq q \Big[\! \int_0^\tau \!\!\! \!\big[ \|\widetilde p_d^\ve \|^{\kappa}_{2}+ \|\widetilde c^\ve \|^{\kappa}_{2}+ \|\be(\widetilde \bu^\ve) \|^{\kappa}_{2}\big] dt
\Big]^{\frac 1 \kappa}
\Big[\! \int_0^\tau  \!\!\!\! \!\| |\widetilde b^\ve|^{\frac q 2}\|^{2(1+\varsigma)}_{4}\! dt \Big]^{\frac {q-1}{(1+\varsigma) q}}
\\
& \leq \Big[\|\widetilde p_d^\ve \|_{L^{\tilde \kappa}(0,\tau; L^2)}^q \! +  \|\widetilde c^\ve \|_{L^{\tilde \kappa}(0,\tau; L^2)}^q \! +  \|\be(\widetilde \bu^\ve) \|_{L^{\tilde \kappa}(0,\tau; L^2)}^q \Big]
  +  (q-1)
\Big[ \int_0^\tau\!\!\! \| |\widetilde b^\ve|^{\frac q 2}\|^{2(1+\varsigma)}_{4} \! dt \Big]^{\frac 1{1+\varsigma}} \;,
 \end{aligned}
\end{equation}
where $\kappa = \frac{q(1+\varsigma)}{q\varsigma +1}$ and $\tilde \kappa = 1 + \frac 1 \varsigma$.  Applying the Gagliardo-Nirenberg inequality yields
 \begin{equation}\label{etim_GN_L4}
 \| |\widetilde b^\ve|^{\frac q 2}\|^{2}_{L^4}       \leq C\big[ \| \nabla |\widetilde b^\ve|^{\frac q 2}\|^{2 a }_{L^2}
     \| |\widetilde b^\ve|^{\frac q 2}\|^{2(1-a)}_{L^1} +  \| |\widetilde b^\ve|^{\frac q 2}\|^{2}_{L^1}\big] ,
 \end{equation}
 where  $a =   9/{10} $ for a three-dimensional domain.   Considering $\varsigma$ such that $a(1+ \varsigma) < 1$ we obtain
 \begin{equation}\label{complex_in}
\begin{aligned}
 &\Big[\!\int_0^\tau \!\!\!\! \||\widetilde b^\ve|^{\frac q 2}\|^{2(1+\varsigma)}_{4} dt\Big]^{\frac 1{1+\varsigma}}\!\!
\!\leq C_1 \| \nabla |\widetilde b^\ve|^{\frac q 2}\|^{2a}_{2}
\Big[\! \int_0^\tau \!\!\! \!\| |\widetilde b^\ve|^{\frac q 2}\|^{\frac{2(1+\varsigma)(1-a)}{1- a (1+\varsigma)}}_{L^1} \! dt\Big]^{\frac{1- a (1+\varsigma)}{1+\varsigma}}\!\!\!\!\! \!\!+ C_2 \Big[\!\int_0^\tau \!\!\!\! \||\widetilde b^\ve|^{\frac q 2}\|^{2(1+\varsigma)}_{L^1} dt\Big]^{\frac 1{1+\varsigma}}
\\
&\leq  \frac \delta  q \| \nabla |\widetilde b^\ve|^{\frac q 2}\|^{2}_{L^2}
 + C_\delta \, q^{\frac{ a}{(1-a)}}\Big[ \int_0^\tau\!\!\!  \| |\widetilde b^\ve|^{\frac q 2}\|^{\frac{2(1+\varsigma)(1-a)}{1-a(1+\varsigma)}}_{L^1} dt \Big]^{\frac {1- a(1+\varsigma)}{(1-a)(1+\varsigma)}} + \Big[\int_0^\tau \!\!\!\! \||\widetilde b^\ve|^{\frac q 2}\|^{2(1+\varsigma)}_{L^1} dt\Big]^{\frac 1{1+\varsigma}},
 \end{aligned}
\end{equation}
for  arbitrary fixed $\delta>0$,  and hence
\begin{equation*}
\begin{aligned}
 \int_0^\tau \!\!\!\! I_1 dt
   \leq   \Big[\|\be(\widetilde \bu^\ve) \|_{L^{1+\frac 1 \varsigma}(0,\tau; L^2)}^q+ \|\widetilde p_d^\ve \|_{L^{1+\frac 1 \varsigma}(0,\tau; L^2)}^q+\|\widetilde c^\ve \|_{L^{1+\frac 1 \varsigma}(0,\tau; L^2)}^q \Big]\\
   +
 \delta  \frac{ q-1}{q} \| \nabla |\widetilde b^\ve|^{\frac q 2}\|^2_{L^2(0, \tau, L^2)}   +
 C q^{10} \Big[\sup_{(0,\tau)} \| |\widetilde b^\ve|^{\frac q 2}\|_{L^1}\Big]^2,
 \end{aligned}
 \end{equation*}
 for  $\tau \in (0,T-h]$.  The Gagliardo-Nirenberg inequality also ensures
 \begin{equation}\label{estim_ub_I2}
 \begin{aligned}
\int_0^\tau \!\!  I_2 dt
 \leq  C q^{10}\big[1+\|\be(\bu^\ve)\|^{9}_{L^\infty(0,\tau; L^2)}\big]
 \||\widetilde b^\ve|^{\frac q 2}\|^2_{L^2(0,\tau; L^1)}
  +  \delta   \|\nabla |\widetilde b^\ve|^{\frac q 2}\|^2_{L^2(0, \tau; L^2)}.
  \end{aligned}
\end{equation}
Combining the estimates from above and integrating~\eqref{estim_tilde_q1} over $(0, \tau)$, we obtain
 \begin{equation}\label{estim_tilde_b}
 \begin{aligned}
  \|\widetilde b^\ve(\tau) \|^q_{q} \leq \|\widetilde b^\ve(0) \|^q_{q} + C\Big[
 \|\be(\widetilde \bu^\ve) \|_{L^{1+\frac 1 \varsigma}(0,\tau; L^2)}^q \!\! + \|\widetilde p_d^\ve \|_{L^{1+\frac 1 \varsigma}(0,\tau; L^2)}^q\\
 +\|\widetilde c^\ve \|_{L^{1+\frac 1 \varsigma}(0,\tau; L^2)}^q
 \!\! +  q^{10}\big(\sup_{(0,\tau)} \| |\widetilde b^\ve|^{\frac q 2}\|_{L^1}\big)^2\Big].
 \end{aligned}
 \end{equation}
Iterations in $q$, with $q = 2^\kappa$ for  $\kappa=2,3,\ldots$,  as in~\cite[Lemma~3.2]{Alikakos} and~\eqref{ineq:q_iter_1},  imply
\begin{equation*}
\begin{aligned}
 \|\widetilde b^\ve(\tau) \|^q_{q}
  \leq    C^q 2^{22 q}  \big[
  \| \be(\widetilde \bu^\ve)\|_{L^{1+ \frac 1 \varsigma}(0,\tau;L^2)} \! +
   \|\widetilde p_d^\ve \|_{L^{1+\frac 1 \varsigma}(0,\tau; L^2)}\!\!+\|\widetilde c^\ve \|_{L^{1+\frac 1 \varsigma}(0,\tau; L^2)}\\
   + \|\widetilde b^\ve(0) \|_{q} + \|\widetilde b^\ve \|_{L^\infty(0,\tau; L^2)} \big]^q,
\end{aligned}
\end{equation*}
for $\tau \in (0, T-h]$ and $9(1+\varsigma) <10$.  Considering $q=2$ in \eqref{estim_tilde_q1} and using \eqref{estim_h_u} and
$$
\begin{aligned}
& \|\widetilde p_d^\ve\|_{L^2(\Omega_{M,\tau}^\ve)}  +  \|\widetilde c^\ve\|_{L^2(\Omega_{M,\tau}^\ve)} + \|\widetilde b^\ve \|_{L^2(\Omega_{M,\tau}^\ve)} \leq C h^{\frac 14},\\
& \|\widetilde b^\ve \|_{L^4} \leq C \big[ \| \nabla\widetilde b^\ve\|^{3/4 }_{L^2}
     \| \widetilde b^\ve\|^{1/4}_{L^2} +  \| \widetilde b^\ve\|_{L^1}\big] \leq \delta \| \nabla\widetilde b^\ve\|_{L^2}
     + C_\delta \| \widetilde b^\ve\|_{L^2},
 \end{aligned}
 $$
for any fixed $\delta>0$,  we obtain
 $$
 \|\widetilde b^\ve \|^2_{L^\infty(0, \tau; L^2(\Omega_M^\ve))} \leq  \|\widetilde b^\ve(0) \|^2_{L^2(\Omega_M^\ve)} +
 C \big(h^{\frac 12} + h^{2 \sigma} +  \|\widetilde b^\ve \|^2_{L^2(0,\tau; L^\infty(\Omega_M^\ve))}\big).
 $$
Taking the $1/q$-power and letting $q \to \infty$, and then  using \eqref{estim_h_u}, the estimates for $\widetilde p_d^\ve$, $\widetilde c^\ve$ in $L^2(\Omega_{M,T}^\ve)$, and  boundedness of $p_d^\ve$, $c^\ve$, yield
\begin{equation}\label{estim_equi_b_L_infty}
\begin{aligned}
  \|\widetilde b^\ve \|_{L^\infty(\Omega_{M,\tau}^\ve)}
  \leq   C \big[\| \be(\widetilde \bu^\ve)\|_{L^{1+ \frac 1 \varsigma}(0,\tau;L^2)}
  + \|\widetilde p_d^\ve \|_{L^{1+\frac 1 \varsigma}(0,\tau; L^2)} \qquad \qquad
\\
 +\|\widetilde c^\ve \|_{L^{1+\frac 1 \varsigma}(0,\tau; L^2)}
  +\|\widetilde b^\ve \|_{L^\infty(0,\tau; L^2)}   + \|\widetilde b^\ve(0) \|_{L^\infty(\Omega_M^\ve)}  \big]
 \\
  \leq C\big[(\tau^{\frac 12} + \tau^{\frac \varsigma{\varsigma +1}})\|\widetilde b^\ve \|_{L^\infty(\Omega_{M,\tau}^\ve)}+ \|\widetilde b^\ve(0) \|_{L^\infty(\Omega_M^\ve)} + h^{\frac \varsigma{2(\varsigma +1)}}+ h^{\frac 14} + h^\sigma \big]  ,
  \end{aligned}
\end{equation}
for $\tau \in (0, T-h]$ and $\varsigma \in (0, 1/9)$. Considering $|b^\ve(t) - b_0|^{q-2}(b^\ve(t) -b_0) $ as a test function in the equation for $b^\ve$ in \eqref{weak_sol_n1} and integrating over $(0,h)$ gives
$$
\begin{aligned}
& \|\hat b^\ve(h) \|^q_{q} +  \frac{d (q-1)} q\|\nabla |\hat b^\ve|^{\frac q2}\|_{L^2(0,h;L^2)}^2
\\
& \leq D^2_b q^2 \Big[\! \int_0^h \!\! \|\nabla b_0 \|_{4}^{2\kappa_1} dt\Big]^{\frac 1 {\kappa_1}}
\Big[\! \int_0^h \!\! \| \hat b^\ve(t)|^{\frac q2}\|_{4}^{2(1+\zeta)} d t\Big]^{\frac{q-2}{q(1+\zeta)}}
+ q \Big[\!\int_0^h \!\! \!\big(d_b^\kappa\|b_0 \|_{2}^\kappa  \\
& \quad + \|R_d(p^\ve_d, c^\ve)\|_{2}^\kappa+ \|R_b(b^\ve, \be(\bu^\ve))\|_{2}^\kappa \big)dt\Big]^{\frac 1 \kappa}
\Big[\!\int_0^h \!\!\| \hat b^\ve(t)|^{\frac q2}\|_{4}^{2(1+\zeta)} d t\Big]^{\frac{q-1}{q(1+\zeta)}}
\\
& \leq  C_1 \big[h^{\frac q {2\kappa_1}} \|\nabla b_0 \|^q_{4} + h^{\frac q \kappa} \|b_0 \|^q_{2} + \|\be(\bu^\ve) \|_{L^{\kappa}(0,h; L^2)}^q
+ \| p_d^\ve \|_{L^{\kappa}(0,h; L^2)}^q
\\
& \quad +\|c^\ve \|_{L^{\kappa}(0,h; L^2)}^q \big]
  +C_2\big[(q-2) q^{1+ \frac 2{q-2}}+  (q-1)\big] \Big[\!\int_0^h \!\!\| \hat b^\ve(t)|^{\frac q2}\|_{4}^{2(1+\zeta)} d t\Big]^{\frac 1{(1+\zeta)}},
 \end{aligned}
$$
where $\hat b^\ve(t) =b^\ve(t) -b_0$,  $\kappa_1 = (1+ \zeta) q/(q\zeta + 2)$, $\kappa = (1+ \zeta) q/(q\zeta + 1)$, $q\geq 4$, and $0<\zeta <1/9$.
Then similar calculations as  in \eqref{complex_in}, assumptions on the initial condition $b_0$,  and  a priori estimates for $\bu^\ve$, $p_d^\ve$ and $c^\ve$  yield
$$
\begin{aligned}
& \||\hat b^\ve(h)|^{\frac q 2} \|^2_{L^2} +  \|\nabla |\hat b^\ve(t)|^{\frac q2}\|_{L^2(0,h;L^2)}^2
 \\
 & \qquad \leq C \Big[ C^q\big(h^{\frac q {\kappa}}+ h^{\frac q {2\kappa_1}}\big) + (q^{10} + q^{30}) h^{\frac{1-9\varsigma}{1+\varsigma}} \big(\sup_{(0,h)} \| |\hat b^\ve|^{\frac q 2}\|_{L^1}\big)^2 \Big],
 \end{aligned}
$$
for $h\in (0,1)$. Iterating over $q=2^k$, for $k=2,3,\ldots$,  as in~\cite[Lemma~3.2]{Alikakos} and~\eqref{ineq:q_iter_1},  we obtain
$$
\|\tilde b^\ve(0) \|_{L^\infty(\Omega^\ve_{M})}=\|\hat b^\ve(h) \|_{L^\infty(\Omega^\ve_{M})}\leq C \big(h^{\frac \zeta {1+ \zeta}}+ h^{\frac \zeta {2(1+ \zeta)}}
+ h^{\frac{1-9 \zeta} {1+ \zeta}} \big).
$$
Iterating in~\eqref{estim_equi_b_L_infty} over time-intervals such that
$C\tau^{\frac \varsigma{\varsigma +1}} \leq 1/2$, we obtain
$$
\|\theta_h b^\ve - b^\ve \|_{L^\infty((0,T-h)\times \Omega_M^\ve)} \leq C \big(h^{\frac \varsigma{2(\varsigma +1)}} +  h^{\frac{1-9 \zeta} {1+ \zeta}} + h^\sigma \big).
$$
Then, estimate~\eqref{estim_h_u}  ensures
$$
\|\theta_h \bu^\ve - \bu^\ve\|_{L^2(0,T-h; \cW(\Omega))} \leq  C \big(h^{\frac \varsigma{2(\varsigma +1)}} +  h^{\frac{1-9 \zeta} {1+ \zeta}} \big) \; \text{ for }   \varsigma \in (0, 1/9) \text{ and } \sigma \geq 1/18,
$$
which proves the second estimate in the lemma.
\end{proof}

\begin{theorem}\label{th:existence_micro}
 Under Assumption~\ref{assumptions},  there exists a unique solution of problem~\eqref{EQST}--\eqref{IC}.
\end{theorem}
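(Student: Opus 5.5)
The plan is a Banach fixed-point argument on the coupling variable $b^\ve$ (equivalently, on the displacement), for fixed $\ve$, together with the a priori estimates of Lemma~\ref{estim:apriori}. Fix $\ve$ and, for $M>0$ to be chosen, consider the closed set
$$\mathcal K=\{\hat b\in L^\infty((0,T)\times\Omega_M^\ve):\ 0\le \hat b\le M\}$$
with the metric of $L^\infty(0,\tau;L^2(\Omega_M^\ve))$, or alternatively of $L^2(0,\tau;L^q)$ for large $q$. The map $\mathcal F$ is built in four steps.
\begin{enumerate}
\item Given $\hat b\in\mathcal K$, solve the elasticity problem~\eqref{weak_sol_u} with tensor $\bbE^\ve(x,\hat b(t))$ for a.a.\ $t$. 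Lax--Milgram and Korn's inequality~\eqref{Korn_in} give a unique $\bu=\bu(\hat b)\in L^\infty(0,T;\cW(\Omega))$, with a bound independent of $\hat b$ thanks to uniform ellipticity.
\item With $\be(\bu)$ fixed, solve the ODE~\eqref{weak_sol_bs} and the reaction--diffusion system~\eqref{weak_sol_n1}. Use $\hat b$ in the arguments of $R_b,R_s$, and keep $b$ as the unknown in the linear and diffusive terms. The nonlinearities $R_e,R_d$ and the boundary terms $J_\alpha$ are locally Lipschitz, so after truncation at the $L^\infty$ bounds a Galerkin argument (or a standard result for semilinear parabolic systems) gives a unique solution.
\item Set $\mathcal F(\hat b)=b$.
\item Obtain the $L^\infty$ bounds by rerunning the Alikakos iteration of Lemma~\ref{estim:apriori}. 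These bounds depend only on $\|\bu\|_{L^\infty(0,T;\cW)}$ and the data, not on $M$. This allows a choice of $M$ with $\mathcal F(\mathcal K)\subset\mathcal K$, so the truncation is inactive. Nonnegativity follows from the invariant-region argument.
\end{enumerate}

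The contraction estimate is the main step. Take $\hat b_1,\hat b_2\in\mathcal K$ with corresponding solutions $\bu_j,b_{s,j},p_{\alpha,j},c_j,b_j$.

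\emph{Elasticity difference.} Subtracting the equations~\eqref{weak_sol_u} and testing with $\bu_1-\bu_2$ gives
$$\|\be(\bu_1-\bu_2)\|_{L^2(\Omega)}\le C\|\hat b_1-\hat b_2\|_{L^\infty(\Omega_M^\ve)}\,\|\be(\bu_2)\|_{L^2(\Omega)},$$
using the $C^1$ (locally Lipschitz) dependence of $\bbE_M$ on $b$. This loss of one integrability level explains why the fixed-point norm must control $L^\infty$ in $x$. Alternatively, one can use Meyers-type higher integrability of $\nabla\bu$ to trade $L^\infty$ for $L^p$.

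\emph{ODE difference.} For $b_s$, subtract the ODEs and use the Lipschitz properties of $r_l,Q_l$ from Assumption~\ref{assumptions}.4, the monotonicity of $r_s$, and the boundedness of $b_{s,j}$. Testing with $|b_{s,1}-b_{s,2}|^{q-2}(b_{s,1}-b_{s,2})$ gives a bound for $b_{s,1}-b_{s,2}$ in $L^\infty(0,\tau;L^q)$ by $\hat b_1-\hat b_2$ and $\be(\bu_1-\bu_2)$.

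\emph{Parabolic differences.} For $p_\alpha,c,b$, test the differences with $|v|^{q-2}v$. The boundary term $J_c(\bu_1)-J_c(\bu_2)$ is handled by the trace inequalities~\eqref{trace_GN_ineq_extension}, exactly as in~\eqref{estim_LqGamma}. Then iterate in $q=2^k$ as in the proof of Lemma~\ref{estim:apriori}, see~\eqref{estim_tilde_b}--\eqref{estim_equi_b_L_infty}. The outcome should be
$$\|b_1-b_2\|_{L^\infty((0,\tau)\times\Omega_M^\ve)}\le C\big(\tau^{1/2}+\tau^{\frac{\varsigma}{1+\varsigma}}\big)\|\hat b_1-\hat b_2\|_{L^\infty((0,\tau)\times\Omega_M^\ve)}.$$
The key point is that the quadratic terms $r_s(b_s)Q_s(b,\be(\bu))$ are only linear in $\be(\bu)\in L^2$. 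They are therefore controlled through the $L^2$-norm of the displacement-gradient difference, times $L^\infty$ bounds of the other factors, via the $L^4$ Gagliardo--Nirenberg interpolation with exponent $a=9/10$, as in~\eqref{etim_GN_L4}--\eqref{complex_in}.

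For $\tau$ small, $\mathcal F$ is a contraction on $\mathcal K$ in the $L^\infty((0,\tau)\times\Omega_M^\ve)$ metric. Completeness holds because $\mathcal K$ is closed in that metric. The constants depend only on the uniform bounds of Lemma~\ref{estim:apriori}, which hold on the whole interval $[0,T]$, so $\tau$ is uniform and the argument restarts finitely many times to cover $(0,T)$.

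Uniqueness follows from the same difference estimate applied to two solutions, combined with Gr\"onwall's inequality. Existence of the fixed point gives a weak solution in the sense of the definition. The expected obstacle is the $L^\infty$ contraction bound for $b$, because of the loss from $L^\infty$ to $L^2$ in the elasticity step. If the Alikakos iteration does not close with a small factor, the fallback is to work in $L^\infty(0,\tau;L^q)$ with large $q$ and use higher integrability of $\nabla\bu$.
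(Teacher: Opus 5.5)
Your architecture matches the paper's: Lax--Milgram and Korn for $\bu$ with $\|\be(\bu_1-\bu_2)\|_{L^\infty(0,T;L^2)}\le C\|\hat b_1-\hat b_2\|_{L^\infty}$, then $L^2$ difference estimates, then Alikakos iteration giving $\|b_1-b_2\|_{L^\infty((0,\tau)\times\Omega_M^\ve)}\le C\|\be(\bu_1-\bu_2)\|_{L^{1+1/\varsigma}(0,\tau;L^2)}\le C\tau^{\varsigma/(1+\varsigma)}\|\hat b_1-\hat b_2\|_{L^\infty}$, then restarting in time. However, two steps fail as written.

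\textbf{Freezing $\hat b$ in the reaction terms breaks step~4.} You put $\hat b$ into $R_b(\hat b,\be(\bu))=r_b(\hat b)Q_b(\hat b,\be(\bu))$, so the sink in the $b$-equation no longer vanishes at $b=0$.
\begin{itemize}
\item \emph{Nonnegativity fails.} Testing with $b^-=\max\{-b,0\}$ leaves the forcing term $\int_{\Omega_M^\ve} r_b(\hat b)\,Q_b(\hat b,\be(\bu))\,b^-\,dx\geq 0$ on the right-hand side. So $b$ can become negative wherever $\hat b>0$, $Q_b>0$ and $R_d(p_d,c)=0$, and the invariant-region argument does not apply. This matters because the ellipticity of $\bbE_{M_j}(\xi)$ and the bounds on $Q_l$ in Assumption~\ref{assumptions} hold only for $\xi\geq 0$. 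Hence $\mathcal F(\mathcal K)\subset\mathcal K$ fails.
\item \emph{The $L^\infty$ bounds now depend on $M$.} The source $r_b(\hat b)Q_b\le L_{r_b}\beta_b M$ drives $b_s$, which drives $p_d,c$ through $R_s$, which drives $b$ through $R_d$. You only get $\|b\|_{L^\infty}\le C_0+C_1(T)M$, because you have cut the loop $b\to b_s\to(p_d,c)\to b$ that Lemma~\ref{estim:apriori} closes internally.
\end{itemize}
The paper avoids both problems by substituting $\hat b$ only into the elasticity tensor. For the resulting $\bu$, it solves the full reaction--diffusion--ODE system with the unknown $b$ kept in $R_b$ and $R_s$. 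That system is quasi-positive, and its bounds from Lemma~\ref{estim:apriori} depend only on $\|\bu\|_{L^\infty(0,T;\cW(\Omega))}$, which is uniform for $\hat b\ge 0$.

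\textbf{The $b_s$-difference step is missing the weighted estimate.} The source of the $b_s$-difference contains $|\be(\bu_1-\bu_2)|$ and $(1+|\be(\bu_1)|)\,|\hat b_1-\hat b_2|$, which are only in $L^2$ in space. So it can be bounded linearly only in $L^\infty(0,\tau;L^2)$, not in $L^q$ for $q>2$. Consequently, in the $p_d$- and $c$-difference equations the term $\big(r_s(b_{s,1})-r_s(b_{s,2})\big)Q_s(\cdot,\be(\bu))$ is a product of two factors that are both merely $L^2$. Your heuristic ``$L^2$-norm of the gradient difference times $L^\infty$ bounds of the other factors'' does not cover it.

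The missing idea is as follows.
\begin{itemize}
\item Keep the dissipative term $2\rho_s\int_0^\tau\|\sqrt{Q_s}\,(b_{s,1}-b_{s,2})\|_{L^2}^2\,dt$, which the monotonicity of $r_s$ produces in the $L^2$ test of the ODE difference; see~\eqref{estim_contract_bs}.
\item Estimate $\int Q_s|b_{s,1}-b_{s,2}|\,|p_{d,1}-p_{d,2}|\,dx\le \|Q_s\|_{L^2}^{1/2}\,\|\sqrt{Q_s}\,(b_{s,1}-b_{s,2})\|_{L^2}\,\|p_{d,1}-p_{d,2}\|_{L^4}$, and similarly for $c$.
\item Absorb the $L^4$ norm by Gagliardo--Nirenberg.
\end{itemize}
Only with this weighted estimate does the $L^2$ difference bound~\eqref{estim_tilde_22} close. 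That bound is what feeds the Alikakos iteration for $b_1-b_2$.
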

\begin{proof}
For a given $b^\ve \in L^\infty((0,T)\times\Omega^\ve_{M})$,  applying the Lax-Milgram theorem and using the properties of the elasticity tensor and the second Korn and trace inequalities, see e.g.~\cite{Ciarlet, Galdi},  we obtain existence of a unique solution of \eqref{EQST}  satisfying
\begin{equation} \label{contract_u}
  \| \bu^\ve_1 - \bu^\ve_2\|^2_{L^\infty(0,T;\cW(\Omega))}  \leq C \|b^\ve_1 - b^\ve_2\|^2_{L^\infty(\Omega_{M, T}^\ve)},
\end{equation}
where $\bu^\ve_1$ and $\bu^\ve_2$ are solution of \eqref{EQST} for  $b^\ve_1, b^\ve_2 \in L^\infty((0,T)\times\Omega_{M}^\ve)$ respectively.

 For given $\bu \in L^\infty(0,T; \cW(\Omega))$, using a fixed-point argument, the Galerkin method and estimates similar to those in Lemma~\ref{estim:apriori} we obtain existence of a unique non-negative weak solution of problem~\eqref{main_2}--\eqref{IC}. In the derivation of estimates in $L^\infty((0,T)\times \Omega_M^\ve)$, we first consider $|v^\ve_K|^{q-2} v^\ve_K$, where $v^\ve_K=\min\{ v^\ve, K\}$ and $\|v^\ve(0)\|_{L^\infty(\Omega^\ve_M)} \leq K$, for $v^\ve =  p_d^\ve, c^\ve, b^\ve$, as test functions in~\eqref{weak_sol_n1} and, by deriving estimates uniform in $K$, show that $|v^\ve|^{\frac q 2} \in L^2(0,T; H^1(\Omega_M^\ve))$ and $v^\ve \in L^q((0,T)\times \Omega_M^\ve)$, for any $q\geq 0$,  and hence  $|v^\ve|^{q-2} v^\ve$ are  admissible test functions in~\eqref{weak_sol_n1}.
 More specifically, using the nonnegativity of $p^\ve_d$, $c^\ve$, $b^\ve$ and considering $|p^\ve_{d,K}|^{q-2} p^\ve_{d,K}$, $|c^\ve_K|^{q-2} c^\ve_K$,  and $|b^\ve_K|^{q-2} b^\ve_K$ as test functions in the corresponding equations in \eqref{weak_sol_n1} yield
 \begin{equation}\label{estim_K_q}
\begin{aligned}
 &   \int_0^\tau \!\!\Big[\|\nabla |p_{d,K}^\ve|^{\frac q 2}\|^2_{2} \! +   \|\nabla |c_K^\ve|^{\frac q 2}\|^2_{2}  \! + \|\nabla |b_K^\ve|^{\frac q 2}\|^2_{2}\Big] dt + \|p_{d,K}^\ve(\tau)\|^q_{q} \\
 &\quad    + \|c^\ve_K (\tau)\|^q_{q} \!
  +  \|b^\ve_K (\tau)\|^q_{q} \! + q C_1\!\!\int_0^\tau \!\!  \Big[ \|p_{d,K}^\ve\|^q_{L^q(\Gamma_{\cI})} \! +\! \|c^\ve_K \|^q_{L^q(\Gamma_{\cI})}\!  +\!  \|b^\ve_K\|^q_{q} \Big] dt 
\\
& \leq  C_2\! \!\int_0^\tau\!\!\Big[1 +
q\|\be(\bu^\ve)\|_{2}\|b_s^\ve\|_{{2q}}\big(\|p_{d,K}^\ve\|_{{2q}}^{q-1} + \|c^\ve_K\|_{{2q}}^{q-1}\big) + \delta\|p_{d,K}^\ve\|^q_{q}  \\
& \qquad \qquad  + \big\langle p_d^\ve, |b_K^\ve|^{q-1}\big\rangle_{\Omega_M^\ve}\Big]dt
  + q\!\int_0^\tau\!\! \!\big\langle |J_c(\bu^\ve)|, |c^\ve_K|^{q-1} \big\rangle_{\Gamma_\cI} dt,
 \end{aligned}
\end{equation}
for any fixed $\delta >0$. Testing~\eqref{weak_sol_bs} with $|b^\ve_{s,K}|^{q-2} b^\ve_{s,K}$ implies
 \begin{equation}
  \begin{aligned}
 &\|b^\ve_{s, K} (\tau)\|^q_{q} + q d_s\int_0^\tau \!\!\!\! \|b^\ve_{s, K}\|^q_{q} dt
 + q \rho_s\int_0^\tau \!\!\! Q_s(b^\ve, \be(\bu^\ve)) \|b^\ve_{s, K}\|^q_{q}dt  \\
 & \leq  \|b^\ve_s (0)\|^q_{q}  + C_\delta \Big[\int_0^\tau  \!\! \!\!\|b^\ve\|^{\frac q 3}_{q} dt\Big]^3
+ (q-2)\delta \!\!\int_0^\tau\!\!\! \!\|b^\ve_{s, K} \|^q_{q} dt
  + \delta \!\sup\limits_{(0,\tau)} \|b^\ve_{s, K}\|^q_{q},
  \end{aligned}
 \end{equation}
for any fixed $\delta>0$.  Thus for $0<\delta \leq\min\{ d_s/2, 1/2\}$ and $q>3$ we have
$$
\begin{aligned}
 \sup\limits_{(0,\tau)} \|b^\ve_{s, K} \|^q_{q} + q \int_0^\tau \!\!\Big[ d_s\|b^\ve_{s, K}\|^q_{q}  + 2\rho_s Q_s(b^\ve, \be(\bu^\ve)) \|b^\ve_{s, K}\|^q_{q} \Big]dt
 \leq C_1 + C_2 \Big[\int_0^\tau \! \! \|b^\ve \|^{\frac q 3}_{q} dt\Big]^3.
 \end{aligned}
  $$
Then the terms on the right-hand side of \eqref{estim_K_q} are estimates as follows
$$
\begin{aligned}
&\int_0^\tau \!\! \!\big\langle p_d^\ve, |b_K^\ve|^{q-1}\big\rangle_{\Omega_M^\ve}\! dt
\leq
\|p_d^\ve\|_{L^\infty(0,\tau; L^2(\Omega_M^\ve))} \||b_K^\ve|^{\frac q 2}\|_{L^2(0,\tau; L^4(\Omega^\ve_M))}^{2 \frac{q-1} q}
\\
&\qquad \leq C \|p_d^\ve\|_{L^\infty(0,\tau; L^2(\Omega_M^\ve))}^q + \delta \big[\||b_K^\ve|^{\frac q 2}\|^2_{L^2(\Omega_{M,\tau}^\ve)} + \|\nabla |b_K^\ve|^{\frac q 2}\|^2_{L^2(\Omega_{M,\tau}^\ve)} \big],
\end{aligned}
$$
for any fixed $\delta >0$, and
$$
\begin{aligned}
& I = \int_0^\tau\!\! \|b_s^\ve\|_{{2q}}\big[\|p_{d,K}^\ve\|_{{2q}}^{q-1} + \|c^\ve_K\|_{{2q}}^{q-1}\big]  dt
\\
& \quad \leq  C\! \int_0^\tau \!\!\Big[1+ \Big[\int_0^t \!\!\|b^\ve\|^{\frac {2q}3}_{{2q}} d\zeta \Big]^{\frac 3{2q}} \Big]\big[\|p_{d,K}^\ve\|_{{2q}}^{q-1} + \|c^\ve_K\|_{{2q}}^{q-1}\big]   dt.
\end{aligned}
$$
Using the embedding theorem,  for $d\leq 3$, we obtain
$$
\int_0^t \!\!\|b^\ve\|^{\frac {2q}3}_{{2q}} d\zeta  =
\int_0^t \!\!\||b^\ve|^{\frac q3}\|_{6}^2 \, d\zeta \leq C \big[\||b^\ve|^{\frac q3}\|_{L^2(\Omega^\ve_{M,t})}^2
+ \|\nabla|b^\ve|^{\frac q3}\|_{L^2(\Omega^\ve_{M,t})}^2\big].
$$
Then,  for $q \geq 3$, combining the relation
$$
\begin{aligned}
\|\nabla|b^\ve|^{\frac q3}\|_{L^2(\Omega_{M,\tau}^\ve)}^2 & \leq \|\nabla b^\ve\|^2_{L^2(\Omega^{\ve,1}_{M,\tau})}
+ \|\nabla| b^\ve|^{\frac {q-1} 2}\|^2_{L^2(\Omega^\ve_{M,\tau} \setminus\Omega^{\ve,1}_{M,\tau})} \\ & \leq
\|\nabla b^\ve\|^2_{L^2(\Omega^{\ve}_{M,\tau})}
+ \|\nabla| b^\ve|^{\frac {q-1} 2}\|^2_{L^2(\Omega^\ve_{M,\tau})},
\end{aligned}
$$
where $\Omega^{\ve,1}_{M,\tau} =\{(t,x) \in (0,\tau)\times \Omega^\ve_M\, : \, b^\ve(t,x) \leq 1\}$,  with the estimate
$$
\begin{aligned}
 \|p_{d,K}^\ve\|_{{2q}}^{q-1} \!+ \|c^\ve_K\|_{{2q}}^{q-1}
\!\leq C\Big[\||p_{d,K}^\ve|^{\frac q2}\|_{{2}}^{\frac{q-1}{2q}}
\|\nabla |p_{d,K}^\ve|^{\frac q2}\|_{{2}}^{\frac{3(q-1)}{2q}}\! \!\!\!+
 \||c_{K}^\ve|^{\frac q2}\|_{{2}}^{\frac{q-1}{2q}}
\|\nabla |c_{K}^\ve|^{\frac q2}\|_{{2}}^{\frac{3(q-1)}{2q}}\\
+ \||p_{d,K}^\ve|^{\frac q2}\|_{2}^{\frac{2(q-1)}q} + \||c_{K}^\ve|^{\frac q2}\|_{2}^{\frac{2(q-1)}q} \Big],
\end{aligned}
$$
derived from  the Gagliardo-Nirenberg inequality, we obtain
$$
\begin{aligned}
I \leq & \delta \, \big[ \||p_{d,K}^\ve|^{\frac q2}\|_{{2}}^2
+ \|\nabla |p_{d,K}^\ve|^{\frac q2}\|_{2}^2 +
 \||c_{K}^\ve|^{\frac q2}\|_{{2}}^2 +
\|\nabla |c_{K}^\ve|^{\frac q2}\|_{{2}}^2 \big] \\
& + C_\delta
\big[1+ \|\nabla b^\ve\|^2_{2} + \||b^\ve|^{\frac{q-1} 2} \|^2_{2}
+  \|\nabla |b^\ve|^{\frac{q-1} 2} \|^2_{2}\big]^{\frac 32},
\end{aligned}
$$
for  $q \geq 3$,  $d\leq 3$, and  any fixed $\delta >0$.
The trace inequality implies
$$
\int_0^\tau \!\! \big\langle |J_c(\bu^\ve)|, |c^\ve_K|^{q-1} \big\rangle_{\Gamma_\cI} dt \leq
C \int_0^\tau \!\!\Big[\| \bu^\ve\|^q_{\mathcal W(\Omega)}
+ \delta \big(\| |c^\ve_K|^{\frac q 2}\|^2_{2}
+ \|\nabla  |c^\ve_K|^{\frac q 2}\|^2_{2} \big)\Big] d t.
$$
Combining the estimates from above and choosing appropriate $\delta>0$  yields
\begin{equation*}\label{estim_qK}
 \begin{aligned}
 \sup_{(0,T)}\! \big[ \|p_{d,K}^\ve \|^q_{q} + \|c^\ve_K \|^q_{q} +  \|b^\ve_K \|^q_{q}\big]\! + \Big[\|\nabla |p_{d,K}^\ve|^{\frac q 2}\|^2_{2} +   \|\nabla |c_K^\ve|^{\frac q 2}\|^2_{2}   + \|\nabla |b_K^\ve|^{\frac q 2}\|^2_{2}\Big] \\
  \leq C\Big[ 1+ \| \bu^\ve\|_{L^\infty(0,T; \mathcal W(\Omega))} + \|p_d^\ve\|_{L^\infty(0,T; L^2(\Omega^\ve_{M}))}^q \qquad
 \\
 + \big[\|\nabla b^\ve\|^2_{L^2(\Omega^\ve_{M,T})} + \||b^\ve|^{\frac{q-1} 2} \|^2_{L^2(\Omega^\ve_{M,T})}
+  \|\nabla |b^\ve|^{\frac{q-1} 2} \|^2_{L^2(\Omega^\ve_{M,T})}\big]^{\frac 32} \Big].
 \end{aligned}
\end{equation*}
Using estimates for $q=2$,  iterating over $q\geq 3$,  and letting $K \to \infty$,
we obtain the required regularity for $c^\ve$, $p^\ve_d$, $b^\ve$ and $b^\ve_s$.

 To show existence of a unique solution  of the coupled problem~\eqref{EQST}--\eqref{IC} we apply the Banach fixed-point theorem to $\mathcal K: L^\infty(\Omega_{M,T}^\ve)\to L^\infty(\Omega_{M,T}^\ve)$, where for given $\hat b^\ve \in L^\infty(\Omega_{M,T}^\ve)$, $b^\ve = K(\hat b^\ve)$ is a solution of \eqref{EQST}-\eqref{IC}, with $\hat b^\ve$ instead of $b^\ve$ in~\eqref{EQST}.
To derive the contraction inequality we consider $\tilde p_d^\ve = p^\ve_{d,1} - p^\ve_{d,2}$, $\tilde c^\ve = c^\ve_{1} - c^\ve_{2}$,  $\tilde b^\ve = b^\ve_{1} - b^\ve_{2}$, and
 $\tilde b_s^\ve = b^\ve_{s,1} - b^\ve_{s,2}$ as test functions in the  equations for $\tilde p_d^\ve$, $\tilde c^\ve$, $\tilde b^\ve$, and $\tilde b^\ve_s$ respectively, and using  assumptions on the nonlinear functions,  estimates in Lemma~\ref{estim:apriori} and the Gagliardo-Nirenberg inequality obtain
 \begin{equation}\label{estim_contract_bs}
 \begin{aligned}
   \|\widetilde b_s^\ve \|^2_{2} + d_s  \int_0^\tau\!\!\! \|\widetilde b_s^\ve \|^2_{2} dt + 2\rho_s \int_0^\tau\!\!\! \|\sqrt{Q_s(b^\ve, \bu_1^\ve)}\, \widetilde b_s^\ve\|^2_{2} dt
    \leq C\int_0^\tau \!\!\!\big[\| \widetilde b^\ve \|^2_{2} + \|\be( \widetilde \bu^\ve) \|^2_{2} \big]dt
   \end{aligned}
 \end{equation}
 and
\begin{equation*}\label{estim_tilde_21}
\begin{aligned}
&  \frac d{dt} \big[\|\widetilde p_{d}^\ve \|^2_{2} +  \|\widetilde c^\ve \|^2_{2}+
\|\widetilde{b}^\ve \|^2_{2} \big]
 + 2d\big[ \|\nabla \widetilde p_d^\ve\|^2_{2} + \|\nabla \widetilde c^\ve\|^2_{2} + \|\nabla \widetilde b^\ve\|^2_{2}\big]
  \!\leq \!C\Big[\|\be(\widetilde\bu^\ve)\|_{2}^2
  \\
  & +\|\be(\bu^\ve_1) \|_{2} \|\widetilde b^\ve\|^2_{4} +  \|\widetilde p_d^\ve\|^2_{2} +
\|\widetilde c^\ve \|^2_{2} + \delta \|\nabla \widetilde c^\ve\|^{2}_{2}+ \|\widetilde b^\ve \|^2_{2}
  + \big(\|\be(\widetilde\bu^\ve)\|_{2} \|b_s^\ve(1+ b^\ve)\|_{4} \\
  & \quad + \|b_s^\ve\|_{L^\infty} \|\be(\bu^\ve)\|_{2} \|\widetilde b^\ve\|_{4} + \|\be(\bu^\ve)\|_{2}^{\frac 12}\|\sqrt{Q_s(b^\ve,\bu^\ve_1)}\, \widetilde b_s^\ve\|_{2}\big)\big(\|\widetilde p_{d}^\ve\|_{4} +  \|\widetilde c^\ve \|_{4}\big)
\Big]\\
& \leq C \big[\|\be(\widetilde\bu^\ve)\|_{2}^2 + \|\widetilde p_d^\ve\|^2_{2} +
\|\widetilde c^\ve \|^2_{2}+ \|\widetilde b^\ve \|^2_{2}\big] +\delta\big[ \|\nabla \widetilde p_d^\ve\|^{2}_{2}+\|\nabla \widetilde c^\ve\|^{2}_{2}+ \|\nabla \widetilde b^\ve\|^{2}_{2} \big],
\end{aligned}
\end{equation*}
for any fixed $\delta>0$. Choosing $\delta>0$ sufficiently small and
applying  the  Gr\"onwall inequality  yield
\begin{equation}\label{estim_tilde_22}
\begin{aligned}
 &  \|\widetilde p_{d}^\ve(\tau) \|^2_{2} +  \|\widetilde c^\ve(\tau) \|^2_{2}+  \|\widetilde b^\ve(\tau) \|^2_{2}
 +  \int_0^\tau \!\!\big[\|\nabla \widetilde p_d^\ve\|^2_{2} + \|\nabla \widetilde c^\ve\|^2_{2}
 + \|\nabla \widetilde b^\ve\|^2_{2} \big]dt
\leq C  \int_0^\tau\!\!\! \|\be(\widetilde\bu^\ve)\|_{2}^2 dt,
\end{aligned}
\end{equation}
for $\tau \in (0,T]$. Considering now $\widetilde b^\ve |\widetilde b^\ve|^{q-2}$ as a test function in the equation for $b^\ve$ in~\eqref{weak_sol_n1}, we obtain
\begin{equation}\label{estim_tilde_q}
 \begin{aligned}
   \frac d{dt}  \|\widetilde b^\ve \|^q_{q}   + \|\nabla |\widetilde b^\ve|^{\frac q 2}\|^2_{2}
 \leq C\Big[ q\big(\|\be(\widetilde\bu^\ve)\|_{2}+   \|\widetilde p_d^\ve\|_{2}  +  \|\widetilde c^\ve\|_{2}  \big) \||\widetilde b^\ve|^{q-1} \|_{2} \qquad  \\ +  q\|\be(\bu^\ve_2) \|_{2} \||\widetilde b^\ve|^{q} \|_{2} \Big] = I_1 + I_2.
 \end{aligned}
 \end{equation}
 Then calculations similar to~\eqref{estim_diff_11}, \eqref{etim_GN_L4} and \eqref{complex_in}, and  estimates for $\widetilde p_d^\ve$ and $\widetilde c^\ve$  in~\eqref{estim_tilde_22},  imply
\begin{equation*}
\begin{aligned}
& \int_0^\tau \!\! I_1 dt
 \leq C \|\be(\widetilde \bu^\ve) \|_{L^{1+\frac 1\varsigma}(0,\tau; L^2)}^q \!\! +  (q-1)
\Big[ \int_0^\tau\!\! \| |\widetilde b^\ve|^{\frac q 2}\|^{2(1+\varsigma)}_{4} dt \Big]^{\frac 1{1+\varsigma}}
\\
& \; \leq  C  \|\be(\widetilde \bu^\ve) \|_{L^{1+\frac 1 \varsigma}(0,\tau; L^2)}^q
\!\!\! \!+
 \delta  \frac{ q-1}{q} \| \nabla |\widetilde b^\ve|^{\frac q 2}\|^2_{2} \!  +
 C_\delta   \tau^{\frac{1- a(1+\varsigma)}{(1-a)(1+\varsigma)}} \, q^{10} \Big[\!\sup_{(0,\tau)} \| |\widetilde b^\ve|^{\frac q 2}\|_{L^1}\Big]^2,
 \end{aligned}
\end{equation*}
for $ \tau\in (0, T]$,  any $\delta>0$, $a =   9/{10} $ for a three-dimensional domain, and   $0<\varsigma<1$ such that $a(1+ \varsigma) < 1$.
For $I_2$ we have similar estimate as in \eqref{estim_ub_I2}. Considering   iterations in $q$ as in    \cite[Lemma~3.2]{Alikakos}, with $q = 2^\kappa$ and  $\kappa=1, 2,3,\ldots,$  we obtain
\begin{equation*}
 \|\widetilde b^\ve \|_{L^\infty((0,\tau)\times \Omega_M^\ve)}
  \leq  C  \| \be(\widetilde \bu^\ve)\|_{L^{1+ \frac 1 \varsigma}(0,\tau;L^2(\Omega))}  \quad
  \text{ for } \; \varsigma \in (0, 1/9) \; \text{ and } \; \tau \in (0, T].
\end{equation*}
Thus, together with \eqref{contract_u}, for $b_1^\ve = \mathcal K(\hat b_1^\ve)$ and $b_2^\ve = \mathcal K(\hat b_2^\ve)$ we obtain
$$
\|b^\ve_1 - b^\ve_2 \|_{L^\infty((0,\tau)\times \Omega_M^\ve)} \!\leq  C \tau^{\frac \zeta{1+\zeta}}  \| \be(\widetilde \bu^\ve)\|_{L^\infty(0,\tau;L^2(\Omega))} \!\leq  C \tau^{\frac \zeta{1+\zeta}} \|\hat b^\ve_1 - \hat b^\ve_2 \|_{L^\infty((0,\tau)\times \Omega_M^\ve)},
$$
which for $C \tau^{\frac \zeta{1+\zeta}} < 1$ is a contraction inequality. Thus applying the Banach fixed point theorem and  iterating  over time-intervals,  yield  the existence of a unique weak solution of the coupled model~\eqref{EQST}-\eqref{IC}.
\end{proof}


\section{Derivation of macroscopic problem for microscopic model~\eqref{EQST}--\eqref{IC}}\label{macro_model}
To derive the macroscopic equations for model~\eqref{EQST}--\eqref{IC} defined in a domain with  non-periodic microstructure we  approximate it by a  problem defined in a domain with a locally periodic  microstructure  and apply the locally periodic (l-p) two-scale convergence and unfolding method. The  unfolding operator maps functions defined on the $\ve$-dependent perforated domains into functions defined on  the  Cartesian product of two  fixed domains, the  macroscopic domain  and the unit cell. Then, by showing the Cauchy property,  we can  prove the strong convergence of the unfolded sequence, which then implies  the strong l-p two-scale convergence of the original sequence, required to pass to the limit as $\ve \to 0$ in the nonlinear functions in the microscopic problem. In the problem considered here we apply the l-p unfolding operator to prove strong l-p two-scale convergence of $b_s^\ve$ satisfying ODE~\eqref{weak_sol_bs} in the perforated domain $\Omega^\ve_M$. Using the unfolding operator it is possible to consider $b_s^\ve$  defined on $(0,T)\times \Omega^\ve_M$, however, for the simplicity,  we apply the unfolding operator to  an extension of $b^\ve_s$ from $\Omega^\ve_M$ into $\Omega_W$, given as a solution of ODE~\eqref{weak_sol_bs} in $(0,T)\times \Omega_W$, with $b^\ve$ replaced by its extension.

To define a locally periodic approximation for a non-periodic microstructure of the cell walls,  we consider  a partition covering  $\Omega_W \subset \cup_{n=1}^{N_\ve}\overline \Omega_n^\ve$  by a family of  open non-intersecting cubes $\{\Omega_n^\ve\}_{1\leq n\leq  N_\ve}$ of side $\varepsilon^r$, with $0<r<1$,  and
 $\Omega_n^\ve \cap \Omega_W \neq \emptyset$, where $N_\ve$  is the number of  $\Omega_n^\ve$ having a non-empty intersection with $\Omega_W$. Consider also
$
\mathcal K^\ve= \Omega \setminus  \big(\bigcup_{n=1}^{\tilde N_\ve} \overline\Omega_n^\ve\big),
$
where  $\tilde N_\ve$  denotes  the number of cubes $\Omega_n^\ve$ enclosed in $\Omega_W$,  and $\mathcal K^\ve\subset \bigcup_{n=\tilde N_\ve +1}^{N_\ve}\overline \Omega_n^\ve$.
All $\Omega_n^\ve$ with $n=\tilde N_\ve+1, \ldots, N_\ve$ have the non-empty intersection with $\partial \Omega_W$ and are enclosed in a $\ve^r$-neighbourhood of $\partial \Omega_W$, and hence
$
| \mathcal K^\ve| \leq  C \ve^r,$
with some constant $C>0$. This gives $\tilde N_\ve \ve^{rd} \leq |\Omega_W|$ and $N_\ve \ve^{rd} \leq |\Omega_W| + C
$  for $0<\ve \leq 1$. The power $r$,  which determines the size of the local periodicity of the microstructure, is specified below and is defined through the approximation  of the non-periodic microstructure by a locally periodic one.

 For the non-periodic microstructure of the cell walls, described by rotated planes of parallel-aligned microfibrils,  we choose  $\kappa_n \in \mathbb Z^3$, for  $n=1,\ldots, N_\ve$,  such that for $x_{n}^\ve=\ve R_{x_{n}^\ve} \kappa_n $ we have $x_{n}^\ve \in \Omega_n^\ve$, and cover $\Omega_n^\ve$ by  parallelepipeds $Y_{x_n^\ve} = D(x_{n}^\ve) Y$, where  matrix $D(x)\in \mathbb R^3$, for $x \in \Omega_W$, is specified below, i.e.,
$$
\Omega_n^\ve\subset x_{n}^\ve +\bigcup_{j=1}^{I_n^\ve} \ve Y^j_{x_{n}^\ve}, \quad \text{  where   } \; Y^j_{x_{n}^\ve}=D(x_{n}^\ve)(Y+m_j) \; \text{ for }\;  m_j\in \mathbb Z^3.
$$
To specify matrix $D(x)$,  we use the  regularity of $\gamma$  and  the Taylor expansion for $R^{-1}$ around  $x_n^\ve$. Then   for $k_j^n = \kappa_n + m_j$ and $x_{k_j^n}^\ve= \ve R_{x_{k_{j}^{n}}^\ve} k_j^n$, with $1\leq j \leq I_n^\ve$, we obtain
\begin{equation}\label{Taylor_Approx}
\begin{aligned}
 &R^{-1}_{x_{k_j^n}}(x- x_{k_j^n}^\ve) =R^{-1}_{x_{k_j^n}} x- \ve k_j^n \\
 & = R^{-1}_{x^\ve_{n}}x +  (R^{-1}_{x^\ve_n})^\prime x_n^\ve  m_{j,3}\ve +(R^{-1}_{x^\ve_n})^\prime(x- x_{n}^\ve)
 m_{j,3}\ve+ C|m_{j,3} \ve|^2x -  \ve(\kappa_n+ m_j)
 \\ &
= R^{-1}_{x^\ve_n}(x  -x_{n}^\ve)- \tilde W_{x_{n}^\ve}m_j\ve +(R^{-1}_{x^\ve_{n}})^\prime(x- x_{n}^\ve)  m_{j,3}\ve+  C|m_{j,3} \ve|^2x,
\end{aligned}
\end{equation}
where    $\tilde W_{x_{n}^\ve}= \tilde W(x_{n}^\ve)$ with $\tilde W(x)=(I- \nabla R^{-1}(\gamma(x_3)) x)$.  The notation of the gradient is understood as  $\nabla R^{-1}(\gamma(x))x=\nabla_z(R(\gamma(z))x)|_{z=x}$. Thus for  $x\in \Omega_n^\ve$ the distance  between
  $R^{-1}_{x^\ve_n} (x -x_n^\ve)- \tilde W_{x_n^\ve} m_j\ve$ and   $R^{-1}_{x_{k_j^n}}(x-  x_{k_j^n}^\ve)$ is of order $\sup\limits_{1\leq j\leq I_n^\ve} |m_j\ve|^2 \sim \ve^{2r}$.
This  implies that the non-periodic microstructure can by approximated by locally periodic one,  comprising  $ Y_{x_n^\ve}$-periodic microstructure  in each $\Omega_n^\ve$ of side $\ve^r$,  with an appropriately  chosen  $r\in (0,1)$, where     $Y_x= D(x)Y$  with   $D(x)=R(\gamma(x_3))W(x)$ and
\begin{equation}\label{eq:W}
W(x) = \begin{pmatrix}
1 & 0 & 0\\
 0&1 & w(x)\\
 0&0&1
\end{pmatrix}, \; \text{ where } \, \, w(x)=  \gamma^\prime(x_3)\big(\cos (\gamma(x_3))x_1+\sin(\gamma(x_3)) x_2\big).
\end{equation}
Notice that since the characteristic function of the parallel-aligned microfibrils is independent of the first variable, in the definition of $W$ we consider the  shift only for the second variable.
We denote $Z_x=W(x)Y$ and  consider a $Z_x$-periodic function
\begin{equation}\label{charact_l_p}
\hat \vartheta(x,y)= \sum_{k\in \mathbb Z^3} \vartheta(y- W(x) k).
\end{equation}
Then $\{y \in \mathbb R^3 : \hat \vartheta(x,y)=1\}$ is  a set of $Z_x$-periodic cylinders of radius $a$.

Using~\eqref{Taylor_Approx},    together with the estimate below for the characteristic function of a fibre system
\begin{equation}\label{Differ_Charact}
 ||\vartheta_r(x+\tau) - \vartheta_r(x)||^2_{L^2(\Omega)} \leq  C r L |\tau|,
\end{equation}
where $L$ and $r$ denote the fibre length and radius, respectively, see \cite{Briane1} for the proof, and
 observing that in each $\Omega_n^\ve$ the fibre length  and radius are of order $\ve^r$  and $\ve$, respectively, while $N_\ve\leq C \ve^{-3r}$ and  $I_n^\ve\leq C \ve^{3(r-1)}$, we  obtain
 \begin{equation}\label{estim_non_local_1}
\sum\limits_{n=1}^{N_\ve} \int_{\Omega_n^\ve}\sum\limits_{j=1}^{I_n^\ve} \Big| \vartheta_\ve\big(R^{-1}_{x_{k_j^{n}}}(x-x_{k_j^n}^\ve) \big)-
\vartheta_\ve\big(R^{-1}_{x_{\kappa_n}} (x-x_n^\ve) - W_{x_{n}^\ve}m_j\ve \big)\Big|^2 dx\leq  C\ve^{3r-2}.
\end{equation}
Thus, estimates~\eqref{Taylor_Approx} and~\eqref{estim_non_local_1} imply  that the domain with a locally periodic
microstructure, characterised by a  $Y_{x_n^\ve}$-periodic structure   in each
cubes $\Omega_n^\ve$ of size $\ve^r$, where  $r \in (2/3,1)$ and $n=1, \ldots, N_\ve$, provides an appropriate approximation for the original domain with a non-periodic microstructure, characterised by rotated layer of size $\ve$ consisting of parallel-aligned microfibrils. The definition of the characteristic function~\eqref{charact_l_p}  ensures that the periods in the locally periodic  microstructure depend on the rotation angles,
while preserving the natural cylindrical shape of the microfibrils.

To define the l-p two-scale convergence and the unfolding operator, we introduce the notion of a locally periodic approximation of functions   $\psi \in C(\overline\Omega_W; C_{\text{per}}(Y_x))$,  defined by the relation $\psi(x,y) = \tilde \psi(x, D_x^{-1} y)$ for the  corresponding functions $\tilde \psi \in C(\overline\Omega_W; C_{\text{per}}(Y))$, where  $ Y_x=D_x Y$, $D_x = D(x)$, and  $D, D^{-1} \in {\rm Lip}(\mathbb R^d; \mathbb R^{d\times d})$.

A locally periodic approximation  $\mathcal L^\ve: C(\overline\Omega_W; C_{\text{per}}(Y_x))\to L^\infty(\Omega_W)$ of  $\psi \in C(\overline\Omega_W; C_{\text{per}}(Y_x))$  is defined as
\begin{equation}\label{lp_approx}
(\mathcal L^\ve \psi)(x)=    \sum\limits_{n=1}^{N_\ve} \tilde \psi\Big(x, \frac {D^{-1}_{x_n^\ve}(x-\tilde x_n^\ve) }\ve\Big)\chi_{\Omega_n^\ve}(x)
\quad \text{ for } x\in \Omega_W.
\end{equation}
We   consider also  the map $\mathcal L^\ve_0: C(\overline\Omega_W; C_{\text{per}}(Y_x)) \to L^\infty(\Omega_W)$ defined  for $x\in \Omega_W$ as
\[
(\mathcal L^\ve_0 \psi)(x)=\sum\limits_{n=1}^{N_\ve}  \psi\Big(x_n^\ve, \frac {x-\tilde x_n^\ve}\ve\Big)\chi_{\Omega_n^\ve}(x)=
 \sum\limits_{n=1}^{N_\ve} \tilde \psi\Big(x_n^\ve, \frac {D^{-1}_{x_n^\ve}(x-\tilde x_n^\ve)}\ve\Big)\chi_{\Omega_n^\ve}(x).
 \]
 If choosing $\tilde x_n^\ve= D_{x_n^\ve} \ve k$ for some $k \in \mathbb Z^d$, then  the periodicity of $\tilde \psi$ yields, for
 $x\in \Omega_W$,
\[
(\mathcal L^\ve \psi)(x)=
 \sum\limits_{n=1}^{N_\ve} \tilde \psi\Big(x, \frac {D^{-1}_{x_n^\ve}x }\ve\Big)\chi_{\Omega_n^\ve}(x),  \;
  (\mathcal L^\ve_0 \psi)(x)=
 \sum\limits_{n=1}^{N_\ve} \tilde \psi\Big(x_n^\ve, \frac {D^{-1}_{x_n^\ve}x }\ve\Big)\chi_{\Omega_n^\ve}(x),
\]
For $\psi$ in
  $C(\overline\Omega_W; L^q_{\text{per}}(Y_x))$ or  $L^p(\Omega_W; C_{\text{per}}(Y_x))$ the maps $\mathcal L^\ve\psi$ and  $\mathcal L^\ve_0\psi$  are defined analogously. In the proof of the  convergence theorem we shall   use a regular approximation of $\mathcal L^\ve \psi$, defined as
 \[
 (\mathcal L^\ve_{\rho} \psi)(x)= \sum\limits_{n=1}^{N_\ve} \tilde \psi\Big(x, \frac {D^{-1}_{x_n^\ve} x}\ve\Big)\phi_{\Omega_n^\ve}(x)
\quad \text{ for } x\in \Omega_W,
\]
where  $\phi_{\Omega_n^\ve}$  is an approximations of    $\chi_{\Omega_n^\ve}$
such that $\phi_{\Omega_n^\ve} \in C^\infty_0 (\Omega_n^\ve)$ and, for $0<r<\rho<1$,
\begin{equation}\label{ApproxCharactF}
 \sum\limits_{n=1}^{N_\ve}|\phi_{\Omega_n^\ve}  -\chi_{\Omega_n^\ve}| \to 0 \, \text{ in }  L^2(\Omega_W),\, \quad   \,
||\nabla^m \phi_{\Omega_n^\ve}||_{L^\infty(\mathbb R^3)}\leq C \ve^{-\rho m}.
\end{equation}
As an example for $\phi_{\Omega_n^\ve}$, we can consider $\phi_{\Omega_n^\ve}(x)= \varphi_\ve\ast \chi_{\Omega_{n,\rho}^\ve}$  for $\Omega_n^\ve \subset \Omega_W$  and $\phi_{\Omega_n^\ve}(x)=0$  for $\Omega_n^\ve\cap \partial \Omega_W\neq \emptyset$, where
$\varphi_{\ve}(x)= \frac 1{\ve^{n\rho}}\varphi(\frac x {\ve^\rho})$, with  $\varphi(x)=c\exp(-1/(1-|x|^2))$ for $|x|<1$ and $\varphi(x)=0$ for $|x|\geq 1$ and $\Omega_{n,\rho}^\ve=\{x\in \Omega_n^\ve, \text{dist}( x, \partial\Omega_n^\ve)> \ve^\rho\}$ and  $c$ is such that $\int_{\mathbb R^d} \varphi(x) dx =1$. Then $\phi_{\Omega_n^\ve} \in C^\infty_0 (\Omega_n^\ve)$  and~\eqref{ApproxCharactF} follow from the properties  of $\varphi_{\ve}$. Notice
$\|\sum_{n=1}^{N_\ve}|\phi_{\Omega_n^\ve}  -\chi_{\Omega_n^\ve}|\|_{L^2(\Omega_W)}\leq C_1 \ve^{r(d-1)} \ve^\rho N_\ve\leq C_2 \ve^{\rho-r}$. A different construction of $\phi_{\Omega_n^\ve}$ can be found in \cite{Briane1}.

\begin{definition}\label{def_two-scale}\cite{Ptashnyk_2013}
A sequence $\{u^\ve\} \subset  L^2(\Omega_W)$ is said to converge locally periodic two-scale (l-p two-scale) to $u \in L^2(\Omega_W; L^2(Y_x)) $ as $\ve \to 0$ if
\begin{eqnarray*}
\lim\limits_{\ve \to 0}\int_{\Omega_W} u^\ve(x) \mathcal L^\ve\psi(x) dx = \int_{\Omega_W}   \ddashinttt_{Y_x}  u(x,y) \psi(x, y)  dy dx,
\end{eqnarray*}
for any $\psi \in L^2(\Omega_W; C_{\text{per}}(Y_x))$, where $\mathcal L^\ve\psi $ is the locally periodic approximation of $\psi$.
\end{definition}
The  convergence results for the locally periodic approximation  $\mathcal L^\ve$  ensure  that Definition~\ref{def_two-scale} does not depend on the choice of  $x_n^\ve, \tilde x_n^\ve \in \Omega_n^\ve$, for $n=1, \ldots, N_\ve$, in the definition of $\mathcal L^\ve$, see~\cite{Ptashnyk_2013} for details.

\begin{theorem}\label{lp_covregence}\cite{Ptashnyk_2013}
Let $\{u^\ve\}$ be a bounded sequence in $L^2(\Omega_W)$. Then there exists a subsequence  of $\{u^\ve\}$, denoted again by $\{u^\ve\}$, and  $u \in L^2(\Omega_W; L^2(Y_x))$, s.t.~$u^\ve \to u$ l-p two-scale as $\ve \to 0$. \\
 Let $\{u^\ve\} $ be a bounded sequence in $H^1(\Omega_W)$ that converges weakly to $u$ in $H^1(\Omega_W)$. Then
\begin{itemize}
\item   the sequence $\{u^\ve\}$ converges l-p two-scale to $u$;
\item there exist a subsequence of $\{\nabla u^\ve \}$,  not relabeled, and  $u_1\in L^2(\Omega_W; H^1_{\text{per}}(Y_x)/\mathbb R)$  such that $\nabla u^\ve \to \nabla u + \nabla_y u_1$  l-p two-scale  as $\ve \to 0$.
 \end{itemize}
\end{theorem}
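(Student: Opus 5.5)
The plan is to reduce both statements to the periodic two-scale compactness theorem, working cube by cube on the partition $\{\Omega_n^\ve\}$, and to control the errors from freezing the slow variable using the Lipschitz regularity of $D$ and $D^{-1}$.

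\emph{First claim (compactness in $L^2$).} I would use the locally periodic approximation $\mathcal L^\ve$ as a test-function map.

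First, I show that for $\psi\in L^2(\Omega_W;C_{\rm per}(Y_x))$
$$
\|\mathcal L^\ve\psi\|_{L^2(\Omega_W)}\to \Big(\int_{\Omega_W}\ddashinttt_{Y_x}|\psi(x,y)|^2\,dy\,dx\Big)^{1/2}.
$$
On each $\Omega_n^\ve$ I replace $\tilde\psi(x,\cdot)$ by $\tilde\psi(x_n^\ve,\cdot)$; by uniform continuity in $x$ the error is $o(1)$. I then use that $\Omega_n^\ve$ has side $\ve^r\gg\ve$, so it contains about $\ve^{3(r-1)}$ full cells $\ve D_{x_n^\ve}(Y+k)$. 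The mean of a $Y$-periodic function over these cells equals its cell average, up to boundary cells of relative measure $O(\ve^{1-r})$. Summing over $n$ and using $|\mathcal K^\ve|\le C\ve^r$ gives the limit as a Riemann sum.

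Second, the functional $\psi\mapsto\lim\int u^\ve\mathcal L^\ve\psi\,dx$ is then bounded by $\sup_\ve\|u^\ve\|_{L^2}\,\|\psi\|_{L^2(\Omega_W;L^2(Y_x))}$. A countable dense set in the separable space $L^2(\Omega_W;C_{\rm per}(Y))$ (transported via $D_x$), a diagonal argument and Riesz representation then give $u\in L^2(\Omega_W;L^2(Y_x))$.

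\emph{Second claim ($H^1$ sequences).}

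For the first bullet, $u^\ve\to u$ strongly in $L^2$ by Rellich. Together with $\mathcal L^\ve\psi\rightharpoonup\ddashinttt_{Y_x}\psi\,dy$ weakly in $L^2$, which follows from the same cube-wise averaging, this gives that the limit is $u$, independent of $y$.

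For the second bullet, I extract the l-p two-scale limit $\xi$ of $\nabla u^\ve$ and take the oscillating test functions $\ve\,\mathcal L^\ve_\rho\psi$ with $\psi$ vector valued and $\mathrm{div}_y\psi=0$. Integrating by parts on $\Omega_W$ avoids the interface terms between cubes, because $\phi_{\Omega_n^\ve}\in C_0^\infty(\Omega_n^\ve)$. The gradient of the test function produces three contributions:
\begin{itemize}
\item[(i)] $\mathcal L^\ve_\rho(\mathrm{div}_y\psi)$, which vanishes, noting that $\nabla_x$ of $D^{-1}_{x_n^\ve}x/\ve$ is the constant $D^{-1}_{x_n^\ve}/\ve$;
\item[(ii)] $\ve\,\nabla_x\tilde\psi$, which is $O(\ve)$;
\item[(iii)] $\ve\,\tilde\psi\,\nabla\phi_{\Omega_n^\ve}$, which is $O(\ve^{1-\rho})$ and tends to $0$ since $\rho<1$.
\end{itemize}
By \eqref{ApproxCharactF}, $\mathcal L^\ve_\rho$ may be replaced by $\mathcal L^\ve$ in the limit. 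Hence $\int\ddashinttt_{Y_x}(\xi-\nabla u)\cdot\psi\,dy\,dx=0$ for all solenoidal periodic $\psi$. The Helmholtz decomposition in $L^2_{\rm per}(Y_x)$ then yields $\xi-\nabla u=\nabla_y u_1$ with $u_1(x,\cdot)\in H^1_{\rm per}(Y_x)/\mathbb R$. To get the measurability of $u_1$ in $x$, I would do the decomposition on the reference cell $Y$ via $y=D_x\tilde y$.

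\emph{Main obstacle.} The hardest step is the averaging estimate. I need $\mathcal L^\ve\psi$ and $\mathcal L^\ve_\rho\psi$ to converge to the $Y_x$-average uniformly enough to handle $\psi$ that are only $L^2$ in $x$. I would first prove it for continuous $\psi$ and then use density together with the uniform bound $\|\mathcal L^\ve\psi\|_{L^2}\le C\|\psi\|_{L^2(\Omega_W;C_{\rm per})}$. The two error scales, $\ve^{1-r}$ for the cell-boundary layer and $\ve^{\rho-r}$ for the cutoffs, must be balanced by the choice $0<r<\rho<1$.
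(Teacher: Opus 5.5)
The paper does not prove this statement; it quotes it from \cite{Ptashnyk_2013}. Your compactness part is fine: convergence of $\|\mathcal L^\ve\psi\|_{L^2(\Omega_W)}$ by cube-wise averaging, the bound $\|\mathcal L^\ve\psi\|_{L^2(\Omega_W)}\le\|\tilde\psi\|_{L^2(\Omega_W;C_{\rm per}(Y))}$, density, diagonal extraction and Riesz. Your first bullet (Rellich plus weak convergence of $\mathcal L^\ve\psi$ to its cell average) is also fine. Both are the standard Nguetseng--Allaire arguments transferred to $\mathcal L^\ve$.

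\textbf{The gap is in the second bullet.} With the test function $\ve\,\mathcal L^\ve_\rho\psi$, the left-hand side $\int_{\Omega_W}\nabla u^\ve\cdot\ve\,\mathcal L^\ve_\rho\psi\,dx$ is $O(\ve)$. On the right-hand side, all three contributions to $-\int_{\Omega_W}u^\ve\,\ve\,\mathrm{div}(\mathcal L^\ve_\rho\psi)\,dx$ vanish in the limit, as you show yourself. In particular (ii), the only place where $\nabla u$ could appear, is $O(\ve)$. So the computation only yields $0=0$, and the identity $\int_{\Omega_W}\frac{1}{|Y_x|}\int_{Y_x}(\xi-\nabla u)\cdot\psi\,dy\,dx=0$ does not follow from it. The $\ve$-scaled test function is Allaire's device for showing that the limit of $u^\ve$ is independent of $y$, which you already have from Rellich.

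To identify $\xi$ you need the order-one test function $\mathcal L^\ve_\rho\psi$, and then exactly the terms you discarded stop being small.
\begin{itemize}
\item The $\ve^{-1}$-term does not vanish. The condition $\mathrm{div}_y\psi=0$ means $\mathrm{tr}\big(\nabla_{\tilde y}\tilde\psi(x,\cdot)\,D_x^{-1}\big)=0$, whereas the chain rule produces $D^{-1}_{x_n^\ve}$. Since $|D^{-1}_{x_n^\ve}-D^{-1}_x|$ is only $O(\ve^r)$ on $\Omega_n^\ve$, this term is $O(\ve^{r-1})$.
\item The cut-off term $\sum_n\tilde\psi\cdot\nabla\phi_{\Omega_n^\ve}$ has size $\ve^{-\rho}$ on a set of measure of order $\ve^{\rho-r}$. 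Its $L^2$-norm therefore blows up, and no size estimate makes it small against $u^\ve$.
\end{itemize}
The choice $0<r<\rho<1$ helps with neither term.

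\textbf{One way to close the gap} is to use the solenoidal structure through a vector potential. Write $\psi=\bar\psi+\mathrm{curl}_y\Phi$, with $\bar\psi(x)=\frac{1}{|Y_x|}\int_{Y_x}\psi\,dy$ and $\Phi(x,y)=\tilde\Phi(x,D_x^{-1}y)$ being $Y_x$-periodic and $C^1$ in $x$. You can solve for $\Phi$ on the reference cell; $D\in C^1$ because $\gamma\in C^2$. Set $F^\ve=\sum_n\tilde\Phi(x,D^{-1}_{x_n^\ve}x/\ve)\,\phi_{\Omega_n^\ve}$. Then $\mathcal L^\ve_\rho\psi=\bar\psi\sum_n\phi_{\Omega_n^\ve}+\ve\,\mathrm{curl}F^\ve+R^\ve$ with $\|R^\ve\|_{L^\infty}\le C(\ve^r+\ve^{1-\rho})$. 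The cut-offs now enter only through first derivatives multiplied by $\ve$.

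For $\psi$ with compact support in $\Omega_W$ you have $\int\nabla u^\ve\cdot\mathrm{curl}F^\ve\,dx=0$. This gives $\int_{\Omega_W}\frac{1}{|Y_x|}\int_{Y_x}\xi\cdot\psi\,dy\,dx=\int_{\Omega_W}\nabla u\cdot\bar\psi\,dx$, and your Helmholtz step then goes through. To localize in $x$, take $\psi=\phi(x)\,D_x\tilde\Theta(D_x^{-1}y)$, with $\tilde\Theta$ running over a countable dense set of solenoidal $Y$-periodic fields; this map preserves $\mathrm{div}=0$.

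Alternatively, you can derive the statement from Theorem~\ref{prop_conver_1} and Lemma~\ref{l-t-s-l-p-eq}. Those work cell by cell in $\hat\Omega_n^\ve$ and never differentiate the cut-offs.
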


Analogously to the periodic case, see e.g.~\cite{CDG_book},  in the definition of the locally periodic unfolding operator we  consider the subdomains composed of unit cells contained in the corresponding domains
$\Omega_n^\ve$ of a partition  of $\Omega_W$,
\begin{equation}\label{Omega_hat}
 \hat \Omega^\ve= \bigcup_{n=1}^{N_\ve} \hat \Omega_n^\ve, \;  \text{ with }  \hat \Omega_n^\ve =\text{Int} \Big( \bigcup_{\xi \in \hat \Xi_n^\ve} \ve D_{x_n^\ve}(\overline{Y}+ \xi)\Big), \; \; \Lambda^\ve= \bigcup_{n=1}^{N_\ve} \Lambda^\ve_n  \cap \Omega_W,
\end{equation}
where $\Lambda^\ve_n = \Omega_n^\ve \setminus \hat \Omega_{n}^\ve$ and $\hat \Xi^\ve_n =\{\xi \in \Xi_n^\ve \; : \,  \, \ve D_{x_n^\ve}(Y+ \xi) \subset (\Omega_n^\ve \cap \Omega_W)\}$. Then for $Y^\ast = Y \setminus W(x)^{-1}\overline Y_0$ and $Y_x^\ast =D_x Y^\ast$, with $\overline Y_0 \subset Y$ and $W(x)^{-1}\overline Y_0\subset Y$ for $x \in \Omega_W$, the domain with local variations in the distribution of perforations is defined by
$$\Omega^\ast_{\ve} = \text{Int} \big(\bigcup_{n=1}^{N_\ve} \Omega_n^{\ast, \ve}\big)  \cap \Omega_W, \qquad \text{ where } \qquad
 \Omega_n^{\ast, \ve}  =  \bigcup_{\xi \in \hat \Xi_{n}^{\ve}} \ve D_{x_n^\ve}(\overline{Y^\ast} + \xi)\cup \overline \Lambda_{n}^{\ve}. $$
\begin{definition} \label{l-p-unf-oper} \cite{Ptashnyk_2015} For any  Lebesgue-measurable on $\Omega_W$  function $\psi$
the locally periodic (l-p)  unfolding operator
$\mathcal T_{\mathcal L}^\ve$ is defined as
\begin{eqnarray*}
\mathcal T^\ve_{\mathcal L} (\psi) (x,y) = & \sum\limits_{n=1}^{N_\ve}
\psi\big( \ve D_{x_n^\ve} \big[{D^{-1}_{x_n^\ve} x} /\ve \big]_{Y} +  \ve D_{x_n^\ve}  y \big) \chi_{\hat \Omega_{n}^\ve}(x)  \quad \text{ for } \, \, x \in \Omega_W \text{ and }   \, \,  y \in Y.
\end{eqnarray*}
 For any  Lebesgue-measurable on $\Omega_\ve^\ast$  function $\psi$
the l-p  unfolding operator $\mathcal T_{\mathcal L}^{\ast, \ve} $ is defined as
\begin{eqnarray*}
\mathcal T^{\ast, \ve}_{\mathcal L} (\psi) (x,y) = &
\sum\limits_{n=1}^{N_\ve} \psi\big( \ve D_{x_n^\ve} \big[ {D^{-1}_{x_n^\ve} x}/ \ve\big]_{Y} +  \ve D_{x_n^\ve}  y \big) \chi_{\hat \Omega_{n}^\ve}(x)  \quad \text{ for } \, \, x \in \Omega_W \text{ and }   \, \,  y \in Y^\ast.
\end{eqnarray*}
\end{definition}

Analogously to the periodic case \cite{CDG, CDDGZ, CDG_book},
the following compactness results hold for the l‑p unfolding operator.
\begin{theorem}\label{prop_conver_1}\cite{Ptashnyk_2015}
For a sequence $\{w^\ve \}\subset L^p(\Omega_W)$, with $p \in (1, +\infty)$,  satisfying
$$
\| w^\ve \|_{L^p(\Omega_W)} +
\ve \|\nabla w^\ve \|_{L^p(\Omega_W)} \leq C,$$
there exists $w \in L^p(\Omega_W; W^{1,p}_{\rm per}(Y_x))$ such that, upto a subsequence,
\begin{eqnarray*}
\begin{aligned}
 \T_{\mL}^\ve(w^\ve) & \; \rightharpoonup && w(\cdot, D_x \cdot) \quad\hspace{2.1 cm } && \text{ weakly in } \, L^p(\Omega_W; W^{1,p}(Y)), \\
 \ve  \T_{\mL}^\ve(\nabla w^\ve) &\; \rightharpoonup  && D_x^{-T}\nabla_y w(\cdot, D_x \cdot)  \quad && \text{ weakly in } \, L^p(\Omega_W\times Y).
\end{aligned}
\end{eqnarray*}
For   $\{w^\ve\}\subset W^{1,p}(\Omega_W)$ converging weakly to $w\in W^{1,p}(\Omega_W)$, there exists $w_1\in L^p(\Omega_W; W^{1,p}_{\rm per}(Y_x))$, such that
\begin{eqnarray*}
\begin{aligned}
 \mathcal T_{\mathcal L}^{\ve}(w^\ve) &\;  \rightharpoonup  &&  w   \quad && \text{  weakly in } \, \, L^p(\Omega_W; W^{1,p}(Y)), \\
 \mathcal T_{\mathcal L}^{\ve}(\nabla w^\ve)(\cdot, \cdot) &\;  \rightharpoonup  && \nabla_x w(\cdot) +  D_x^{-T} \nabla_y w_1(\cdot, D_x \cdot) \quad && \text{ weakly in } L^p(\Omega_W\times Y).
\end{aligned}
\end{eqnarray*}
For a sequence $\{w^\ve\} \subset W^{1,p}(\Omega^\ast_\ve)$, where $p\in (1, +\infty)$, satisfying
\begin{equation}\label{extim_w_ve}
\| w^\ve \|_{L^p(\Omega^\ast_\ve)}  + \ve \|\nabla w^\ve \|_{L^p(\Omega^\ast_\ve)} \leq C,
\end{equation}
there   exists $ w \in L^p(\Omega_W; W^{1,p}_{\rm per}(Y^\ast_x))$ such that, upto a subsequence,
\begin{equation}\label{weak_conver_1}
\begin{aligned}
\mathcal T_{\mathcal L}^{\ast, \ve} (w^\ve) &\;  \rightharpoonup &&  w(\cdot, D_x\cdot)  && \quad  \text{ weakly in }   \quad  L^p(\Omega_W; W^{1,p}(Y^\ast)), \\
  \ve \mathcal T_{\mathcal L}^{\ast, \ve} (\nabla w^\ve) & \;  \rightharpoonup && D_x^{-T} \nabla_y  w(\cdot, D_x\cdot) && \quad  \text{ weakly in }  \quad   L^p(\Omega_W; L^p(Y^\ast)).
\end{aligned}
\end{equation}
For a sequence $\{ w^\ve \} \subset W^{1, p}(\Omega^\ast_\ve)$, where $p\in (1, +\infty)$, satisfying
$$
\|w^\ve \|_{W^{1, p}(\Omega^\ast_\ve)} \leq C,
$$
there exist  $w\in W^{1,p}(\Omega_W)$ and $w_1\in L^p(\Omega_W; W^{1,p}_{\rm per}(Y^\ast_x))$ such that, upto a subsequence,
\begin{eqnarray*}
\begin{aligned}
\mathcal T^{\ast,\ve}_{\mL}(w^\ve) & \; \rightharpoonup && w \quad\qquad  &&   \text{ weakly in }   L^p(\Omega_W; W^{1,p}(Y^\ast)), \\
  \mathcal T^{\ast,\ve}_{\mL}(\nabla w^\ve)& \;  \rightharpoonup && \nabla w + D_x^{-T} \nabla_y w_1(\cdot, D_x \cdot)\quad &&  \text{ weakly in }   L^p(\Omega_W; L^p(Y^\ast)), \\
\mathcal T^{\ast,\ve}_{\mL}(w^\ve) & \; \to &&  w \quad\qquad &&  \text{ strongly in }   L^p_{\rm loc}(\Omega_W; W^{1,p}(Y^\ast)).
\end{aligned}
\end{eqnarray*}
\end{theorem}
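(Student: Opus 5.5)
We prove Theorem~\ref{prop_conver_1} by reducing it, cube by cube, to the classical periodic unfolding results of \cite{CDG, CDDGZ, CDG_book}. On each $\Omega_n^\ve$ the matrix $D_{x_n^\ve}$ is constant. The restriction of $\mathcal T^\ve_{\mL}$ to $\hat\Omega_n^\ve$ is therefore the standard periodic unfolding operator for the lattice $\ve D_{x_n^\ve}\mathbb Z^3$. After the linear change of variables $z = D_{x_n^\ve}^{-1}x$, it becomes the usual $\ve Y$-periodic unfolding.

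\textbf{Step 1: cellwise identities.} On each $\hat\Omega_n^\ve$ we establish the exact identities
\[
\int_{\hat\Omega_n^\ve}\!\int_Y |\mathcal T^\ve_{\mL}(\psi)|^p\,dy\,dx=\int_{\hat\Omega_n^\ve}|\psi|^p\,dx,
\qquad
\nabla_y \mathcal T^\ve_{\mL}(\psi)=\ve D_{x_n^\ve}^{T}\mathcal T^\ve_{\mL}(\nabla\psi).
\]
Both follow from the change of variables, whose Jacobian is $|\det D_{x_n^\ve}|$ on both sides. Summing over $n$ gives
\[
\|\mathcal T^\ve_{\mL}\psi\|_{L^p(\Omega_W\times Y)}\le \|\psi\|_{L^p(\Omega_W)}.
\]
Combined with the uniform bounds on $D$ and $D^{-1}$, this shows that the bounds $\|w^\ve\|_{L^p}+\ve\|\nabla w^\ve\|_{L^p}\le C$ give boundedness of $\mathcal T^\ve_{\mL}(w^\ve)$ in $L^p(\Omega_W;W^{1,p}(Y))$. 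The same computation on $Y^\ast$ handles $\mathcal T^{\ast,\ve}_{\mL}$. Weak compactness then yields a limit $\hat w$ together with the limit of $\ve\mathcal T^\ve_{\mL}(\nabla w^\ve)$.

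\textbf{Step 2: periodicity and identification.} Since $D$ is Lipschitz and $\mathrm{diam}\,\Omega_n^\ve\sim\ve^r\to 0$, the piecewise-constant field $\sum_n D_{x_n^\ve}\chi_{\Omega_n^\ve}$ converges to $D_x$ uniformly. This lets us identify the limit of $\ve\mathcal T^\ve_{\mL}(\nabla w^\ve)$ as $D_x^{-T}\nabla_y$ applied to $\hat w$, and set $w(x,D_xy)=\hat w(x,y)$. Periodicity of $\hat w$ in $y$ comes from the standard argument: compare the traces of $\mathcal T^\ve_{\mL}(w^\ve)$ on opposite faces of $Y$ via a shift $x\mapsto x+\ve D_{x_n^\ve}e_i$ inside a cube. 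The boundary layer $\Lambda^\ve$ and the cubes near $\partial\Omega_W$ have measure $O(\ve^{1-r})+O(\ve^r)\to 0$, so they are negligible. This proves the first assertion. The statements for $\mathcal T^{\ast,\ve}_{\mL}$ follow in the same way, using $Y^\ast_x=D_xY^\ast$.

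\textbf{Step 3: the $W^{1,p}$ case.} When $w^\ve\rightharpoonup w$ in $W^{1,p}(\Omega_W)$, we define the rescaled fluctuation
\[
Z^\ve=\frac{1}{\ve}\Big(\mathcal T^\ve_{\mL}(w^\ve)-\mathcal M_Y\big(\mathcal T^\ve_{\mL}(w^\ve)\big)\Big).
\]
Poincaré--Wirtinger in $Y$, together with the gradient identity of Step~1, bounds $Z^\ve$ in $L^p(\Omega_W;W^{1,p}(Y))$. Its weak limit $\hat Z$ satisfies $\nabla_y \hat Z=D_x^{T}\big(\lim\mathcal T^\ve_{\mL}\nabla w^\ve\big)$. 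Subtracting the affine part $D_x^{T}\nabla w\cdot y$ (note $D_{x_n^\ve}y$ is the local coordinate), we show that the remainder is $Y$-periodic, which defines $w_1$. The strong convergence in $L^p_{\rm loc}(\Omega_W;W^{1,p}(Y^\ast))$ follows from the compact embedding $W^{1,p}\subset L^p$, which gives $w^\ve\to w$ strongly, combined with the estimate
\[
\|\mathcal T^\ve_{\mL}\phi-\phi\|_{L^p}\to 0 \quad\text{for smooth }\phi
\]
and the $L^p$-isometry of Step~1. For perforated domains we first apply the extension of Lemma~\ref{extension_local}.

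\textbf{Main obstacle.} The main obstacle is the periodicity identification in Steps~2 and~3 across the varying cells. The shift argument must stay within a single $\Omega_n^\ve$, and the mismatch between neighbouring cubes, where $D_{x_n^\ve}$ changes, must be shown to vanish. This is controlled by $\ve\ll\ve^r$ and the Lipschitz continuity of $D$. We would first test against $\phi(x)\psi(y)$ with $\phi\in C_0^\infty(\Omega_W)$ and carry out the shift cube by cube, bounding the boundary-strip contributions by $C\ve^{1-r}$.
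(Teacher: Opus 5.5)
The paper does not prove this theorem. It imports it from \cite{Ptashnyk_2015}, noting that it holds analogously to the periodic case, and your cube-by-cube reduction to periodic unfolding is in that spirit. Steps~1 and~2 and your strong-convergence argument are sound. In Step~2 the strips really are negligible: the face traces of $\T^\ve_{\mL}(w^\ve)$ are bounded in $L^p$, and the strips have total measure $O(\ve^{1-r})$.

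The gap is in Step~3, exactly where you place the main obstacle, and there the proposed remedy fails. Because of the factor $1/\ve$ in $Z^\ve$, shifting by $\ve D_{x_n^\ve}e_i$ inside $\hat\Omega_n^\ve$ and changing variables leaves, in every cube, the terms
\[
\ve^{-1}\int_{L_n^+}\T^\ve_{\mL}(w^\ve)\,\phi\,dx-\ve^{-1}\int_{L_n^-}\T^\ve_{\mL}(w^\ve)\,\phi(\cdot-\ve D_{x_n^\ve}e_i)\,dx .
\]
Here $y$ lies on the face $\{y_i=-\tfrac12\}$, $L_n^{+}$ and $L_n^{-}$ are the last and first layers of cells of $\hat\Omega_n^\ve$ in direction $e_i$, and the factor $\psi(y)$ is suppressed. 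Summed over $n$, each of the two terms has size $\ve^{-1}\cdot\ve^{1-r}=\ve^{-r}$, not $\ve^{1-r}$. Their difference is of order one. Replace the face traces by cell averages (the error is controlled by the bounded traces of $Z^\ve$) and sum by parts along lattice lines; the difference then converges to $\int_{\Omega_W}D_xe_i\cdot\nabla(w\phi)\,dx=-\int_{\Omega_W}w\,\phi\,\mathrm{div}(D_xe_i)\,dx$.

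If you drop these terms, the interior term $-\int_{\Omega_W}w\,D_xe_i\cdot\nabla\phi\,dx$ gives the jump $D_xe_i\cdot\nabla w+w\,\mathrm{div}(D_xe_i)$ of $\hat Z$ across opposite faces. The correct jump is $D_xe_i\cdot\nabla w$: check this with $w^\ve=w$ smooth, where $\hat Z=\nabla w\cdot D_xy$. So for a general Lipschitz $D$, your remainder $\hat Z-(D_x^T\nabla w)\cdot y$ would come out non-periodic. For the specific $D_x=R_xW(x)$ of this paper all columns happen to be divergence-free. Even then, the strip terms vanish only by cancellation between entry and exit layers, not by a measure bound.

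One repair is to localise with cut-offs $\phi_{\Omega_n^\ve}$ as in \eqref{ApproxCharactF}. Take them supported at distance $\gg\ve$ from $\partial\Omega_n^\ve$, with $|\nabla\phi_{\Omega_n^\ve}|\le C\ve^{-\rho}$ and $r<\rho<1$. Do this while still working with $Z^\ve$, whose bounded traces make the cut-off error vanish. The shift then stays inside each cube and produces no strips, but you must keep the terms involving differences of $\phi_{\Omega_n^\ve}$. In those terms, replace the face traces of $\T^\ve_{\mL}(w^\ve)$ by $w^\ve$: this costs $O(\ve)$ in $L^p$ against weights of $L^{p'}$-norm $O(\ve^{-\rho})$, hence $o(1)$. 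A discrete integration by parts then shows that these terms converge to $\int_{\Omega_W}D_xe_i\cdot\nabla(w\phi)\,dx$, which is exactly the missing contribution.

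Alternatively, avoid the shift in Step~3 altogether. Combine the l-p two-scale compactness for gradients (Theorem~\ref{lp_covregence}, or its $L^p$ analogue for $p\neq 2$) with the equivalence in Lemma~\ref{l-t-s-l-p-eq}. Your fourth assertion inherits the same issue, since you reduce it to the second one via extension. Note also that Lemma~\ref{extension_local} is stated for $\Omega^\ve_{MW}$, so you need its analogue for $\Omega^\ast_\ve$.
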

We also have  the equivalence between the l-p two-scale convergence and  weak convergence of the unfolded sequence.
\begin{lemma}\label{l-t-s-l-p-eq}\cite{Ptashnyk_2015}
 Let $\{w^\ve\}$ be a bounded sequence in $L^p(\Omega_W)$, where  $p \in (1, +\infty)$.
 Then the following assertions  are equivalent
\begin{itemize}
\item[ (i)] $\quad \qquad \qquad w^\ve \to  w  \qquad \qquad  \; \text{ l-p two-scale},  \quad \qquad w \in L^p(\Omega_W; L^p(Y_x)),$
\item[(ii)] $ \quad\;  \T_{\mL}^{\ve}(w^\ve)(\cdot, \cdot)  \rightharpoonup  w(\cdot, D_x\cdot)  \quad \text{weakly  in } L^p(\Omega_W\times Y).$
\end{itemize}
\end{lemma}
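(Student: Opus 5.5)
The equivalence is proved by moving the test function from one side to the other with an exact change of variables, the l-p analogue of the periodic identity $\int_\Omega w^\ve \psi(x,x/\ve)\,dx \approx \int_{\Omega\times Y}\mathcal T^\ve(w^\ve)\,\psi\,dx\,dy$.

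\textbf{Step 1: integration formula.} Using the definition of $\T_{\mL}^\ve$ on each cube $\Omega_n^\ve$, I would first show that for $\phi\in L^1(\Omega_W)$
\[
\int_{\Omega_W\times Y}\T_{\mL}^\ve(\phi)(x,y)\,dx\,dy=\int_{\hat\Omega^\ve}\phi(x)\,dx .
\]
On each cell $\ve D_{x_n^\ve}(Y+\xi)$ the unfolded function is constant in $x$. The substitution $z=\ve D_{x_n^\ve}(\xi+y)$ has Jacobian $\ve^d|\det D_{x_n^\ve}|$, which equals the cell volume, and $\det D=1$. Summing over $\xi\in\hat\Xi_n^\ve$ and over $n$ gives the formula.

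Two further facts are needed. First, the remainder satisfies $|\Lambda^\ve|\le C(\ve^{1-r}+\ve^r)\to0$, since each cube has a boundary layer of relative volume $\ve/\ve^r$ and the partition has a neighbourhood of $\partial\Omega_W$ of measure $\ve^r$. Second, $\|\T_{\mL}^\ve(\phi)\|_{L^p(\Omega_W\times Y)}\le\|\phi\|_{L^p(\Omega_W)}$.

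\textbf{Step 2: unfolding of the test function.} For $\psi(x,y)=\tilde\psi(x,D_x^{-1}y)$ with $\tilde\psi\in C(\overline\Omega_W;C_{\rm per}(Y))$, I would show
\[
\T_{\mL}^\ve(\mathcal L^\ve\psi)(x,y)\to\tilde\psi(x,y)\quad\text{strongly in } L^{p'}(\Omega_W\times Y).
\]
Choose $\tilde x_n^\ve=\ve D_{x_n^\ve}k$. The periodicity of $\tilde\psi$ then gives
\[
\T_{\mL}^\ve(\mathcal L^\ve\psi)(x,y)=\tilde\psi\bigl(\ve D_{x_n^\ve}[D^{-1}_{x_n^\ve}x/\ve]_Y+\ve D_{x_n^\ve}y,\;y\bigr)\quad\text{on } \hat\Omega_n^\ve .
\]
The first argument differs from $x$ by $O(\ve)$, so uniform continuity of $\tilde\psi$ yields uniform convergence on $\hat\Omega^\ve$, whose measure tends to $|\Omega_W|$.

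\textbf{Step 3: the two implications.} By Step 1 and the estimate on $|\Lambda^\ve|$, bounded in $L^p$ by boundedness of $w^\ve$ and $\psi$,
\[
\int_{\Omega_W}w^\ve\,\mathcal L^\ve\psi\,dx=\int_{\Omega_W\times Y}\T_{\mL}^\ve(w^\ve)\,\T_{\mL}^\ve(\mathcal L^\ve\psi)\,dx\,dy+o(1).
\]
Combined with Step 2, this becomes
\[
\int_{\Omega_W}w^\ve\,\mathcal L^\ve\psi\,dx=\int_{\Omega_W\times Y}\T_{\mL}^\ve(w^\ve)(x,y)\,\tilde\psi(x,y)\,dx\,dy+o(1).
\]
In the limit, the substitution $y\mapsto D_xy$ (with $|Y_x|=1$) identifies
\[
\int_{\Omega_W}\ddashinttt_{Y_x}w\,\psi\,dy\,dx=\int_{\Omega_W\times Y}w(x,D_xy)\,\tilde\psi(x,y)\,dx\,dy .
\]

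For (ii)$\Rightarrow$(i): take any admissible $\psi$; weak convergence of $\T_{\mL}^\ve(w^\ve)$ against the strongly convergent $\T_{\mL}^\ve(\mathcal L^\ve\psi)$ gives the l-p two-scale limit. Density extends the statement from continuous $\tilde\psi$ to $L^{p'}(\Omega_W;C_{\rm per}(Y))$.

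For (i)$\Rightarrow$(ii): $\T_{\mL}^\ve(w^\ve)$ is bounded in $L^p(\Omega_W\times Y)$, so every subsequence has a weak limit $\hat w$. The identity above shows that $\int\hat w\,\tilde\psi$ equals $\int w(x,D_xy)\,\tilde\psi$ for all $\tilde\psi$ in a dense class. Hence $\hat w=w(\cdot,D_x\cdot)$, and the whole sequence converges.

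\textbf{Main obstacle.} The hardest part is the bookkeeping in Steps 1–2. Points $x$ near the faces of $\Omega_n^\ve$ do not lie in full cells, and $\mathcal L^\ve$ uses the cube-wise matrix $D_{x_n^\ve}$. Both issues must be controlled by $|\Lambda^\ve|\to0$ (which uses $r<1$) together with the Lipschitz continuity of $D$, exactly as in the construction preceding Definition~\ref{l-p-unf-oper}.
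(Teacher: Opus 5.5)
Your proposal is correct and follows the standard unfolding route: the exact integration formula on $\hat\Omega^\ve$, the bound $|\Lambda^\ve|\to 0$, the strong convergence $\T^\ve_{\mL}(\mathcal L^\ve\psi)\to\tilde\psi$, and then duality and density. This is the periodic-case argument transferred to the locally periodic setting, as in \cite{Ptashnyk_2015}, to which the paper defers the proof without giving one itself.

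Two points you leave implicit are harmless.
\begin{itemize}
\item The density step in (ii)$\Rightarrow$(i) uses the uniform bound $\|\mathcal L^\ve\psi\|_{L^{p'}(\Omega_W)}\le\|\tilde\psi\|_{L^{p'}(\Omega_W;C(Y))}$.
\item Lipschitz continuity of $D$ is not actually needed here. Only its boundedness is used, together with your choice $\tilde x_n^\ve\in\ve D_{x_n^\ve}\mathbb Z^3$, which aligns $\mathcal L^\ve\psi$ with the cells of $\T^\ve_{\mL}$.
\end{itemize}
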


Combining the a priori estimates from ~Lemma~\ref{estim:apriori} with compactness properties of  l-p two-scale convergence, see Theorem~\ref{lp_covregence}, and the  extensions, not relabeled, of $p^\ve_\alpha$,  $c^\ve$, $b^\ve$, and $b^\ve_s$ from $\Omega_M^\ve$ into $\Omega$,  with $\alpha = E,e, d$, given by~Lemma~\ref{extension_local},  we obtain the following convergence results.
\begin{lemma}\label{converg_2}
 There exist functions $\bu \in L^\infty(0,T; \cW(\Omega))$, $\bu_1 \in L^2(\Omega_{W,T}; H^1_{\rm per}(Y_x)/\mathbb R)^3)$,  $b_s \in L^\infty(\Omega_{ML,T})$, $b_s \in L^\infty(\Omega_{W,T}\times  Y_x)$,   $p_\alpha, c, b \in L^2(0,T; H^1(\Omega))\cap L^\infty(\Omega_T)$, and $p_{\alpha,1}, c_1, b_1 \in L^2(\Omega_{W,T}; H^1_{\rm per}(Y_x)/\mathbb R)$,  with $\alpha = e,E,d$,   such that, up to a subsequence,
 \begin{equation*}
 \begin{aligned}
 & \bu^\ve  \rightharpoonup  \bu \quad   \text{ weakly}^\ast \text{ in }  L^\infty(0,T;  \cW(\Omega)),  &&    \text{strongly in }  L^2(\Omega_T) \text{ and }  L^2((0,T)\times \partial \Omega),  \\
 &p_\alpha^\ve  \rightharpoonup  p_\alpha,\;  c^\ve  \rightharpoonup  c ,\;  b^\ve  \rightharpoonup b     &&  \text{weakly in }    L^2(0,T;H^1(\Omega)),  \\
 & &&   \text{strongly in } L^2(\Omega_T) \text{ and }  L^2((0,T)\times \partial \Omega), \\
  & \nabla \bu^\ve  \rightharpoonup \nabla \bu +  \nabla_y \bu_1,  \; \;\;\quad  \nabla b^\ve  \rightharpoonup \nabla b +  \nabla_y b_1 \hspace{-0.5 cm} &&   \hspace{2 cm}   \text{l-p two-scale},\\
 &\nabla p_\alpha^\ve  \rightharpoonup \nabla p_\alpha +  \nabla_y p_{\alpha,1}, \;
 \nabla c^\ve  \rightharpoonup \nabla c +  \nabla_y c_1   \hspace{-0.5 cm}  &&  \hspace{2 cm}   \text{l-p two-scale}, \\
 & b_s^\ve   \rightharpoonup  b_s   \qquad   \text{ weakly  in }  L^q(\Omega_{ML, T}),   &&  b_s^\ve   \rightharpoonup  b_s   \qquad   \text{l-p two-scale},
 \end{aligned}
 \end{equation*}
 as $\ve \to 0$, where $\alpha=e,E,d$, and any $1< q < \infty$.
 \end{lemma}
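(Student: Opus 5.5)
The plan is to combine the uniform bounds of Lemma~\ref{estim:apriori}, applied to the extensions from Lemma~\ref{extension_local}, with weak compactness, Theorem~\ref{lp_covregence}, and a Kolmogorov--Simon type argument in time.

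\emph{Uniform bounds for the extensions.} By \eqref{extension_local_2} and the remark after Lemma~\ref{extension_local}, the extended $p^\ve_\alpha$, $c^\ve$, $b^\ve$ are bounded in $L^2(0,T;H^1(\Omega))\cap L^\infty(\Omega_T)$ uniformly in $\ve$. The $L^\infty$ bound passes through the extension on each cell, using \eqref{extension_local_1} with $q\to\infty$, or equivalently a truncation argument. The extended $b^\ve_s$ solves \eqref{weak_sol_bs} with the extended $b^\ve$, so the proof of Lemma~\ref{estim:apriori} gives $b^\ve_s$ bounded in $L^\infty(\Omega_T)$ with $\partial_t b^\ve_s$ bounded in $L^2(\Omega_T)$. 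For the displacement, $\bu^\ve$ is bounded in $L^\infty(0,T;\cW(\Omega))$.

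\emph{Weak limits.} Weak and weak$^\ast$ compactness then give the limits $\bu$, $p_\alpha$, $c$, $b$. The bound in $L^\infty(\Omega_T)$ is weak$^\ast$ closed, so $p_\alpha,c,b\in L^\infty(\Omega_T)$. For the unbounded sequence $b^\ve_s$ we only get weak convergence in $L^q(\Omega_{ML,T})$ for every $1<q<\infty$. On $\Omega_W$ it is bounded in $L^2$, so Theorem~\ref{lp_covregence}, applied with $t$ as a parameter or on $\Omega_{W,T}$, gives l-p two-scale convergence to some $b_s\in L^2(\Omega_{W,T};L^2(Y_x))$. The $L^\infty$ bound is preserved: testing with $\mathcal L^\ve\psi$ for $\psi\ge0$ shows the limit is bounded by the same constant.

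\emph{Gradients.} The second part of Theorem~\ref{lp_covregence}, with $t$ treated as a parameter (as in the periodic case, since the estimates hold in $L^2(0,T;H^1)$), gives the two-scale limits $\nabla p_\alpha+\nabla_y p_{\alpha,1}$, $\nabla c+\nabla_y c_1$, $\nabla b+\nabla_y b_1$ and $\nabla\bu+\nabla_y\bu_1$ on $\Omega_{W,T}$. The correctors lie in $L^2(\Omega_{W,T};H^1_{\rm per}(Y_x)/\mathbb R)$.

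\emph{Strong convergence in time.} This is the main step. For $p^\ve_\alpha$, $c^\ve$, $b^\ve$ the time-shift estimates $\|\theta_hw^\ve-w^\ve\|_{L^2}\le Ch^{1/4}$ of Lemma~\ref{estim:apriori} transfer to the extensions. The extension operator is linear and acts only in $x$, so \eqref{extension_local_2} applies to $\theta_hw^\ve-w^\ve$. Together with the uniform $L^2(0,T;H^1(\Omega))$ bound and the compact embedding $H^1(\Omega)\subset L^2(\Omega)$, the Kolmogorov--Simon compactness theorem yields strong convergence in $L^2(\Omega_T)$. For the boundary we interpolate: the trace inequality \eqref{trace_GN_ineq} gives
\[
\|v\|_{L^2(\partial\Omega)}\le C\|v\|^{1/2}_{L^2(\Omega)}\|v\|^{1/2}_{H^1(\Omega)}+C\|v\|_{L^2(\Omega)},
\]
so after integrating in time and using the $H^1$ bound we get strong convergence in $L^2((0,T)\times\partial\Omega)$. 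The same reasoning applies to $\bu^\ve$, using the bound in $L^\infty(0,T;\cW(\Omega))$ and the shift estimate $\|\theta_h\bu^\ve-\bu^\ve\|_{L^2(0,T-h;\cW)}\to0$ as $h\to0$.

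The delicate point I expect is that $\cW(\Omega)$ is only $H^1$ in space, so Simon's theorem needs both the compact embedding $\cW(\Omega)\subset L^2(\Omega)^3$ and shift control in a weaker norm. The latter follows from the $\cW$-shift bound, which rests on the $L^\infty$ equicontinuity of $b^\ve$ already proved in Lemma~\ref{estim:apriori}. Finally, since every weak limit is unique along the chosen subsequence, the strong and weak limits coincide, and passing to a common subsequence completes the proof.
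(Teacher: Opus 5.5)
Your proposal is correct and follows the same route as the paper's proof. It uses the uniform bounds of Lemma~\ref{estim:apriori} for the extensions from Lemma~\ref{extension_local}, weak and l-p two-scale compactness from Theorem~\ref{lp_covregence}, and the time-shift estimates combined with Simon's compactness theorem, plus the trace inequality, for the strong convergence in $L^2(\Omega_T)$ and $L^2((0,T)\times\partial\Omega)$. You supply details the paper leaves implicit. Two small corrections: $b^\ve_s$ is not unbounded, it only lacks spatial gradient bounds, hence the weak and two-scale limits. And the truncation argument is the safe way to keep the $L^\infty$ bound for the extensions, rather than letting $q\to\infty$ in \eqref{extension_local_1}, where $\mu$ may depend on $q$.
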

\begin{proof}
The a priori estimates   in Lemma~\ref{estim:apriori} and  Theorem~\ref{lp_covregence}  imply the weak and l-p two-scale convergence.
Additionally, the equicontinuity with respect to the time variable, ensured by the a priori estimates, and the compactness result in~\cite{Simon} yield the strong convergence in $L^2((0,T)\times \Omega)$ and $L^2((0,T)\times \partial \Omega)$.
\end{proof}

Using the convergence results in Lemma~\ref{converg_2}, we first derive the macroscopic model for elastic deformations of  plant tissue.
\begin{theorem}
A sequence of  solutions $\{\bu^\ve\}\subset L^\infty(0,T; \cW(\Omega))$ of microscopic model~\eqref{EQST}  converges to a   solution $\bu \in  L^\infty(0,T; \cW(\Omega))$ of the macroscopic problem
\begin{equation}\label{eq:macro_u}
\begin{aligned}
&-{\rm div} ( \mathbb E^{\rm hom}(x, b) \be(\bu)) ={\bf 0} && \text{ in } \; \Omega, \\
& \mathbb E^{\rm hom}(x, b) \be(\bu) \cdot \nu = - P(t,x) \nu && \text{ on } \; \Gamma_\cI,\\
& \mathbb E^{\rm hom}(x, b) \be(\bu) \cdot \nu =  \bff(t,x) && \text{ on } \; \Gamma_\cE,
\end{aligned}
\end{equation}
where  the  homogenized elasticity tensor is given by $\bbE^{\text{hom}}(x, b)= \bbE^{\text{hom}}_W(x,b)\chi_{\Omega_W}(x) + \bbE_{M_l}(b)\chi_{\Omega_{ML}}(x)$ and
$$
\bbE^{\text{hom}}_{W, ijkl}(x, b)
=\ddashinttt_{\hat Z_x}\!\! \big( \tilde{\mathbb B}_{ijkl}(x,\hat y, b) +\tilde{\mathbb B}(x,\hat y, b) \hat \be^R_{\hat y}(\tilde \omega_{ij})_{kl} \big) d\hat y, \quad \text{ for } 1\leq i,j,k,l \leq 3,
$$
with $\tilde{\mathbb B}(x, y, b)= \bbE_f\hat \vartheta(x, y) +\bbE_{M_w}(b)(1- \hat \vartheta(x,  y))$ and $\hat y= (y_2,  y_3)$, $\hat Z_x = Z_x \cap \{ y_1 =0\}$, and $\hat \vartheta$ is as in~\eqref{charact_l_p}. Here $\tilde \omega_{ij}$, for $1\leq i,j \leq 3$, are solutions of the cell problems~\eqref{UnitCellNonPer}.
Note that, since $\hat \vartheta$ does not depend on $y_1$, we identify $\tilde{\mathbb B}(x, y, b)$ with $\tilde{\mathbb B}(x,\hat y, b)$.
Additionally,
\begin{equation}\label{strong_conv_u}
\begin{aligned}
 \be(\bu^\ve) &\to \be(\bu) +\be_y(\bu_1) \quad && \text{strongly l-p two-scale}, \\
 \cT_\mL^\ve(\be(\bu^\ve))&\to \be(\bu) +\be_y(\bu_1) \quad && \text{strongly in } \; L^2((0,T)\times \Omega_W\times Y), \\
 \be(\bu^\ve) &\to \be(\bu) \quad && \text{strongly in } \; L^2((0,T)\times \Omega_{ML}).
 \end{aligned}
\end{equation}
\end{theorem}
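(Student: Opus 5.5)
We pass to the limit in the weak formulation~\eqref{weak_sol_u} along the subsequence of Lemma~\ref{converg_2}. First we approximate the non-periodic characteristic function $\chi_{\Omega_f^\ve}$ by its locally periodic counterpart $\mathcal L^\ve_0\hat\vartheta$. By~\eqref{Taylor_Approx} and~\eqref{estim_non_local_1}, $\|\chi_{\Omega_f^\ve}-\mathcal L^\ve_0 \hat\vartheta\|_{L^2(\Omega_W)}\to 0$ for $r\in(2/3,1)$. Since $\be(\bu^\ve)$ is only bounded in $L^2$, the error term $\langle (\bbE_f-\bbE_M)(\chi_{\Omega_f^\ve}-\mathcal L^\ve_0\hat\vartheta)\be(\bu^\ve),\be(\boldsymbol\psi)\rangle$ has to be controlled with an $L^\infty$ test field. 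For this we use the uniform bound on $b^\ve$ and the fact that the difference of characteristic functions is bounded by $1$ and tends to $0$ in measure, so its product with a fixed smooth $\be(\boldsymbol\psi)$ tends to zero strongly in $L^2$. The nonlinear dependence on $b^\ve$ is handled with the strong convergence $b^\ve\to b$ in $L^2(\Omega_T)$ from Lemma~\ref{converg_2}, the uniform $L^\infty$ bound, and $\bbE_{M_j}\in C^1$. Together these give $\bbE_{M}(x,b^\ve)\to\bbE_M(x,b)$ strongly in every $L^p$, $p<\infty$.

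Next we choose test functions $\boldsymbol\psi(t,x)+\ve\,\mathcal L^\ve_\rho\boldsymbol\psi_1$ with $\boldsymbol\psi\in C^1$ and $\boldsymbol\psi_1\in C_0^\infty(\Omega_W; C^1_{\rm per}(Y_x))$, using the regularised approximation so that the gradient of the corrector test function is $\mathcal L^\ve\nabla_y\boldsymbol\psi_1+O(\ve^{1-\rho})$ by~\eqref{ApproxCharactF}. Using the l-p two-scale convergence $\nabla\bu^\ve\to\nabla\bu+\nabla_y\bu_1$ (Theorem~\ref{lp_covregence}) together with the strong convergence of the coefficients, we obtain the two-scale limit problem
\[
\int_{\Omega_{W,T}}\ddashinttt_{Y_x}\tilde{\mathbb B}(x,y,b)\big(\be(\bu)+\be_y(\bu_1)\big)\cdot\big(\be(\boldsymbol\psi)+\be_y(\boldsymbol\psi_1)\big)dydxdt+\int_{\Omega_{ML,T}}\bbE_{M_l}(b)\be(\bu)\cdot\be(\boldsymbol\psi)\,dxdt
\]
equal to the boundary terms. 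The boundary data pass to the limit trivially.

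Taking $\boldsymbol\psi=0$ yields the cell problem. Since $\hat\vartheta$ is independent of $y_1$, $\bu_1$ can be taken independent of $y_1$. This gives $\bu_1=\sum\tilde\omega_{ij}\,\be(\bu)_{ij}$ with $\tilde\omega_{ij}$ solving~\eqref{UnitCellNonPer} on $\hat Z_x$; $\hat\be^R_{\hat y}$ is the symmetric gradient in $\hat y$ transported through $D_x$. Substituting back gives~\eqref{eq:macro_u} with $\bbE^{\rm hom}$. The coercivity of $\bbE^{\rm hom}$ and Korn's inequality~\eqref{Korn_in} give uniqueness, so the whole sequence converges.

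The main obstacle is the strong convergence~\eqref{strong_conv_u}, which we need later for the nonlinear reaction terms. We plan an energy argument with the unfolding operator. By Lemma~\ref{l-t-s-l-p-eq}, $\cT_\mL^\ve(\be(\bu^\ve))\rightharpoonup\be(\bu)+\be_y(\bu_1)(\cdot,D_x\cdot)$ weakly. We then compute
\[
\int_{\Omega_{W,T}\times Y}\cT^\ve_\mL(\bbE^\ve)\big(\cT^\ve_\mL\be(\bu^\ve)-\be(\bu)-\be_y(\bu_1)\big)\cdot\big(\cdot\big)+\int_{\Omega_{ML,T}}\bbE_{M_l}(b^\ve)\big(\be(\bu^\ve)-\be(\bu)\big)\cdot\big(\cdot\big).
\]
This is bounded below by $\omega_E$ times the squared norms. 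Expanding it, the quadratic term equals the boundary work $-\langle P\nu,\bu^\ve\rangle+\langle\bff,\bu^\ve\rangle$, up to the small contribution of $\Lambda^\ve$ and of $\mathcal K^\ve$, which is controlled by equi-integrability. This boundary work converges by the strong trace convergence of $\bu^\ve$ in Lemma~\ref{converg_2} to the corresponding limit work, which in turn equals the limit energy. The mixed terms converge because $\cT^\ve_\mL(\bbE^\ve)$ converges strongly: here we use the locally periodic approximation estimate for the fibre indicator and $b^\ve\to b$ strongly. Hence the expression tends to zero. This yields strong convergence of the unfolded strains in $L^2$, which is equivalent to strong l-p two-scale convergence. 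Strong convergence in $\Omega_{ML}$ follows from the same identity. The delicate point is justifying that the unfolded non-periodic coefficient converges strongly to $\tilde{\mathbb B}(x,D_x y,b)$; there we would rely on~\eqref{estim_non_local_1} with $r>2/3$.
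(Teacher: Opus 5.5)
Your derivation of the limit problem follows the paper's proof. You replace $\chi_{\Omega_f^\ve}$ by its locally periodic approximation at a cost $C\ve^{3r/2-1}$ via \eqref{estim_non_local_1}, test with $\boldsymbol\psi+\ve\mathcal L^\ve_\rho\boldsymbol\psi_1$, and then use the corrector ansatz and the reduction to $\hat Z_x$. One small slip: the change of variables $Y_x\to Z_x$ is through $R_x$, not $D_x$.

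For \eqref{strong_conv_u} you reverse the paper's order.
\begin{itemize}
\item The paper first shows that $\langle\bbE^\ve(x,b^\ve)\be(\bu^\ve),\be(\bu^\ve)\rangle_{\Omega_T}$ converges to the limit energy. It then deduces strong l-p two-scale convergence by comparing $\bbE^\ve(x,b^\ve)^{1/2}\be(\bu^\ve)$ with $\mathbb B(x,y,b)^{1/2}(\be(\bu)+\be_y(\bu_1))$. Only afterwards does it get strong convergence of $\cT^\ve_\mL(\be(\bu^\ve))$, by expanding the square and using $\|\cT^\ve_\mL v\|^2_{L^2(\Omega_W\times Y)}\le|Y|\,\|v\|^2_{L^2(\Omega_W)}$.
\item You work directly with the $\cT^\ve_\mL(\bbE^\ve)$-weighted unfolded functional. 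Every limit is then a strong-times-weak product on the fixed set $(0,T)\times\Omega_W\times Y$, and you need neither square roots of tensors nor l-p two-scale limits of products.
\end{itemize}
The price of your route is the strong convergence of $\cT^\ve_\mL(\bbE^\ve(x,b^\ve))$, which you correctly single out. It does hold, for three reasons. The same bound on $\cT^\ve_\mL$ disposes of $\chi_{\Omega_f^\ve}$ minus its l-p approximation. Next, $\cT^\ve_\mL(b^\ve)\to b$ follows from $b^\ve\to b$ in $L^2$. Finally, on each $\hat\Omega^\ve_n$ the unfolded l-p indicator equals $\hat\vartheta(x_n^\ve,W(x_n^\ve)y)$, which converges by continuity of $W$. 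This last point requires that the unfolding cells in $\Omega^\ve_n$ lie on the same lattice $x_n^\ve+\ve D_{x_n^\ve}\mathbb Z^3$ as the l-p fibre pattern. Otherwise the unfolded indicator carries an $n$-dependent translation in $y$ and converges only weakly.

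Three steps need repair; none is fatal.

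(i) You cannot control the energy of $\be(\bu^\ve)$ on $\Omega_W\setminus\hat\Omega^\ve$ by equi-integrability. Equi-integrability of $|\be(\bu^\ve)|^2$ is not available a priori; it is essentially what you are proving. Use the sign instead. Since $|Y|=1$, three terms add up exactly to the boundary work: your unfolded quadratic term, the $\Omega_{ML}$ term, and $\int_0^T\!\int_{\Omega_W\setminus\hat\Omega^\ve}\bbE^\ve(x,b^\ve)\be(\bu^\ve)\cdot\be(\bu^\ve)\,dx\,dt$. Keep this last, nonnegative term on the left-hand side; then both it and your functional tend to zero.

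(ii) Strong convergence of $\cT^\ve_\mL(\be(\bu^\ve))$ alone is not equivalent to strong l-p two-scale convergence. You also need $\|\be(\bu^\ve)\|_{L^2((0,T)\times(\Omega_W\setminus\hat\Omega^\ve))}\to0$, which is exactly what (i) gives. Note also that $\cT^\ve_\mL(\bbE^\ve)$ vanishes off $\hat\Omega^\ve\times Y$, so coercivity controls the unfolded error only there. The remainder is the norm of $\be(\bu)+\be_y(\bu_1)$ on a set of vanishing measure.

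(iii) Uniqueness of \eqref{eq:macro_u} for given $b$ does not give convergence of the whole sequence, because $b$ is itself only a subsequential limit. Whole-sequence convergence would need uniqueness for the fully coupled limit system, and the statement does not require it.
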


\begin{proof}
To derive the macroscopic problem using the locally-period (l-p) two-scale convergence method, we  rewrite  equation~\eqref{weak_sol_u}  in the form
\begin{equation}\label{aprox_eq_u}
\begin{aligned}
\big\langle\bbE^\ve_{N^\ve}(x, b^\ve) \be(\bu^\ve), \be (\phi)\big\rangle_\Omega  + \big\langle[\bbE^\ve(x, b^\ve)- \bbE^\ve_{N^\ve}(x, b^\ve)] \be(\bu^\ve) ,\be (\phi) \big\rangle_\Omega
\\
= \langle \bff, \phi \rangle_{\Gamma_{\cE}} - \langle P \nu,  \phi \rangle_{\Gamma_{\cI}},
 \end{aligned}
\end{equation}
for $\phi \in L^2(0,T; C^1(\overline\Omega)\cap \cW(\Omega))$, where
$$\bbE^\ve_{N_\ve}(x, b^\ve) = (\mathcal L^\ve_0 \mathbb{B})(x, b^\ve)\, \chi_{\Omega_W}(x) +  \bbE_{M_l}(b^\ve) \, \chi_{\Omega_{ML}}(x)$$
and $(\mathcal L^\ve_0 \mathbb{B})(\cdot, b^\ve)$ is a locally periodic approximation of the tensor
$$\mathbb{B}(x,y, b^\ve)=  \bbE_f\hat \vartheta(x,R^{-1}_{x}y) +\bbE_{M_w}(b^\ve)(1- \hat \vartheta(x,R^{-1}_{x}y)) \quad \text{  for } x\in \Omega,  \, y\in Y_x.
$$
Here $Y_x= D(x)Y$,   with   $D(x)=R(\gamma(x_3))W(x)$ given by~\eqref{eq:W}, and $R_{x}:= R(\gamma(x_3))$. Note that assumptions on  $\bbE_{M_w}$ and $\gamma$ and the definition of the characteristic function $\hat \vartheta$ imply   that $\mathbb{B}(\cdot, \cdot, b) \in C(\overline\Omega, L^p_{\rm{per}}(Y_x))$ for $1< p < \infty$,  $B(x,y, \cdot) \in C^1(\mathbb R)$ for $x\in \Omega$,  $y\in Y_x$, and $|B(x,y,b)|\leq M_B$  for $x\in \Omega$,  $y\in Y_x$,  $b \in L^\infty((0,T)\times \Omega)$, and some $M_B>0$.

 To estimate $\bbE^\ve(x, b^\ve)- \bbE^\ve_{N^\ve}(x, b^\ve)$,  we use~\eqref{estim_non_local_1}, together with    the definition of  $\bbE^\ve$,  $\bbE^\ve_{N^\ve}$,     and  $
\chi_{\Omega_f^\ve}(x)$,  and obtain
\begin{equation}\label{estimnon-perLoc}
\int_{\Omega} \big|\big(\bbE^\ve(x, b^\ve) - \bbE^\ve_{N_\ve}(x, b^\ve)\big) \be(\bu^\ve) \be (\phi)\big| dx\leq C \ve^{\frac{3r}2 -1} \|\bu^\ve\|_{\cW(\Omega)} \|\phi \|_{W^{1,\infty}(\Omega)},
 \end{equation}
for $\phi \in C^1(\overline \Omega)$ and $t \in (0,T)$,  which, since $\{\bu^\ve\}$ is bounded in $L^\infty(0,T; \cW(\Omega))$,   for  $2/3 <r<1$ converges to zero as $\ve \to 0$.

Considering $ \psi(t,x)= \psi_1(t,x) + \ve(\mathcal L^\ve_\rho\psi_2)(t,x)$,  where  $\psi_1\in C^1_0(0,T; C^1(\overline\Omega)\cap \cW(\Omega))$ and  $\psi_2\in C^{1}_0((0,T)\times\Omega_W; C^1_{\text{per}}(Y_x))$, extended by zero into $\Omega_{ML}$,
as a  test function in~\eqref{aprox_eq_u}, we rewrite the first term as
$$
\big\langle \bbE^\ve_{N^\ve}(x, b^\ve)\,  \be(\bu^\ve),  \be(\psi_1)+
\ve \mL^\ve \be(\psi_2) + \ve \sum\limits_{n=1}^{N_\ve}\big(\be(\psi_2^{n})[\phi_{\Omega_n^\ve} -  \chi_{\Omega_n^\ve}]
+   \psi_2^{n} \odot \nabla \phi_{\Omega_n^\ve}\big) \big\rangle_{\Omega_T},
$$
 where $\psi_2^n(t,x)=\tilde\psi_2 (t,x,{R^{-1}_{x_n^\ve} x/\ve})$ and $a\odot b= \big(\frac 12(a_i b_j+ a_j b_i)\big)_{1\leq i, j\leq 3}$  for $a, b \in \mathbb R^3$.  Using the regularity of $\psi_2$, strong convergence of $b^\ve$, and  $\ve \nabla \psi_2^{n}=\ve\nabla_x \psi_2^{n}+ R^{-T}_{x_n^{\ve}}\nabla_{\tilde y} \psi_2^n$,   together with the  results for the l-p two-scale convergence, see~\cite{Ptashnyk_2013}, we obtain the strong l-p two-scale convergence of $\ve \mL^\ve \be(\psi_2)$ and $\mathcal L^\ve_0 \mathbb B(\cdot, b^\ve)$ to
$\be_y(\phi_2)$ and $\mathbb B(\cdot, \cdot, b)$, respectively. Then, since $b^\ve$ is bounded, it follows that  $\Phi^\ve(t,x)=\mathcal L^\ve_0 \mathbb B(x, b^\ve) (\be(\psi_1(t,x))+ \ve \mL^\ve \be(\psi_2)(t,x) )
$ is  bounded in  $L^q(\Omega_T)$, for any $1<q<\infty$, and converges strongly l-p  two-scale to $\mathbb B(x,y, b)(\be(\psi_1(t,x)) + \be_y(\psi_2(t,x,y)))$.
The convergence \eqref{ApproxCharactF} of  $\phi_{\Omega_n^\ve}$ in $L^2(\Omega)$, the   regularity of $\psi_2$,  the estimates for $\bu^\ve$, and  the boundedness of $\bbE^\ve_{N^\ve}(x, b^\ve)$, ensured by the  boundedness of $b^\ve$,   yield that
\begin{eqnarray*} 
 && \lim\limits_{\ve\to 0}  \ve \! \int_0^T\!\!\!\int_{\Omega }  \bbE^\ve_{N^\ve}(x, b^\ve) \,   \be(\bu^\ve)
\sum\limits_{n=1}^{N_\ve} \be(\psi_2^{n}) (\phi_{\Omega_n^\ve} - \chi_{\Omega_n^\ve}) dxdt =0.
\end{eqnarray*}
 The property  $||\nabla \phi_{\Omega_n^\ve}||_{L^\infty(\mathbb R^d)}\leq C \ve^{-\rho}$, where
$0<r<\rho<1$, ensures
\begin{equation*} 
 \lim\limits_{\ve\to 0} \ve \! \int_0^T \!\!\!\int_{\Omega }\bbE^\ve_{N^\ve}(x, b^\ve) \, \be(\bu^\ve)  \sum\limits_{n=1}^{N_\ve} \psi_2^{n} \odot\nabla \phi_{\Omega_n^\ve}  dxdt=0.
\end{equation*}
Thus,  using the l-p two-scale convergence of  $\nabla \bu^\ve$, the strong  convergence of $b^\ve$,  the strong l-p two-scale convergence of $\Phi^\ve$, and estimate~\eqref{estimnon-perLoc}, yields, as $\ve \to 0$,
\begin{equation}\label{macro_1}
\begin{aligned}
\big\langle  |Y_x|^{-1} \mathbb B(x, y, b) \big(\be(\bu) + \be_y(\bu_1)\big), \be(\psi_1)+
\be_y(\psi_2)\big\rangle_{\Omega_{W,T},  Y_x}  \qquad \\
+ \big\langle \bbE_{M_l}(b)\be(\bu), \be(\psi_1)\big\rangle_{\Omega_{ML, T}}
  = \langle \bff, \psi_1\rangle_{\Gamma_{\cE, T}}- \langle  P\nu, \psi_1\rangle_{\Gamma_{\cI, T}}.
\end{aligned}
\end{equation}
Considering $\psi_2=0$, we obtain
\begin{equation}\label{macro_13}
\begin{aligned}
 & \big\langle | Y_x|^{-1} \mathbb B(x, y, b) \big(\be(\bu) + \be_y(\bu_1)\big), \be(\psi_1)\big\rangle_{\Omega_{W,T},  Y_x}
 \\
 & +  \big\langle \bbE_{M_l}(b)\be(\bu), \be(\psi_1)\big\rangle_{\Omega_{ML,T}}
 =  \langle \bff, \psi_1\rangle_{\Gamma_{\cE,T}}- \langle  P \nu , \psi_1\rangle_{\Gamma_{\cI, T}}.
 \end{aligned}
\end{equation}
Choosing $\psi_1=0$ implies the equation for $\bu_1(t,x,y)$, for $(t,x) \in (0,T)\times \Omega_W$ and $y \in Y_x$,
\begin{equation}\label{macro_12}
 \big\langle  |Y_x|^{-1} \mathbb B(x, y, b) \big(\be(\bu) + \be_y(\bu_1)\big),
\be_y(\psi_2)\big\rangle_{\Omega_{W, T},  Y_x}=0.
\end{equation}
The  transformation $\mathcal F:  Y_x \to Z_x$, i.e. $\tilde y= \mathcal F(y)=R_{x}^{-1}y$, in \eqref{macro_12} gives
\begin{eqnarray*}
&&\big\langle |Z_x|^{-1} \tilde {\mathbb B}(x,\tilde y, b) \big(\be(\bu) + \be^R_{\tilde y}(\tilde \bu_1)
\big), \be^R_{\tilde y}(\tilde \psi_2)\big \rangle_{\Omega_{W,T}, Z_x}
= 0,
\end{eqnarray*}
with
$\tilde{\mathbb B}(x,\tilde y, b)= \bbE_f\hat \vartheta(x,\tilde y) +\bbE_{M_w}(b)(1- \hat \vartheta(x, \tilde y))$, and
\begin{equation}\label{def_trans_grad}
\be^R_{\tilde y,kl}(v)=\frac 12 \Big(\big[{R}^{-T}_{x} \nabla_{\tilde y}  v^l\big]_k
+\big[R^{-T}_{x} \nabla_{\tilde y}  v^k\big]_l \Big).
\end{equation}
Using  linearity of the problem we  conclude that  $\tilde \bu_1$  has the form
\begin{equation}\label{eq:u1}
\tilde \bu_1(t,x,\tilde y)= \frac 12\sum\limits_{i,j=1}^3 \Big( \frac{\partial \bu^i(t,x)}{\partial x_j} + \frac{\partial \bu^j(t,x)}{\partial x_i} \Big)
\tilde \omega_{ij}(t,x,\tilde y),
\end{equation}
where $\tilde \omega_{ij}(x,\tilde y)$ are solutions of the  unit cell problems
\begin{equation}\label{unit_loc_per}
\begin{aligned}
-\nabla_{\tilde y}\cdot \big(
  {R}_{x}^{-1} \tilde {\mathbb B} (x, \tilde y, b)  \be^R_{\tilde y}(\tilde \omega_{ij}) \big)
  =  \nabla_{\tilde y}\cdot \big({R}_{x}^{-1} \tilde{\mathbb B}(x,\tilde y, b) l_{ij} \big) & \quad \text{ in }  Z_x, \\
 \tilde{\omega}_{ij} \hspace{1.0 cm} \text{ periodic } & \quad \text{ in }  Z_x,
 \end{aligned}
\end{equation}
with $l_{ij}= \frac 12 (l_i \otimes l_j + l_j \otimes l_i)$,  where $\{l_i\}_{1\leq i\leq 3}$ is the canonical basis in $\mathbb R^{3}$.
Notice that $\hat\vartheta$  is independent of $\tilde y_1$ and  solutions of~\eqref{unit_loc_per} are unique up to a constant. Thus,  $\tilde\omega_{ij}$, for $i,j=1,2,3$, do not depend on $\tilde y_1$ and  the  unit cell problems~\eqref{unit_loc_per} can be reduced to the two-dimensional problems
\begin{equation}\label{UnitCellNonPer}
\begin{aligned}
 -\nabla_{\hat y}\cdot \big(
  \hat R_{x}^{-1} \tilde{\mathbb  B}(x,\hat y,b) \hat \be^R_{\hat y} (\tilde \omega_{ij}) \big)
 =  \nabla_{\hat y}\cdot \big(\hat R_{x}^{-1}  \tilde{\mathbb B}(x,\hat y,b) l_{ij} \big) &&  \text{ in } \hat Z_x,\\
\tilde{\omega}_{ij} \hspace{1.0 cm} \text{ periodic } &&   \text{ in } \hat Z_x,
\end{aligned}
\end{equation}
where  $\hat y=(\tilde y_2, \tilde y_3)$     and the matrices   $\hat \be^R_{\hat y}(v)$ and $\hat R_{x}^{-1}$  are given by
 \begin{equation*}
\hat \be_{\hat y,kl}^R(v)=\frac 12 \Big[\big(\hat {R}^{-T}_{x}
\nabla_{\hat y} v^l\big)_k +\big(\hat {R}^{-T}_{x} \nabla_{\hat y} v^k\big)_l \Big], \;\;
\hat{R}^{-1}_{x}= \begin{pmatrix}
-\sin(\gamma(x_3)) &
 \cos (\gamma(x_3))&  0 \\
 0&  0& 1
\end{pmatrix}.
\end{equation*}
Taking into account  the transformation $\mathcal F$ and the ansatz~\eqref{eq:u1} for $\tilde\bu_1$ in \eqref{macro_13} and choosing  first $\psi_1 \in C^1_0((0,T)\times \Omega)$ and then $\psi_1 \in C^1([0,T]\times \overline\Omega)$, with $\psi_1 =0$  on  $(0,T)\times \Gamma_{\cI}$ and subsequently on $(0,T)\times (\partial\Omega \setminus\Gamma_{\cI})$,  we obtain the macroscopic problem~\eqref{eq:macro_u}.
Assumption~\ref{assumptions}.5 on $\bbE_f$, $\bbE_{M_w}$, and $\bbE_{M_l}$, ensure that $\bbE^{\rm hom}$  possesses major and minor symmetries,  satisfies the ellipticity property on the set of symmetric matrices, and is bounded since  $b$ is bounded. Then, using the standard arguments, we obtain uniqueness of solutions of the macroscopic problem~\eqref{eq:macro_u}, for a given $b \in L^\infty((0,T)\times \Omega)$.

In order to pass to the limit in the nonlinear functions $Q_l(b^\ve, \be(\bu^\ve))$, for $l=b,s$,  we now prove the strong l-p two-scale convergence of $\be(\bu^\ve)$.  Using $\bu^\ve$ as a test function    in~\eqref{weak_sol_u}
and combining   the lower-semicontinuity of the norm, the weak and l-p two-scale convergence of $\bu^\ve$, the properties  of $\bbE^\ve(x, b^\ve)$, and the boundedness  and strong convergence of $b^\ve$, which imply the l-p two-scale convergence of $\big(\bbE^\ve(x, b^\ve)\big)^{\frac 12} \be(\bu^\ve)$ in $(0,T)\times \Omega_W$  and the weak convergence of
$\big(\bbE_{M_l}(b^\ve)\big)^{\frac 12}\be(\bu^\ve)$ in $(0,T)\times \Omega_{ML}$,  we obtain
\begin{equation}
\begin{aligned}
 \big\langle |Y_x|^{-1}  \mathbb B(x, y, b) (\be(\bu) +   \be_{y}(\bu_1)), \be(\bu) +  \be_{y}(\bu_1) \big\rangle_{\Omega_{W,T},  Y_x}  + \big\langle \mathbb E_{M_l}(b) \be(\bu) , \be(\bu)\big\rangle_{\Omega_{ML,T}}
\\\leq \liminf_{\ve \to 0 } \, \langle \bbE^\ve(x, b^\ve) \be(\bu^\ve), \be(\bu^\ve) \rangle_{\Omega_{T}}
 \leq \limsup_{\ve \to 0 } \, \langle \bbE^\ve(x, b^\ve) \be(\bu^\ve), \be(\bu^\ve)  \rangle_{\Omega_T}
 \\ =
\lim\limits_{\ve \to 0} \big[\langle  \bff, \bu^\ve  \rangle_{\Gamma_{\cE, T}}  - \langle  P \nu, \bu^\ve  \rangle_{\Gamma_{\cI, T}} \big]
 =
\langle  \bff, \bu \rangle_{\Gamma_{\cE, T}}  - \langle  P \nu, \bu  \rangle_{\Gamma_{\cI, T}} .
\end{aligned}
\end{equation}
Taking $\bu$ and $\bu_1$ as  test functions  in  \eqref{macro_13} and \eqref{macro_12} respectively, and adding the resulting equations,  yields
 \begin{equation*}
\begin{aligned}
 \langle  \bff, \bu \rangle_{\Gamma_{\cE, T} }  - \langle  P \nu, \bu \rangle_{\Gamma_{\cI, T}} =
\big \langle |Y_x|^{-1} \mathbb B(x, y, b) (\be(\bu) +  \be_y(\bu_1)), \be(\bu) + \be_y(\bu_1)\big \rangle_{\Omega_{W,T},  Y_x} \\
+ \big\langle \mathbb E_{M_l}(b) \be(\bu) , \be(\bu)  \big\rangle_{\Omega_{ML,T}}.
 \end{aligned}
\end{equation*}
Hence we obtain
\begin{equation}\label{strong_ts_1}
\begin{aligned}
 & \lim_{\ve \to 0 } \langle \bbE^\ve(x, b^\ve) \be(\bu^\ve), \be(\bu^\ve) \rangle_{\Omega_T} = \big\langle \mathbb E_{M_l}(b) \be(\bu) , \be(\bu)  \big\rangle_{\Omega_{ML,T}}\\
&  \quad
+ \big\langle |Y_x|^{-1}  \mathbb B(x, y, b) (\be(\bu) +  \be_{y}(\bu_1)), \be(\bu) +  \be_{y}(\bu_1) \big\rangle_{\Omega_{W,T},  Y_x}.
 \end{aligned}
\end{equation}
Combining the positive definiteness   and boundedness of $\bbE^\ve$ and $\mathbb B$,   the strong convergence of $b^\ve$,   convergence~\eqref{estim_non_local_1}, and  the definitions of $\bbE^\ve$ and $\mathbb B$,  we obtain
$$
\begin{aligned}
& \big\||Y_x|^{-\frac 12}\big[ \be(\bu^\ve) - (\be(\bu) +  \be_{y}(\bu_1))\big]\big \|^2_{L^2(\Omega_{W,T}; L^2(Y_x))} \!\! + \|\be(\bu^\ve) - \be(\bu)\|^2_{L^2(\Omega_{ML, T})}\\
& \leq \sup\limits_{\Omega_{W,T}}\!\! \big|\bbE^\ve(x, b^\ve)^{-1}\big| \Big(\big\| |Y_x|^{-\frac 12}\big[\bbE^\ve(x, b^\ve)^{\frac 12} \be(\bu^\ve) - B(x,y, b)^{\frac 12} (\be(\bu) +  \be_{y}(\bu_1)) \big] \big\|^2_{L^2(\Omega_{W,T}; L^2(Y_x))}\\
& \qquad   + \big\| |Y_x|^{-\frac 12} \big(\bbE^\ve (x, b^\ve)^{\frac 12}  - B(x, y, b)^{\frac 12} \big)(\be(\bu) +  \be_{y}(\bu_1))\big \|^2_{L^2(\Omega_{W,T}; L^2(Y_x))} \Big)
\\
& \qquad + \sup\limits_{\Omega_{ML, T}}\!\!\big| E_{M_l}(b^\ve)^{-1}\big| \Big( \big\|E_{M_l}(b^\ve)^{\frac 12}\be(\bu^\ve) - E_{M_l}(b)^{\frac 12}  \be(\bu)\big\|^2_{L^2(\Omega_{ML, T})}\\
& \qquad \qquad \qquad  \qquad \quad + \big\|\big(E_{M_l}(b^\ve)^{\frac 12} -E_{M_l} (b)^{\frac 12}\big) \be(\bu)\big\|^2_{L^2(\Omega_{ML, T})}\Big) \\
& \leq C \Big(\big \| |Y_x|^{-\frac 12} \big[\bbE^\ve(x, b^\ve)^{\frac 12} \be(\bu^\ve) - B(x,y, b)^{\frac 12} (\be(\bu) +  \be_{y}(\bu_1))\big] \big\|^2_{L^2(\Omega_{W,T}; L^2(Y_x))} \\
& \qquad  \qquad  \qquad +  \big\|E_{M_l}(b^\ve)^{\frac 12}\be(\bu^\ve) - E_{M_l}(b)^{\frac 12}  \be(\bu)\big\|^2_{L^2(\Omega_{ML, T})} \Big) + \kappa(\ve),
\end{aligned}
$$
where $\kappa(\ve) \to 0 $ as $\ve \to 0$.
 Then, using \eqref{strong_ts_1},   we obtain   the strong l-p two-scale convergence of $\be(\bu^\ve)$ in $(0,T)\times \Omega_{W}$ and strong convergence  in $L^2((0,T)\times \Omega_{ML})$.
 The properties of the l-p unfolding operator, the weak convergence of $\cT_\mL^\ve(\be(\bu^\ve))$, and the strong l-p two-scale convergence of $\be(\bu^\ve)$, give
 \begin{equation}
 \begin{aligned}
 &  0 \leq \| \cT_\mL^\ve(\be(\bu^\ve)) - \be(\bu) -  \be_y(\bu_1(\cdot, \cdot, D_x \cdot))\|^2_{L^2(\Omega_{W,T}\times Y)}
\\
& \leq  |Y|\big\| \be(\bu^\ve)\big\|^2_{L^2(\Omega_{W,T})}  -2\big \langle \cT_\mL^\ve(\be(\bu^\ve)),  \be(\bu) + \be_y(\bu_1(\cdot, \cdot, D_x \cdot)) \big\rangle_{\Omega_{W,T},  Y}\\
& \qquad \qquad \qquad \qquad + |Y| \, \big \| |Y_x|^{-\frac 12}(\be(\bu) + \be_y(\bu_1))\big\|^2_{L^2(\Omega_{W,T}; L^2(Y_x))} \to 0,
  \end{aligned}
 \end{equation}
 as $\ve \to 0$, and, hence, the strong convergence of $\cT_\mL^\ve(\be(\bu^\ve))$ in $L^2((0,T)\times \Omega_{W}\times  Y)$.
\end{proof}

As next we derive the macroscopic equations for the systems of  reaction-diffusion  and ordinary differential equations~ \eqref{main_2}.
\begin{theorem}
A sequence  of solutions of the microscopic problem  \eqref{main_2}-\eqref{BC} converges to a solution $p_\alpha, c, b \in L^2(0,T; H^1(\Omega))$, with $\partial_t p_\alpha, \partial_t c, \partial_t b \in L^2(0,T; H^1(\Omega)^\prime)$,   $b_s \in H^1(0,T; L^2(\Omega_W; L^2(Y_x)))$, and $b_s \in H^1(0,T; L^2(\Omega_{ML}))$  of the macroscopic equations
  \begin{eqnarray}\label{macro_macro}
  \begin{aligned}
   & \theta(x) \partial_t p_E - {\rm div} (\mathcal D_E(x) \nabla p_E) = 0, \\
 & \theta(x) \partial_t  p_e - {\rm div} (\mathcal D_e(x) \nabla p_e) =  - \theta(x)  R_e(p_e, p_E),   \\
 & \theta(x) \partial_t p_d - {\rm div} (\mathcal D_d(x) \nabla p_d) = \; \theta(x) R_e(p_e, p_E) \\
& \hspace{ 3.5 cm}  + 2 \theta(x) \big[\mathcal R_{s}(b, b_s, \be(\bu)) - R_d(p_d, c)\big], \\
&\theta(x)  \partial_t c - {\rm div} ( \mathcal D_c(x) \nabla c) =  \theta(x)\big[\mathcal R_{s}(b, b_s, \be(\bu))-  R_d(p_d, c)\big] ,  \\
&\theta(x) \partial_t b -  {\rm div} (\mathcal D_b(x) \nabla b) =  \theta(x)\big[ R_d(p_d, c) - \mathcal R_{b}(b, \be(\bu))  -  d_b  b\big] ,
 \end{aligned}
 \end{eqnarray}
in~$(0,T)\times \Omega$  and  the two-scale equation
 \begin{equation}\label{macro_bs}
 \begin{aligned}
 \partial_t b_s =& R_b(b, \mathbb W(t,x,y) \be(\bu)) 
 - R_s(b, b_s, \mathbb W(t,x,y) \be(\bu)) - d_s b_s  && \text{for } (t,x) \in \Omega_{W, T}, y\in  Y_x^\ast, \\
 \partial_t b_s =& R_b(b,  \be(\bu)) - R_s(b, b_s,  \be(\bu)) - d_s b_s && \text{for  } (t,x) \in  \Omega_{ML, T},
 \end{aligned}
\end{equation}
 with the boundary conditions
\begin{eqnarray}\label{macro_macro_BC}
\begin{aligned}
& \mathcal D_\alpha(x) \nabla p_\alpha \cdot \nu =  J_\alpha(p_e, p_d, b) - \gamma_\alpha p_\alpha && \text{ on } (0,T)\times \Gamma_\cI,\\
& \mathcal D_c(x) \nabla c \cdot \nu = J_c(\bu) - \gamma_c c  &&  \text{ on } (0,T)\times \Gamma_\cI,\\
& \mathcal D_\alpha(x) \nabla p_\alpha \cdot \nu =   0, \qquad  \mathcal D_c(x) \nabla c \cdot \nu =   0   && \text{ on } (0,T)\times \Gamma_\cE,  \\
& \mathcal D_b(x) \nabla b \cdot \nu =   0   &&\text{ on } (0,T)\times \partial \Omega,
\end{aligned}
\end{eqnarray}
for  $\alpha=e,E,d$, where $J_d(p_e,p_d, b) \equiv 0$,
and   the initial conditions
\begin{equation}\label{init_macro}
\begin{aligned}
&p_\alpha(0) = p_{\alpha,0},\;   c(0) = c_0,\;  b(0)=b_0  && \text{in } \Omega, \;\; \text{ for } \alpha = e,E,d, \\
&b_s(0) = b_{s,0}  &&\text{in } \Omega_W\times  Y_x^\ast \, \text{ and }\, \Omega_{ML}.
\end{aligned}
\end{equation}
 Here $\theta(x) = |Y^\ast_{x}|/|Y_x|$ for $x \in \Omega_W$,  $\theta(x) = 1$ for $x \in \Omega_{ML}$, where  $ Y_x = D_x Y$ and   $Y_{x}^\ast= Y_x \setminus  R_x\overline Y_0$, with $D_x= R_x W(x)$, $R_x = R(\gamma(x_3))$. The reaction terms are defined  as
 $$
 \begin{aligned}
 \mathcal R_{b}(b, \be(\bu))& = \ddashinttt_{Y_x^\ast}\!\!  R_b(b, \mathbb W(t,x,y) \be(\bu))  dy &&\text{ for } (t,x) \in  \Omega_{W, T}, \\
 \mathcal R_{s} (b, b_s, \be(\bu))&=\ddashinttt_{ Y_x^\ast}\!\! R_s(b, b_s, \mathbb W(t,x,y) \be(\bu))  dy \;\; &&\text{ for } (t,x) \in  \Omega_{W, T}, \\
 \mathcal R_{b}(b, \be(\bu)) & = r_b(b) Q_b(b,  \be(\bu))  &&   \text{ for } (t,x) \in \Omega_{ML, T}, \\
 \mathcal R_{s} (b, b_s, \be(\bu))&=r_s(b_s)Q_s(b,  \be(\bu)) &&   \text{ for } (t,x) \in \Omega_{ML, T},
\end{aligned}
$$
where
$$
\begin{aligned}
R_b(b, \mathbb W(t,x,y) \be(\bu)) & = r_b(b) Q_b(b, \mathbb W(t,x,y) \be(\bu)), \\
R_s(b, b_s, \mathbb W(t,x,y) \be(\bu)) & = r_s(b_s)Q_s(b, \mathbb W(t,x,y) \be(\bu)),
\end{aligned}
$$ with
$\mathbb W_{ijkl}(t,x,y) = \delta_{ik}\delta_{jl} + \big( \be_y({\tilde \omega}_{ij} (t,x,R^{-1}_{x}y)) \big)_{kl}$ and   ${\tilde \omega}_{ij}$ are  solutions of~\eqref{unit_loc_per}. The macroscopic diffusion coefficients $\mathcal D_\alpha(x)$ are given by
 $$
 \begin{aligned}
 \mathcal D_{\alpha, ij}(x) &= D_\alpha\frac 1{|Y_x|}\!\!\int_{Y^\ast_{x}}\!\! \Big[ \delta_{ij} +  \partial_{y_i} w^j(x,y) \Big] dy \;   &&\text{ for }  x\in \Omega_W, \\
 \mathcal D_\alpha(x) &= D_\alpha {\bf I} \;  &&\text{ for } x \in \Omega_{ML},
 \end{aligned}
 $$
 for   $\alpha=e, E, d, c, b$, where $w^j$, for $j=1,2,3$, are solutions of the  cell problems
 \begin{eqnarray}\label{unit_cell}
 \begin{aligned}
 \nabla_y \cdot (\nabla_y w^j + l_j)& = 0 \quad && \text{ in }   Y^\ast_{x}, \quad \\
  (\nabla_y w^j + l_j)\cdot \nu &=0 \quad && \text{ on } \Gamma_x, \quad w^j \; \; Y_x-\text{periodic},
 \end{aligned}
 \end{eqnarray}
for   the canonical basis  $\{l_j\}_{1\leq j \leq 3}$  in $\mathbb R^3$ and $ \Gamma_x = R_x \Gamma$.
\end{theorem}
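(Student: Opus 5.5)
We pass to the limit in the weak formulation \eqref{weak_sol_n1}--\eqref{weak_sol_bs}, using the convergences of Lemma~\ref{converg_2} together with the strong convergences \eqref{strong_conv_u} established in the previous theorem. The key point is the strong convergence of the unfolded stretched cross-links.

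\textbf{Step 1: limits in the linear diffusion terms.} For the reaction-diffusion part we use test functions $\phi(t,x)=\phi_1(t,x)+\ve(\mathcal L^\ve_\rho\phi_2)(t,x)$, with $\phi_1\in C^1_0(0,T;C^1(\overline\Omega))$ and $\phi_2\in C^1_0(\Omega_{W,T};C^1_{\rm per}(Y_x))$. We multiply by $\chi_{\Omega^\ve_M}$, whose l-p approximation converges strongly l-p two-scale to $\chi_{Y^\ast_x}$ by the locally periodic approximation estimate \eqref{estim_non_local_1}. The error terms involving $\phi_{\Omega_n^\ve}-\chi_{\Omega_n^\ve}$ and $\nabla\phi_{\Omega_n^\ve}$ are handled exactly as in the elasticity proof via \eqref{ApproxCharactF}.

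The time-derivative term is moved onto the test function by integrating by parts in time. The $L^\infty$ bounds and strong $L^2$ convergence of $p^\ve_\alpha,c^\ve,b^\ve$ then give the factor $\theta(x)$. The gradient terms converge to $\langle D_\alpha(\nabla p_\alpha+\nabla_y p_{\alpha,1}),\nabla\phi_1+\nabla_y\phi_2\rangle_{Y^\ast_x}$. Choosing $\phi_1=0$ yields the cell problem \eqref{unit_cell} posed in $Y^\ast_x$, with the representation $p_{\alpha,1}=\sum_j\partial_{x_j}p_\alpha w^j$, and hence the tensors $\mathcal D_\alpha$.

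The boundary terms pass to the limit by the strong convergence in $L^2((0,T)\times\partial\Omega)$ from Lemma~\ref{converg_2}, the continuity of $J_\alpha$, and the Lipschitz continuity of $J_c$. The terms $R_e$ and $R_d$ converge by strong convergence and local Lipschitz continuity on the bounded range.

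\textbf{Step 2: strong convergence of $b_s^\ve$ (the main obstacle).} The ODE has no spatial regularity, so weak convergence of $b^\ve_s$ is insufficient to pass to the limit in $r_s(b_s)Q_s$. We apply $\cT^\ve_\mL$ to the extended ODE in $\Omega_W$, which gives
\[
\partial_t\cT^\ve_\mL(b^\ve_s)=r_b(\cT^\ve_\mL b^\ve)Q_b(\cT^\ve_\mL b^\ve,\cT^\ve_\mL\be(\bu^\ve))-r_s(\cT^\ve_\mL b^\ve_s)Q_s(\cT^\ve_\mL b^\ve,\cT^\ve_\mL\be(\bu^\ve))-d_s\cT^\ve_\mL b^\ve_s .
\]
We compare two members $\ve,\ve'$ of the sequence and test the difference with $\cT^\ve_\mL b^\ve_s-\cT^{\ve'}_\mL b^{\ve'}_s$. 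The monotonicity of $r_s$ together with $Q_s\ge0$ controls the term containing $r_s$ of the difference. The remaining terms are bounded, via the Lipschitz bounds in Assumption~\ref{assumptions}.4 and the $L^\infty$ bounds on $b^\ve$ and $b^\ve_s$, by $\|\cT^\ve_\mL b^\ve-\cT^{\ve'}_\mL b^{\ve'}\|_{L^2}$ and $\|\cT^\ve_\mL\be(\bu^\ve)-\cT^{\ve'}_\mL\be(\bu^{\ve'})\|_{L^2}$. The first tends to zero by the strong convergence of $b^\ve$; the second by \eqref{strong_conv_u}. Gronwall then gives the Cauchy property, so $\cT^\ve_\mL(b^\ve_s)\to b_s(\cdot,D_x\cdot)$ strongly in $L^2(\Omega_{W,T}\times Y)$.

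The same argument without unfolding applies in $\Omega_{ML}$, using the strong convergence of $\be(\bu^\ve)$ there. Passing to the limit in the unfolded ODE, with $\be(\bu)+\be_y(\bu_1)=\mathbb W\be(\bu)$ by \eqref{eq:u1}, gives \eqref{macro_bs} restricted to $Y^\ast_x$.

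\textbf{Step 3: nonlinear source terms and initial data.} For the nonlinear reaction terms $\langle R_s(b^\ve,b^\ve_s,\be(\bu^\ve)),\phi\rangle_{\Omega^\ve_{M,T}}$ we unfold, restrict to $Y^\ast$, and use the three strong convergences above. The growth bounds $|Q_s|\le\beta_s(1+|\bA|)$ together with $L^2$-strong convergence of the argument justify the limit, which yields $\theta\,\mathcal R_s$ and $\theta\,\mathcal R_b$. The boundary layer $\Lambda^\ve$ and the strip $\mathcal K^\ve$ contribute $O(\ve^r)$ and vanish in the limit.

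Initial conditions follow in the standard way from the time-integrated formulation. The regularity $\partial_tb_s\in L^2$ follows from the bound on $\partial_tb^\ve_s$ in Lemma~\ref{estim:apriori}, which passes to the limit under unfolding.
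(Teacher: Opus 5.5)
Your proposal is correct and follows essentially the same route as the paper. It uses a Cauchy/Gr\"onwall argument for the unfolded extension $\cT^\ve_\mL(b^\ve_s)$, based on the monotonicity of $r_s$, $Q_s\ge 0$, the $L^\infty$ bound on $b^\ve_s$, and the strong convergences of $b^\ve$ and $\cT^\ve_\mL(\be(\bu^\ve))$, and then passes to the limit with test functions $\phi_1+\ve\mathcal L^\ve_\rho\phi_2$ after replacing $\chi_{\Omega^\ve_M}$ by its locally periodic approximation via \eqref{estim_non_local_1}. The only step you omit is uniqueness of the limit problem, which the paper obtains by a contraction estimate analogous to Theorem~\ref{th:existence_micro}; it is needed to pass from subsequence convergence (Lemma~\ref{converg_2}) to convergence of the whole sequence.
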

\begin{proof}
 In order to pass to the limit in the nonlinear reaction functions, we  prove the strong  convergence of~$\T_\mL^{\ve} (b^{\ve}_s)$ in $L^2((0,T)\times \Omega_W\times Y)$ and of $b^\ve_s$ in $L^2((0,T)\times \Omega_{ML})$,  by showing the Cauchy property. Considering~\eqref{weak_sol_bs} for $b_s^{\ve_n}$ and $b_s^{\ve_m}$,   using an extension of $b_s^\ve$ from $\Omega^\ve_M$ into $\Omega$, and   applying the l-p unfolding operator, we obtain
\begin{equation}\label{conv:strong1}
 \begin{aligned}
 &   \int_0^\tau \!\!\!\big\|[\cT_\mL^{\ve_n}(Q_s(b^{\ve_n}, \be(\bu^{\ve_n})))]^{\frac 12} \big|\cT_\mL^{\ve_n}(b^{\ve_n}_s) -\cT_\mL^{\ve_m}( b^{\ve_m}_s) \big|\big\|^2_2 d\tau\\
&  + \|\cT_\mL^{\ve_n}(b^{\ve_n}_s)(\tau) - \cT_\mL^{\ve_m}(b_s^{\ve_m})(\tau) \|^2_2
\leq  C_1 \!\!\int_0^\tau \!\! \Big[ \|\cT_\mL^{\ve_n}(b^{\ve_n}) -\cT_\mL^{\ve_m}(b^{\ve_m}) \|^2_2
\\
& \qquad +
  \|\T_\mL^{\ve_n} (b^{\ve_n}_s) - \T_\mL^{\ve_m} (b^{\ve_m}_s)\|^2_2
  + \|\cT_\mL^{\ve_n}(\be(\bu^{\ve_n})) - \cT_\mL^{\ve_n}(\be(\bu^{\ve_m})) \|^2_2 \Big] dt \\
&   +
 C_2\!\! \int_0^\tau \!\!\!\int_{\Omega_W\times  Y}\!\!|\T_\mL^{\ve_n} (b^{\ve_n}) - \T_\mL^{\ve_m} (b^{\ve_m})|
  |\T_\mL^{\ve_n} (\be(\bu^{\ve_n}))| \times \\
&\qquad \qquad \qquad   \times   |\T_\mL^{\ve_n} (b_s^{\ve_n}) - \T_\mL^{\ve_m} (b_s^{\ve_m})|dydx dt  =  \int_0^\tau \!\! \big[I_1(t)+ I_2(t)\big] dt,
 \end{aligned}
\end{equation}
for $\tau \in (0, T]$, where $\|v\|_2 = \|v\|_{L^2(\Omega_W\times   Y)}$.
The boundedness of $b^\ve_s$ and the estimates for $\bu^\ve$, yields
$$
\begin{aligned}
 I_2(t) \leq  \| \T_\mL^{\ve_n} (\be(\bu^{\ve_n}))\|_{2} \|\T_\mL^{\ve_n} (b^{\ve_n}) - \T_\mL^{\ve_m} (b^{\ve_m})\|_{4} \|\T_\mL^{\ve_n} (b_s^{\ve_n}) - \T_\mL^{\ve_m} (b_s^{\ve_m})\|_{4} \\
 \leq C_1  \|\T_\mL^{\ve_n} (b^{\ve_n}) - \T_\mL^{\ve_m} (b^{\ve_m})\|^{\frac 43}_{4}
 + C_2\|\T_\mL^{\ve_n} (b_s^{\ve_n}) - \T_\mL^{\ve_m} (b_s^{\ve_m})\|^2_{2}.
\end{aligned}
$$
Using in~\eqref{conv:strong1} the Gr\"onwall inequality, together with  the   strong convergence of $b^\ve$ and  $\T_\mL^{\ve} (\be(\bu^{\ve}))$,  we obtain
\begin{equation}
 \|\T_\mL^{\ve_n} (b^{\ve_n}_s(\tau)) - \T_\mL^{\ve_m}(b_s^{\ve_m}(\tau)) \|^2_{L^2(\Omega_W\times  Y)} \leq C \sigma(n,m), \quad \text{ for } \; \tau \in (0,T],
\end{equation}
 where $\sigma(n,m) \to 0$ as $n,m \to \infty$, and  hence the strong convergence of $\T_\mL^{\ve} (b^{\ve}_s)$ in $L^2(\Omega_{W,T} \times  Y)$. Similar calculations, considering equations for $b_s^{\ve_n}$ and $b_s^{\ve_m}$ in $\Omega_{ML, T}$ and using the strong convergence of $b^\ve$ and $\be(\bu^\ve)$ in $L^2(\Omega_{ML, T})$, yield the strong convergence of $b_s^\ve$ in $L^2(\Omega_{ML, T})$.

To derive the macroscopic equations~\eqref{macro_macro}, we  first consider the equation for $p_d^\ve$ in~\eqref{weak_sol_n1} and rewrite it  as
 \begin{equation}\label{micro_pd_11}
 \begin{aligned}
  - \langle p^\ve_d, \partial_t \phi_d \chi_{\tilde \Omega_M^\ve}\rangle_{\Omega_T}  + \langle D_d \nabla p^\ve_d, \nabla \phi_d \chi_{\tilde \Omega_M^\ve}\rangle_{\Omega_T}  - \langle  R_{e}(p^\ve_e, p^\ve_E), \phi_d \chi_{\tilde \Omega_M^\ve} \rangle_{\Omega_T} \qquad \\
 \qquad \quad  + 2  \langle  R_{d}(p_d^\ve, c^\ve)- R_s(b^\ve, b_s^\ve, \be(\bu^\ve)) , \phi_d \chi_{\tilde \Omega_M^\ve} \rangle_{\Omega_T} +  \langle \gamma_d p_d^\ve, \phi_d \rangle_{\Gamma_{\cI,T}}
\\
 - \langle p^\ve_d, \partial_t \phi_d (\chi_{\Omega_M^\ve}- \chi_{\tilde \Omega_M^\ve})\rangle_{\Omega_T}
+ \langle D_d \nabla p^\ve_d, \nabla \phi_d (\chi_{ \Omega_M^\ve}- \chi_{\tilde \Omega_M^\ve})\rangle_{\Omega_T} \qquad \qquad \quad
\\
 + \langle 2[ R_{d}(p_d^\ve, c^\ve)-R_s(b^\ve, b_s^\ve, \be(\bu^\ve))] -  R_{e}(p^\ve_e, p^\ve_E), \phi_d (\chi_{\Omega_M^\ve}- \chi_{\tilde \Omega_M^\ve}) \rangle_{\Omega_T}=0,
 \end{aligned}
 \end{equation}
for $\phi_d \in C^1_0(0,T; C^1(\overline \Omega))$,  where
$\tilde \Omega_M^\ve = \Omega \setminus \big(\bigcup_{n=1}^{N_\ve} \hat \Omega_{0,n}^{\ve}\big)$  with $\hat \Omega_{0,n}^{\ve} = \bigcup_{\xi \in \hat \Xi_n^\ve} \ve R_{x_n^\ve}(\overline Y_0 + W_{x_n^\ve}\xi)$, for
$ \hat \Xi_n^\ve = \{ \xi \in \mathbb Z^3 : \ve  D_{x_n^\ve}(\overline Y + \xi) \subset \Omega_n^\ve \cap \Omega_W\}$. 
For  $\chi_{\Omega^\ve_M}-\chi_{\widetilde \Omega^\ve_M}$, using~\eqref{estim_non_local_1}, we obtain
$$
\begin{aligned}
& \|\chi_{\Omega^\ve_M} - \chi_{\widetilde \Omega^{\ve}_M}\|^2_{L^2(\Omega)}\\
 & \leq
 \!\sum_{n=1}^{N_\ve} \!\int_{\Omega_n^\ve} \! \sum_{j =1}^{I_n^\ve}   \big| \vartheta_\ve(R^{-1}_{x_{k_j}^\ve}(x-x_{k_j}^\ve))
-   \vartheta_\ve(R^{-1}_{x_n^\ve}(x- x_n^\ve)-  \ve W_{x_n^\ve} m_j)\big |^2 dx
\leq  C \ve^{3r-2} \!\!\to 0,
\end{aligned}
$$
as $\ve \to 0$,
for $r \in (2/3,1)$, where $x^\ve_n=R_{x^\ve_n} \ve \kappa_n$ and  $x^\ve_{k_{j}}= R_{x^\ve_{k_j}} \ve k_{j}$,  with $k_j=\kappa_n+ m_j$ and $m_j \in \mathbb Z^3$.

The local Lipschitz continuity of $Q_l$ and extension of $b^\ve$ from $\Omega_M^\ve$ into $\Omega$   ensure
\begin{equation*}
\begin{aligned}
& \|\cT^\ve_\mL(Q_l(b^\ve, \be(\bu^\ve)) )  -Q_l(b, \be(\bu) +  \be_y(\bu_1)) \|_{L^1(\Omega_{W,T} \times  Y)}
\\ & \quad  \leq  C\|\be(\bu^\ve)\|_{L^2(\Omega_{W,T})} \|\cT_\mL^\ve(b^\ve) - b\|_{L^2(\Omega_{W,T}\times  Y)}
 \\ & \qquad  +  C \|b\|_{L^2(\Omega_{W,T})} \|\cT_\mL^\ve(\be(\bu^\ve)) - \be(\bu) -  \be_y(\bu_1)\|_{L^2(\Omega_{W,T}\times Y)},
\end{aligned}
\end{equation*}
for $l=b,s$. Then  the   strong  convergence of $\cT_\mL^\ve(\be(\bu^\ve))$ and $\cT_\mL^\ve(b^\ve)$  implies
\begin{equation*}
\begin{aligned}
&\lim\limits_{\ve \to 0} \big \langle \cT_\mL^\ve(Q_l(b^\ve,  \be(\bu^\ve)) ),   \psi \big \rangle_{\Omega_{W,T},  Y} =
\big \langle  Q_l(b, \be(\bu) +  \be_y(\bu_1)),   \psi \big \rangle_{\Omega_{W,T},  Y},
 \end{aligned}
\end{equation*}
for all $\psi \in C_0(\Omega_{W,T} \times  Y)$, and we conclude, using a priori estimates for $b^\ve$ and $\bu^\ve$, that
$$
\cT_\mL^\ve(Q_l(b^\ve, \be(\bu^\ve))) \rightharpoonup Q_l(b, \be(\bu) +  \be_y(\bu_1)) \quad \text{ weakly  in } L^2(\Omega_{W,T}\times   Y), \quad \text{ for } \; l=b,s.
$$
Due to the relation between the l-p two-scale convergence  and  the weak convergence of the unfolded sequence~\cite{Ptashnyk_2013}, we have
\begin{equation*}\label{conv_Q}
Q_l(b^\ve, \be(\bu^\ve)) \rightharpoonup Q_l(b, \be(\bu) +  \be_y(\bu_1)) \qquad \text{l-p two-scale}, \quad \text{ for }  l=b,s.
\end{equation*}
Similarly
\begin{equation*}
\begin{aligned}
& \|\cT^\ve_\mL(R_s(b^\ve, b^\ve_s,\be(\bu^\ve)) )  -R_s(b, b_s, \be(\bu) + \be_y(\bu_1)) \|_{L^1(\Omega_{W,T}\times  Y)}
\\
&\leq  C_1\|\be(\bu^\ve)\|_{2}\big[\|b_s^\ve\|_{L^\infty} \|\cT_\mL^\ve(b^\ve) - b\|_2 + \|b\|_{L^\infty} \|\cT_\mL^\ve(b^\ve_s) - b_s\|_2\big]\\
 & \qquad +  C_2 \|b^\ve_s\|_{2}\|b\|_{L^\infty} \|\cT_\mL^\ve(\be(\bu^\ve)) - \be(\bu) -  \be_y(\bu_1)\|_2,
\end{aligned}
\end{equation*}
where $\|v\|_2 = \|v\|_{L^2(\Omega_{W,T}\times  Y)}$.
Thus, the   strong  convergence of $\cT_\mL^\ve(\be(\bu^\ve))$, $\cT^\ve_\mL(b^\ve)$, and $\cT_\mL^\ve(b^\ve_s)$ implies  the l-p two-scale convergence of $R_s(b^\ve, b^\ve_s,\be(\bu^\ve)) $ to $R_s(b, b_s,\be(\bu)+ \be(\bu_1))$.
The a priori estimates  and the strong convergence of $b^\ve$, $b_s^\ve$, and $\be(\bu^\ve)$ ensure   $R_b(b^\ve,\be(\bu^\ve)) \rightharpoonup  R_b(b,\be(\bu))$ and
$R_s(b^\ve, b^\ve_s,\be(\bu^\ve)) \rightharpoonup  R_s(b, b_s,\be(\bu))$ in~$L^2(\Omega_{ML, T})$.

Taking   $\phi_d^\ve(t,x) = \phi_d^1(t,x) + \ve \mathcal L_\rho^\ve(\phi_d^2)(t,x)$,  with
$\phi_d^1 \in C_0^1(0,T; C^1(\overline \Omega))$,  $\phi_d^2 \in C_0^1(\Omega_{W,T}; C^1_{\rm per}(Y_x))$, as a test function in~\eqref{micro_pd_11}, passing to   the l-p two-scale limit  and using the convergence results in~Lemma~\ref{converg_2} and the  l-p two-scale convergence of $R_s(b^\ve, b^\ve_s,\be(\bu^\ve)) $, we obtain
 \begin{equation}\label{macro_pd_11}
 \begin{aligned}
  - \big\langle |Y_x|^{-1}p_d, \partial_t \phi_d^1 \big\rangle_{\Omega_{W,T}, Y_x^\ast}
 \! + \big\langle | Y_x|^{-1} D_d (\nabla p_d + \nabla_y p_d^1), \nabla \phi_d^1 +\nabla_y \phi_d^2\big\rangle_{\Omega_{W,T},  Y_x^\ast}
  \\
  - \langle p_d, \partial_t \phi_d^1 \rangle_{\Omega_{ML,T}}
  + \langle  D_d \nabla p_d , \nabla \phi_d^1\rangle_{\Omega_{ML,T}}
     +  \langle \gamma_d p_d, \phi_d^1 \rangle_{\Gamma_{\cI,T}} \qquad
 \\
  +  \big\langle |Y_x|^{-1} \big[2R_{d}(p_d, c)- 2 R_s(b, b_s, \be(\bu)+\be(\bu_1)) -  R_{e}(p_e, p_E)\big] , \phi_d^1  \big\rangle_{\Omega_{W,T}, Y^\ast_x}\\
  +  \big\langle  2R_{d}(p_d, c)- 2 R_s(b, b_s, \be(\bu)) -  R_{e}(p_e, p_E), \phi_d^1  \big\rangle_{\Omega_{ML,T}}=0.
 \end{aligned}
 \end{equation}
Choosing  $\phi_\alpha^\ve(t,x) = \phi_\alpha^1(t,x) + \ve \mathcal L_\rho^\ve(\phi_\alpha^2)(t,x)$,  with
$\phi_\alpha^1 \in C_0^1(0,T; C^1(\overline \Omega))$,   $\phi_\alpha^2 \in C_0^1(\Omega_{W,T}; C^1_{\rm per}( Y_x))$,  for $\alpha=e, E,c,b$,
 as  test functions in  the equations for $p_E^\ve$, $p_e^\ve$, $c^\ve$, $b^\ve$ in~\eqref{weak_sol_n1},   integrating by parts in time,  and considering the l-p two-scale convergence in the same way as in~\eqref{micro_pd_11},     we derive the macroscopic equations~\eqref{macro_macro}.
 The strong convergence of $\bu^\ve$ in $L^2((0,T)\times \Gamma_\cI)$, see Lemma~\ref{converg_2}, ensures convergence in the boundary conditions for $c^\ve$ on~$(0,T)\times \Gamma_\cI$.
 Standard calculations, see e.g.~\cite{CDG_book}, considering first $\phi_\alpha^1=0$ and then $\phi_\alpha^2 =0$ in the limit equations, imply the  cell problems~\eqref{unit_cell} and the macroscopic diffusion coefficients $\mathcal D_\alpha$, for $\alpha = E,e,d,c,b$.
 To derive the boundary conditions~\eqref{macro_macro_BC} we consider $\phi_\alpha^1 \in C^1_0(0,T;  C^1(\overline \Omega))$, that is  zero  first at $\Gamma_\cI$ and then at $\Gamma_{\cE}$, for $\alpha = e, E, d, c,b$.
 Since $0<\theta_0 \leq \theta(x) \leq 1$ for  $x \in \Omega$, the standard arguments yield the regularity of the time derivatives $\partial_t p_\alpha, \partial_t c, \partial_t b \in L^2(0,T; H^1(\Omega)^\prime)$.
 Choosing  $\phi_\alpha^1 \in C^1([0,T]\times \overline \Omega)$, with $\phi_\alpha^1(T) =0$, for $\alpha = e, E, d, c,b$,  and integrating by parts in time imply the initial conditions~\eqref{init_macro}.
Testing~\eqref{weak_sol_bs} with
 $\phi^\ve(t,x) = \mathcal L^\ve(\phi_s^2)(t,x)$ for $(t,x) \in \Omega_{W,T}$ and $\phi^\ve(t,x) = \phi_s^1(t,x)$ for $(t,x) \in \Omega_{ML, T}$, where $\phi_s^2 \in C_0(\Omega_{W,T}; C_{\rm per}( Y_x))$ and $\phi_s^1 \in C_0(\Omega_{ML,T})$,
 and using the convergence of the reaction terms  we obtain the macroscopic equations~\eqref{macro_bs}. The time-regularity of $b_s$ implies that the associated  initial conditions  in~\eqref{init_macro} are satisfied.

 The uniqueness of a solution to the macroscopic model~\eqref{eq:macro_u}  and \eqref{macro_macro}-\eqref{init_macro} is shown by contradiction. Assuming that there are two solutions and performing calculations similar to those  in Theorem~\ref{th:existence_micro}  we derive the contraction inequality, which then  implies the uniqueness of the solution.
%
%
\end{proof}

\section{Conclusion}\label{conclusion}
This work presents the derivation and multiscale analysis of a new model for plant tissue biomechanics. In
comparison to previous results~\cite{PS_1_2016, PS_2_2016, PS_2017}, the main novelty lies in  a more detailed modelling of  the dynamics of calcium-pectin cross-links, where we distinguish between the stretching of newly created  and the breakage of stretched cross-links. Another novel aspect is the analysis  of non-periodic microscopic structure of plant cell walls, characterised by rotated planes of parallel-aligned microfibrils. The derived microscopic model consists of a strongly coupled system of reaction-diffusion  and  ordinary differential equations, with reaction terms depending on the displacement gradient,
coupled with the equations of linear elasticity, whose elastic moduli depend on the density of calcium–pectin cross-links.
The main challenge and novelty of the analysis include  the derivation of  a priori estimates  and the corresponding contraction inequalities in $L^\infty$ in the  proof of existence of solutions to the microscopic model, as well as the derivation of the equicontinuity property with respect to the time variable for the displacement gradient. Since the dynamics of stretched cross-links is modelled by an ordinary differential equation
with reaction terms depending on the deformation gradient, new ideas were applied in the derivation of a priori estimates, compared to the previous work~\cite{PS_1_2016}.

For the multiscale analysis and the derivation of the macroscopic model, the non-periodic microscopic structure of plant cell walls was approximated by a locally periodic microstructure and the methods of l-p two-scale convergence and the l-p unfolding operator were applied. In order to pass to the limit in the nonlinear reaction terms,  strong l-p two-scale convergence for the displacement gradient and for the density of stretched cross-links was shown. To show the strong l-p two-scale convergence of the displacement gradient a standard approach, that consists of   considering the microscopic and  macroscopic equations and  showing the convergence of the associated norms, was applied.
To prove the strong l-p two-scale convergence of the density of  stretched cross-links, which satisfies an ordinary differential equation defined in a perforated domain,  an extension argument  and the l-p unfolding operator are used. This approach allows us to  establish the Cauchy property for the corresponding unfolded sequence, which then implies   strong  l-p two-scale convergence of the original sequence.

The multiscale modelling and analysis of plant tissue biomechanics considered in this work provide a framework for the detailed modelling of interactions between mechanical properties, microscopic structure, and molecular processes in biological tissues. They also enable efficient numerical simulations of the underlying processes by means of the corresponding macroscopic model, which provides  a rigorous connection between microscopic properties at the cellular level  and macroscopic behaviour at the tissue level. Accounting for the non-periodic microstructure of plant cell walls allows us to analyse the impact of  a rotated distribution of microfibrils on mechanical properties and deformation of biological tissues~\cite{Anderson, McCulloch, PS_2_2016}.

\section*{Acknowledgments}
MP would like to thank Erwin Schr\"odinger International Institute for Mathematics and Physics (ESI)  for support and hospitality during the Thematics programme ``Free Boundary Problems'' when work on this paper was undertaken.

\section*{Appendix}
The   elasticity tensor $\bbE_F$ for transversal isotropic microfibres  is determined by the Young's modulus $E_F$ associated with the directions  perpendicular to the microfibril, the Poisson's ratio $\nu_{F1}$ characterizing the transverse reduction of the plane perpendicular to the microfibril for stress lying in this plane, the ratio $n_F$ between $E_F$ and the Young's modulus associated with the direction of the axis of the microfibril, the Poisson's ratio $\nu_{F2}$ governing the reduction in the plane perpendicular to the microfibril for stress in the direction of the microfibril, and the shear modulus $Z_F$ for planes parallel to the microfibril.
In the  Voigt notation $\bbE_F$ can be written as
$$
\bbE_F = \begin{pmatrix}
\alpha_2+\alpha_5 & \alpha_2-\alpha_5 & \alpha_3 & 0 & 0 & 0\\
\alpha_2-\alpha_5 & \alpha_2+\alpha_5 & \alpha_3 & 0 & 0 & 0\\
\alpha_3 & \alpha_3 & \alpha_1 & 0 & 0 & 0\\
0 & 0 & 0 & \alpha_4 & 0 & 0\\
0 & 0 & 0 & 0 & \alpha_4 & 0\\
0 & 0 & 0 & 0 & 0 & \alpha_5
\end{pmatrix},
$$
where $\alpha_i$, for $i=1,2,3,4,5$, are related to the five parameters  through
\begin{align*}
\alpha_1&=\frac{E_F(1-\nu_{F1})}{n_F(1-\nu_{F1})-2\nu_{F2}^2}, &
\alpha_2&=\frac{E_Fn_F}{2n_F(1-\nu_{F1})-4\nu_{F2}^2},\\
\alpha_3&=\frac{E_F\nu_{F2}}{n_F(1-\nu_{F1})-2\nu_{F2}^2},&
\alpha_4&=Z_F,&
\alpha_5&=\frac{E_F}{2(1+\nu_{F1})}.
\end{align*}

Zsivanovits et al.~\cite{ZMSR} measured the Young's modulus of a pectin network made from mixing de-estrified apple pectin and various concentrations of calcium.  In particular, they found that for calcium concentrations of $10$ mM and $30$ mM, the network has a Young's modulus of $15$ MPa and $33$ MPa, respectively.
Assuming that all of the calcium is converted into cross-links, $E$ can be given as a function of the concentration of cross-links
$
E(b) = 0.9\, b\,  \text{MPa}/\text{mM} + 10.5\, \text{MPa}.
$
\end{document}